\def\longrightharpoonup{\relbar\joinrel\rightharpoonup}
\def\longleftharpoondown{\leftharpoondown\joinrel\relbar}
\def\longrightleftharpoons{
  \mathop{
    \vcenter{
       \hbox{
       \ooalign{
          \raise1pt\hbox{$\longrightharpoonup\joinrel$}\crcr
  	  \lower1pt\hbox{$\longleftharpoondown\joinrel$}
	}
      }
    }
  }
}
\newtheorem{remark}{Remark}
\newtheorem{prop}{Proposition}
\newtheorem{thm}{Theorem}
\newtheorem{lemma}{Lemma}
\newtheorem{corollary}{Corollary}
\newtheorem{assump}{Assumption}
\DeclareMathOperator*{\argmin}{arg\,min}
\providecommand{\keywords}[1]{\textbf{\small Keywords } #1}
\begin{document}
\title{Ergodic SDEs on submanifolds and related numerical sampling schemes}

\date{}
\author{
  Wei Zhang\,\textsuperscript{1} 
}
\footnotetext[1]{Zuse Institute Berlin, Takustrasse 7, 14195 Berlin,
Germany.\quad Email: wei.zhang@fu-berlin.de}

\maketitle
\begin{abstract}
  In many applications, it is often necessary to 
   sample the mean value of certain quantity with respect to a probability
   measure $\mu$ on the level set of a smooth function $\xi:\mathbb{R}^d
   \rightarrow \mathbb{R}^k$, $1 \le k < d$.
  A specially interesting case is the so-called conditional probability measure, which is
  useful in the study of free energy calculation and model reduction of diffusion processes. 
  By Birkhoff's ergodic theorem, one approach to estimate the mean value is to
  compute the time average along an infinitely long trajectory of an ergodic diffusion process on the level set whose invariant
  measure is $\mu$. 
Motivated by the previous work of~\citet*{projection_diffusion}, as well as
the work of~\citet*{Tony-constrained-langevin2012}, in this paper we construct a family of ergodic diffusion processes on 
  the level set of $\xi$ whose invariant measures coincide with the given one.
  For the conditional measure,  we propose a consistent numerical scheme which samples the conditional measure asymptotically.
  The numerical scheme doesn't require computing the second derivatives of $\xi$ and the
  error estimates of its long time sampling efficiency are obtained.
\end{abstract}
\keywords{ergodic diffusion process, reaction coordinate, level set, conditional probability measure}

\section{Introduction}
\label{sec-intro}
Many stochastic dynamical systems in real-world applications in
physics, chemistry, and biology often involve a large number of
degrees of freedom which evolve on vastly different time scales. Understanding
the behavior of these systems can be highly challenging due to the
high dimensionality and the existence of multiple time scales. 
To tackle these difficulties, the terminology \textit{reaction coordinate}, or
\textit{collective variable}, is often introduced
to help describe the essential dynamical behavior of complex systems~\cite{givon2004emd,mimick_sde,effective_dynamics,Majda2008_perspective,pavliotis2008_multiscale}.

In various research topics, in particular those related to molecular dynamics, 
one often encounters the problem of computing the mean value of certain quantity on the level set 
\begin{align}
  \Sigma=\xi^{-1}(\bm{0})=\Big\{ x \in \mathbb{R}^d ~\Big|~ \xi(x) = \bm{0} \in \mathbb{R}^k\Big\} 
\label{levelset-sigma-intro}
\end{align}
of a reaction coordinate function $\xi : \mathbb{R}^d \rightarrow \mathbb{R}^k$, $1 \le k <d$.  
Among different probability measures on $\Sigma$, the one defined by
\begin{align}
d\mu_1 = \frac{1}{Z} e^{-\beta U} \big[\mbox{det}(\nabla\xi^T
\nabla\xi)\big]^{-\frac{1}{2}} d\nu
\label{mu1-intro}
\end{align}
is especially relevant in applications and is sometimes called the conditional probability measure
on $\Sigma$. In (\ref{mu1-intro}), the parameter $\beta > 0$, $U: \mathbb{R}^d \rightarrow
\mathbb{R}$ is a smooth function, $Z$ is the normalization constant, 
$\nabla\xi$ denotes the $d \times k$ Jacobian matrix of the map $\xi$, 
and $\nu$ is the surface measure on $\Sigma$ induced from the Lebesgue measure on $\mathbb{R}^d$.  
The probability measure $\mu_1$ has a probabilistic interpretation, and the numerical computation of the mean value 
\begin{align}
  \overline{f} = \int_\Sigma f(x)\, d\mu_1(x)
  \label{mean-f-intro}
\end{align}
for a function $f$ on the level set is involved in various contexts, such as 
free energy calculations based on the thermodynamics integration
formula~\cite{lelievre2010free,Tony-constrained-langevin2012,non-equilibrium-2018}.

Applying Birkhoff's ergodic
theorem, the mean value $\overline{f}$ can be approximated by the time average 
$\frac{1}{T} \int_0^T f(X_s) ds$
along a long trajectory of the process $X_s$ which evolves on the level set
$\Sigma$ and has the invariant measure $\mu_1$. 
For this purpose, it is helpful to construct a diffusion process on the level set
with the correct invariant measure $\mu_1$, i.e., to write down
the stochastic differential equation (SDE) of $X_s$ in $\mathbb{R}^d$. 
While finding such a SDE is trivial in the linear reaction
coordinate case~\cite{vanden-eijnden2003}, it is not obvious when the reaction
coordinate $\xi$ is a nonlinear function of system's state. 

In the literature, the problem finding SDEs on the level set of the reaction coordinate
function with a given invariant measure has been considered in the 
study of free energy calculations~\cite{blue-moon,lelievre2010free,projection_diffusion,Tony-constrained-langevin2012}. Given a smooth function
$U : \mathbb{R}^d \rightarrow \mathbb{R}$, 
the authors in~\cite{projection_diffusion} constructed a diffusion process
$Y_s$ on $\Sigma$ whose unique invariant measure is $\mu_2$, given by 
\begin{align}
d\mu_2 = \frac{1}{Z} e^{-\beta U} d\nu\,.
\label{mu2-intro}
\end{align}
It is also shown in~\cite{projection_diffusion} that this process $Y_s$ can be obtained by projecting 
the dynamics 
\begin{align}
  d \widetilde{Y}_s & = -\nabla U (\widetilde{Y}_s)\, ds +
  \sqrt{2\beta^{-1}} dW_s 
\label{dynamics-id}
\end{align}
from $\mathbb{R}^d$ onto the level set $\Sigma$, where
 $W_s = (W_s^1, \cdots, W_s^d)^T$ is a $d$-dimensional Brownian motion.
 The dynamics $Y_s$ can be used to sample $\mu_2$, and therefore to sample the
 conditional measure $\mu_1$ in (\ref{mu1-intro}) as well, by either modifying
 the potential $U$ or reweighting the
function $f$ according to the factor $\big[\mbox{det}(\nabla\xi^T \nabla\xi)\big]^{-\frac{1}{2}}$.
In a more recent work~\cite{Tony-constrained-langevin2012}, 
the authors studied the constrained Langevin dynamics, which evolves on the submanifold of the entire phase space 
including both position and momentum.
It is shown in~\cite{Tony-constrained-langevin2012} that 
 the position components of the constrained
Langevin dynamics has the marginal invariant measure which coincides with $\mu_2$.
Therefore, it can also be used to compute the average $\overline{f}$ with
respect to the conditional measure $\mu_1$ (by either modifying the potential
or reweighting $f$ according to 
$\big[\mbox{det}(\nabla\xi^T \nabla\xi)\big]^{-\frac{1}{2}}$).
Detailed studies on the numerical schemes as well as applications of the constrained Langevin dynamics have been carried out in~\cite{Tony-constrained-langevin2012}. 

The same conditional probability measure $\mu_1$ in (\ref{mu1-intro}), as well as 
the average $\overline{f}$ in (\ref{mean-f-intro}), also plays an important role
in the study of the effective dynamics of diffusion processes
\cite{kevrekidis2003,froyland2014,effective_dynamics,effective_dyn_2017}.
As a generalization of the dynamics (\ref{dynamics-id}), 
 the diffusion process 
\begin{equation}\label{dynamics-v}
\begin{aligned}
  d \widetilde{Y}^i_s & = -\Big(a_{ij} \frac{\partial U}{\partial x_j}\Big)
  (\widetilde{Y}_s)\,ds +
  \frac{1}{\beta} \frac{\partial a_{ij}}{\partial x_j}(\widetilde{Y}_s)\, ds +
  \sqrt{2\beta^{-1}} \sigma_{ij}(\widetilde{Y}_s)\, dW^j_s\,, \quad 1 \le i \le d\,,
\end{aligned}
\end{equation}
and its effective dynamics have been considered in~\cite{effective_dyn_2017}, where the matrix-valued coefficients $\sigma, a : \mathbb{R}^d
\rightarrow \mathbb{R}^{d\times d}$ are related by $a = \sigma\sigma^T$, such
that $a$ is uniformly positive definite. Notice that, 
(\ref{dynamics-v}) is written in component-wise form with Einstein's summation
convention (the same Einstein's summation convention will be used throughout this paper, whenever no ambiguity will arise), and it reduces to (\ref{dynamics-id}) when $\sigma=a = \mbox{id}$.
The infinitesimal generator of (\ref{dynamics-v}) can be written as 
\begin{align}
  \mathcal{L} = \frac{e^{\beta U}}{\beta}  \frac{\partial}{\partial x_i}\Big(e^{-\beta U}a_{ij}\frac{\partial}{\partial x_j}\Big)\,. \label{overdamp-l}
\end{align}
Under mild conditions on $U$, it is known that,
for any (smooth, uniformly positive definite) coefficient $a$, the dynamics
(\ref{dynamics-v}) has the common unique invariant measure whose probability
density is $\frac{1}{Z}e^{-\beta U}$ with respect to the Lebesgue measure on
$\mathbb{R}^d$.

Motivated by these previous work, in this paper we try to answer the following
two questions.
\vspace{0.05cm}

{
(Q1) \hspace{0.1cm} \textit{Besides the process constructed in
  \cite{projection_diffusion} that is closely related to (\ref{dynamics-id}),
  can we obtain other diffusion processes  on $\Sigma$, which are probably
  related to (\ref{dynamics-v}) involving the coefficients $\sigma, a$, and have the same invariant measure? 
In particular, can we construct SDEs on $\Sigma$ whose invariant measure is
$\mu_1$? 
}
\vspace{0.1cm}

(Q2) \hspace{0.1cm} \textit{Numerically, instead of sampling $\mu_2$,
can we directly estimate the mean value in (\ref{mean-f-intro}) with respect to $\mu_1$, preferably with a numerical algorithm that is easy to implement? }
\vspace{0.3cm}

The main contributions of the current work are related to the above questions
and are summarized below. First, concerning Question (Q1), 
in Theorem~\ref{thm-mu1-mu2} of Section~\ref{sec-n-rn}, we will construct a family of diffusion processes on $\Sigma$ which sample
either $\mu_1$ or $\mu_2$. In particular, we show that the diffusion process
\begin{align}
 dX_s^i 
= & - (Pa)_{ij} 
\frac{\partial U}{\partial x_j}\,ds + \frac{1}{\beta} \frac{\partial
(Pa)_{ij}}{\partial x_j}\,ds + \sqrt{2\beta^{-1}}\, P_{j,i}\, dW_s^j\,, \quad 1 \le i \le d\,,
\label{dynamics-1-submanifold-intro}
\end{align}
evolves on $\Sigma$ and the invariant measure is the conditional probability
measure $\mu_1$ in (\ref{mu1-intro}), where the projection map $P$ and the invertible $k \times k$ symmetric matrix $\Psi$
are given by 
\begin{align}
P=\mbox{id}- a\nabla\xi\Psi^{-1}\nabla\xi^T,\quad  \Psi = \nabla\xi^T a \nabla \xi.
\label{p-ij-brief-def}
\end{align}
Correspondingly, the infinitesimal generator of (\ref{dynamics-1-submanifold-intro}) is 
\begin{align}
  \mathcal{L} = \frac{e^{\beta U}}{\beta} \frac{\partial}{\partial x_i}\Big(e^{-\beta
  U}(Pa)_{ij}\frac{\partial}{\partial x_j}\Big)\,, 
  \label{generator-l-mu1-intro}
\end{align}
which should be compared to the infinitesimal generator in (\ref{overdamp-l}).
Second, concerning Question (Q2), in Section~\ref{sec-numerical-scheme-mu1} we study a numerical algorithm which
estimates the mean value $\overline{f}$ in (\ref{mean-f-intro}). Specifically,
we propose to use the numerical scheme
\begin{align}
  \begin{split}
    x^{(l + \frac{1}{2})}_i =& x^{(l)}_i + \Big(-a_{ij}\frac{\partial U}{\partial
  x_j} + \frac{1}{\beta}\frac{\partial a_{ij}}{\partial x_j}\Big)(x^{(l)})\, h
  + \sqrt{2 \beta^{-1}h}\, \sigma_{ij}(x^{(l)})\, \eta^{(l)}_j\,, \quad 1 \le i \le
  d \,,\\
  x^{(l+1)} =& \Theta\big(x^{(l+\frac{1}{2})}\big)\,,
\end{split}
\label{scheme-mu1-intro}
\end{align}
with $x^{(0)} \in \Sigma$, and to approximate $\overline{f}$ by $\widehat{f}_n = \frac{1}{n} \sum\limits_{l=0}^{n-1} f(x^{(l)})$.
In (\ref{scheme-mu1-intro}), $h$ is the step-size, 
$\bm{\eta}^{(l)}=(\eta_1^{(l)}, \eta_2^{(l)}, \cdots,
\eta_d^{(l)})^T$ are independent $d$-dimensional standard Gaussian random
variables, and 
$\Theta(x) = \lim\limits_{s\rightarrow +\infty}\varphi(x,s)$ is the limit of the flow map 
\begin{align}
  \begin{split}
    \frac{d\varphi(x,s)}{ds} =& - (a\nabla F) \big(\varphi(x,s)\big)\,,\quad
    \varphi(x,0) = x, \qquad  \forall~x\in \mathbb{R}^d\,, 
\end{split}
  \label{phi-map-intro}
\end{align}
with $F(x)=\frac{1}{2}|\xi(x)|^2=\frac{1}{2}\sum\limits_{\alpha=1}^{k}\xi_\alpha^2(x)$.
Following the approach developed in~\cite{conv-time-averaging},
in Theorem~\ref{thm-estimate-scheme-on-submanifold}, we obtain the
estimates of the approximation error between $\widehat{f}_n$ and
$\overline{f}$. While different
constraint approaches have been proposed in the
literature~\cite{Leimkuhler-geodesic-2016,Tony-constrained-langevin2012,goodman-submanifold}, to the best of the
author's knowledge, constraint using the flow map $\varphi$ has not been studied yet. 

Let us comment on the two contributions mentioned above.
First, knowing the SDE (\ref{dynamics-1-submanifold-intro}) and the expression
(\ref{generator-l-mu1-intro}) of its infinitesimal generator $\mathcal{L}$ is helpful for analysis. In fact, in Section~\ref{sec-numerical-scheme-mu1}, 
the analysis of sampling error estimate of the scheme \eqref{scheme-mu1-intro} 
relies on Poisson equation on $\Sigma$ related to $\mathcal{L}$ in (\ref{generator-l-mu1-intro}). 
Furthermore, (\ref{generator-l-mu1-intro}) plays a role in the work~\cite{zhang-lelievre-pathwise2018} in analyzing the approximation quality of the effective dynamics, while SDE (\ref{dynamics-1-submanifold-intro})
has been used in~\cite{non-equilibrium-2018} to study fluctuation relations and Jarzynski's equality for nonequilibrium systems.
Second, we emphasize that $\Theta(x)$ in the scheme (\ref{scheme-mu1-intro}) can be evaluated by solving the ODE (\ref{phi-map-intro}) starting from $x$. 
Although $\Theta$ is defined as the limit when $s \rightarrow +\infty$,
in many cases the computational cost is not large, due to the exponential
convergence of the (gradient) flow (\ref{phi-map-intro}) to its limit,
particularly for the initial state $x=x^{(l+\frac{1}{2})}$ that is close to $\Sigma$. 
Furthermore, comparing to the direct (Euler-Maruyama) discretization of SDE
(\ref{dynamics-1-submanifold-intro}) which may deviate from $\Sigma$ and
requires second order derivatives of $\xi$, the scheme (\ref{scheme-mu1-intro})
satisfies $x^{(l)} \in \Sigma$ for all $l \ge 0$, and 
it doesn't require computing the second order derivatives of $\xi$. 
Therefore, we expect the numerical scheme
(\ref{scheme-mu1-intro})--(\ref{phi-map-intro}) is both stable and relatively easy to implement.
Readers are referred to Remark~\ref{rmk-on-numerical-scheme-1}--\ref{rmk-on-numerical-scheme-about-ode}
in Section~\ref{sec-numerical-scheme-mu1} and Example $1$ in Section~\ref{sec-example} for further algorithmic discussions. 

In the following, we briefly explain the approach that we will use to study Question~(Q1), as well as the idea behind the scheme (\ref{scheme-mu1-intro})--(\ref{phi-map-intro}).
Concerning Question~(Q1), we take the manifold point of view by considering
$\mathbb{R}^d$ as a Riemannian manifold $\mathcal{M}=(\mathbb{R}^d, g)$ with the metric $g=a^{-1}$, defined by 
\begin{align}
  g(\bm{u}, \bm{v}) = \langle \bm{u}, \bm{v}\rangle_g = u_i (a^{-1})_{ij}
  v_j\,, \quad \forall~\bm{u}, \bm{v} \in \mathbb{R}^d\,.
  \label{ip-a-intro}
\end{align}
A useful observation is that, for $\mathcal{L}$ in \eqref{overdamp-l}, we have~\cite{effective_dyn_2017}
\begin{align*}
  \mathcal{L} f &= \Big[-\mbox{grad}^{\mathcal{M}} \Big(U + \frac{1}{2\beta} \ln G\Big) + \frac{1}{\beta} \Delta^{\mathcal{M}}\Big] f\,,
  \quad \forall~\mbox{smooth}~ f : \mathbb{R}^d \rightarrow \mathbb{R}\,,
\end{align*}
where $G=\mbox{det}g$, and $\mbox{grad}^{\mathcal{M}}$, $\Delta^{\mathcal{M}}$ denote the gradient and the Laplacian-Beltrami operator on $\mathcal{M}$, respectively. 
Accordingly, (\ref{dynamics-v}) can be written as a SDE on $\mathcal{M}$ as
\begin{align}
  d\widetilde{Y}_s = -\mbox{grad}^{\mathcal{M}}\Big(U + \frac{1}{2\beta} \ln G\Big)\, ds + \sqrt{2\beta^{-1}}
  d\widetilde{B}_s\,,
  \label{sde-x-manifold}
\end{align}
where $\widetilde{B}_s$ is the Brownian motion on $\mathcal{M}$~\cite{analysis_manifold}. 
Conversely, SDE (\ref{dynamics-v}) can be seen as the equation of (\ref{sde-x-manifold})
under the (global) coordinate chart of $\mathcal{M}$. This equivalence allows us to study (\ref{dynamics-v}) on $\mathbb{R}^d$ by
the corresponding SDE (\ref{sde-x-manifold}) on manifold $\mathcal{M}$. 
Comparing to (\ref{dynamics-v}), one advantage of working with
the abstract equation (\ref{sde-x-manifold}) is that the invariant measure
of (\ref{sde-x-manifold}) can be recognized as
easily as in (\ref{dynamics-id}), provided that we apply integration by parts
formula on the manifold $\mathcal{M}$. 

A family of ergodic SDEs on $\Sigma$ (i.e., Question (Q1)) is obtained by taking the same manifold
point of view. Specifically, consider $\Sigma$ as a submanifold of $\mathcal{M}$ and
denote by $\mbox{grad}^{\Sigma}$, $\Delta^{\Sigma}$, $B_s$ the gradient operator, the Laplacian and the Brownian motion 
(with generator $\frac{1}{2} \Delta^{\Sigma}$~\cite{analysis_manifold}) on $\Sigma$, respectively.
Since the infinitesimal generator of the SDE 
\begin{align}
  dY_s = - \mbox{grad}^{\Sigma} U ds + \sqrt{2 \beta^{-1}} dB_s
  \label{sde-manifold-n-intro}
\end{align}
is $\mathcal{L} = -\mbox{grad}^{\Sigma} U + \frac{1}{\beta} \Delta^{\Sigma}$, 
under mild assumptions on $U$, it is straightforward to verify that dynamics (\ref{sde-manifold-n-intro}) 
evolves on $\Sigma$ and has the unique invariant measure $\frac{1}{Z}e^{-\beta U}d\nu_g$,
where $\nu_g$ is the surface measure on $\Sigma$ induced from the metric $g=a^{-1}$ on $\mathcal{M}=(\mathbb{R}^d, g)$.
 Therefore, answering Question (Q1) boils down to calculating the
 expression of (\ref{sde-manifold-n-intro}) under the coordinate
 chart of $\mathcal{M}$ (not $\Sigma$). 
 This will be achieved by calculating the expressions
 of $\mbox{grad}^{\Sigma}$, $\Delta^{\Sigma}$ under the coordinate chart of
 $\mathcal{M}$ and then figuring out the
 relation between the two measures $\nu$ and $\nu_g$.

Concerning the idea behind the numerical scheme (\ref{scheme-mu1-intro})--(\ref{phi-map-intro}),
we recall that one way to (approximately) sample $\mu_1$ on $\Sigma$ is to constrain
the dynamics (\ref{dynamics-v}) in the neighborhood of $\Sigma$ by adding an extra potential to it. 
This is often termed as softly constrained dynamics~\cite{projection_diffusion,Maragliano2006}
and has been widely used in applications.
In this context, one consider the dynamics
\begin{equation}\label{dynamics-v-soft-intro}
\begin{aligned}
  d X^{\epsilon,i}_s & = \Big[- a_{ij} \frac{\partial U}{\partial
x_j}\, -
\frac{1}{\epsilon}a_{ij}\frac{\partial }{\partial x_j}\Big(\frac{1}{2}\sum\limits_{\alpha=1}^k
  \xi_\alpha^2\Big)\,+
\frac{1}{\beta} \frac{\partial a_{ij}}{\partial x_j}\Big]\, ds +
  \sqrt{2\beta^{-1}} \sigma_{ij}\, dW^j_s\,, 
\end{aligned}
\end{equation}
where $\epsilon > 0$, $1 \le i \le d$, based on the fact that the invariant measure
of (\ref{dynamics-v-soft-intro}) converges weakly to $\mu_1$, as $\epsilon \rightarrow 0$.
The dynamics (\ref{dynamics-v-soft-intro}) stays close to $\Sigma$ most of the time, thanks to the existence of the extra constraint force.
Furthermore, only the first order
derivatives of $\xi$ are involved. In spite of these nice properties, however, direct simulation of
(\ref{dynamics-v-soft-intro}) is inefficient when $\epsilon$ is small, 
because the time step-size in numerical simulations becomes severely
limited due to the strong stiffness in the dynamics. 
Indeed, our numerical scheme is motivated in order to overcome the aforementioned drawback of the softly constrained dynamics (\ref{dynamics-v-soft-intro}),  
and the scheme (\ref{scheme-mu1-intro})--(\ref{phi-map-intro}) can be viewed as a multiscale numerical method for
(\ref{dynamics-v-soft-intro}), where the stiff and non-stiff terms in 
(\ref{dynamics-v-soft-intro}) are handled separately~\cite{vanden-eijnden2003}. 
In contrast to the previous work~\cite{katzenberger1991,fatkullin2010,projection_diffusion}, where the
convergence of (\ref{dynamics-v-soft-intro}) was studied on a finite time interval, our result concerns the long time sampling efficiency of the discretized numerical scheme.

Before concluding this introduction, we compare the current work with several previous ones. Generally speaking, Monte Carlo samplers (based on
ergodicity) either on $\mathbb{R}^d$ or on its submanifolds can be
classified into Metropolis-adjusted samplers and samplers without Metropolis step (unadjusted).
For Metropolis-adjusted methods, in particular, exploiting
Riemannian geometry structure to develop MCMC methods has been studied in~\cite{riemann-mcmc}.
The authors there demonstrated that incorporating the geometry of the space into numerical methods can lead
to significant improvement of the sampling efficiency. 
In line with this development, in Section~\ref{sec-example} we will consider a concrete example
where a non-constant matrix $a$ can help remove the stiffness in the sampling task.
On the other hand, despite of the common Riemannian manifold point of view in the current work and in~\cite{riemann-mcmc}, 
the main difference is that the current work deals with sampling on the
submanifold $\Sigma$ instead of the entire $\mathbb{R}^d$ (or its domain).
The derivations in the current work are more involved mainly due to this difference.
Besides sampling on the entire space, Metropolis-adjusted samplers on submanifolds,
using either MCMC or Hybrid Monte Carlo, have been considered in several recent
work~\cite{pmlr-v22-brubaker12,Tony-constrained-langevin2012,goodman-submanifold,tony-gabriel-hmc-submanifolds}. Reversible Metropolis random walk on submanifolds has been
constructed in~\cite{goodman-submanifold}, which is then extended in~\cite{tony-gabriel-hmc-submanifolds} by allowing non-zero gradient forces in the proposal move. 
In contrast to these Metropolis-adjusted samplers, the numerical scheme
(\ref{scheme-mu1-intro})--(\ref{phi-map-intro}) in the current work is unadjusted (without Metropolis-step) and samples the conditional probability measure $\mu_1$
when the step-size $h\rightarrow 0$. This means that in practice the step-size $h$ should be chosen properly such that the discretization error is tolerable.
In this direction, we point out that unadjusted samplers on $\mathbb{R}^d$,
which naturally arise from discretizations of SDEs, have been well studied in
the
literature~\cite{talay-tubaro-1990,gabriel-ben-ima-2016,bou-rabee-2010,debussche_weak_backward2012,abdulle_high-order-2014,conv-time-averaging}.
The current work can be thought as a further step along this direction for sampling schemes on submanifolds,
by applying the machinery developed in~\cite{conv-time-averaging}. 
Comparison between the scheme (\ref{scheme-mu1-intro}) and the Metropolis-adjusted algorithm
in~\cite{goodman-submanifold} can be found in
Remark~\ref{rmk-cmp-metropolis-or-not}, as well as in Example $2$ in Section~\ref{sec-example}.
We also refer to~\cite{gabriel-ben-ima-2016} for related discussions.

The rest of the paper is organized as follows. In Section~\ref{sec-n-rn}, we construct ergodic SDEs on $\Sigma$ which
sample either $\mu_1$ or $\mu_2$. In Section~\ref{sec-numerical-scheme-mu1}, we study the numerical scheme
(\ref{scheme-mu1-intro})--(\ref{phi-map-intro}) and quantify its approximation error in estimating the mean value in
(\ref{mean-f-intro}). In Section~\ref{sec-example}, we demonstrate our results through concrete examples.
Conclusions and further discussions are made in Section~\ref{sec-conclusion}.
Technical details related to the Riemannian manifold $\mathcal{M}$ in Section~\ref{sec-n-rn} are included in Appendix~\ref{app-sec-manifold}.
Proofs of the results in Section~\ref{sec-numerical-scheme-mu1} are collected in Appendix~\ref{app-sec-numerical}.

Finally, we conclude this introduction with the assumptions which will be
made (implicitly) throughout this paper.
\begin{assump}
  The matrix $\sigma: \mathbb{R}^d \rightarrow \mathbb{R}^{d \times d}$ is
  both smooth and invertible at each $x \in \mathbb{R}^d$. The matrix
  $a=\sigma\sigma^{T}$ is uniformly positive definite with uniformly bounded
  inverse $a^{-1}$. 
 \label{assump-1}
\end{assump}
\begin{assump}
  The function $\xi: \mathbb{R}^d \rightarrow \mathbb{R}^k$ is $C^2$ smooth and the level set $\Sigma$ is both connected and compact, such that 
  $\mbox{rank}(\nabla\xi) = k$ at each $x \in \Sigma$. 
  \label{assump-2}
\end{assump}
\section{SDEs of ergodic diffusion processes on $\Sigma$}
\label{sec-n-rn}
In this section, we construct SDEs of ergodic processes on $\Sigma$ that sample a given invariant measure. The main result of this section is Theorem~\ref{thm-mu1-mu2}, which shows that the invariant measure of the SDE \eqref{dynamics-1-submanifold-intro} in Introduction is the conditional probability measure
$\mu_1$ in \eqref{mu1-intro}. Readers who are mainly interested in numerical
algorithms can read Theorem~\ref{thm-mu1-mu2} and then directly jump to Section~\ref{sec-numerical-scheme-mu1}. 

First of all, let us point out that, the semigroup approach based on
functional inequalities on Riemannian manifolds is well developed 
to study the solution of Fokker-Planck equation towards equilibrium.  
One sufficient condition for the exponential convergence of the
Fokker-Planck equation (and therefore the ergodicity of the corresponding dynamics) is the famous Bakry-Emery criterion~\cite{Bakry1994}. In particular, concrete conditions
are given in~\cite{STURM2005} which guarantee the exponential convergence to
the unique invariant measure. In the following, we will always assume that the
potential $U \in C^\infty(\Sigma)$ and the Bakry-Emery condition in \cite{STURM2005} is satisfied.

Recall that $\mathcal{M}=(\mathbb{R}^d, g)$, where $g=a^{-1}$ and $\nu_g$ is the surface measure on $\Sigma$ induced from
$\mathcal{M}$. Matrices $P$, $P_{j,i}$ are given in \eqref{p-ij}, \eqref{p-i-j} in Appendix~\ref{app-sec-manifold}, respectively.
For $1 \le i \le d$, $\bm{e}_i$ denotes the vector whose $i$th component equals to $1$ while all the other $d-1$ components equal to $0$.
We refer the reader to Appendix~\ref{app-sec-manifold} for further details.
Let us first consider the 
probability measure $\mu$ on $\Sigma$ given by $d\mu = \frac{1}{Z} e^{-\beta
U} d\nu_g$, where $\beta > 0$ and $Z$ is a normalization constant. 
The following proposition is a direct application of
Proposition~\ref{laplaceN-state} in Appendix~\ref{app-sec-manifold}.
\begin{prop}
  Consider the dynamics on $\mathbb{R}^d$ which satisfies the Ito SDE 
\begin{align}
  \begin{split}
    dY_s^i =& - (Pa)_{ij} 
    \frac{\partial \big[U - \frac{1}{2\beta}\ln \big((\det
    a)^{-1}\det(\nabla\xi^T a \nabla\xi)\big)\big]}{\partial x_j}\,ds + \frac{1}{\beta} \frac{\partial
(Pa)_{ij}}{\partial x_j}\,ds + \sqrt{2\beta^{-1}}\, P_{j,i}\, dW_s^j
\end{split}
\label{sde-mu}
\end{align}
for $1 \le i \le d$, where $W_s = (W_s^1, W_s^2, \cdots, W_s^d)^T$ is a $d$-dimensional Brownian motion.
Suppose $Y_0 \in \Sigma$, then $Y_s \in \Sigma$ almost surely for $s\ge 0$. 
Furthermore, it has a unique invariant measure $\mu$ given by $d\mu =
\frac{1}{Z} e^{-\beta U} d\nu_g$.
\label{sde-inv-mu}
\end{prop}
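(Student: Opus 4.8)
The plan is to recognize the coordinate SDE (\ref{sde-mu}) as the representation, in the global Euclidean chart of $\mathcal{M}$, of the intrinsic dynamics $dY_s = -\mbox{grad}^{\Sigma} U\, ds + \sqrt{2\beta^{-1}}\, dB_s$ on $\Sigma$ whose generator is $\mathcal{L} = -\langle \mbox{grad}^{\Sigma} U, \mbox{grad}^{\Sigma}\,\cdot\,\rangle_g + \frac{1}{\beta}\Delta^{\Sigma}$, and then to deduce invariance of $\mu$ from the integration by parts formula (\ref{integrate-by-part-sigma}). Proposition~\ref{laplaceN-state} supplies precisely the coordinate expression of $\Delta^{\Sigma}$ that makes this identification possible. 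Throughout I would freely use the algebraic identities for $P$ collected in (\ref{p-fact}) and (\ref{p-i-j}); in particular that $Pa$ is symmetric, since $(Pa)^T = aP^T = Pa$, and that $Pa\nabla\xi_\alpha = 0$, which follows from $(Pa)^T\nabla\xi_\alpha = aP^T\nabla\xi_\alpha = 0$ together with the symmetry of $Pa$.

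First I would establish that $Y_s\in\Sigma$. Applying Ito's formula to $\xi_\alpha(Y_s)$ with diffusion matrix $\Sigma_{ij} = \sqrt{2\beta^{-1}}\, P_{j,i}$ and $P_{l,i}P_{l,j} = (Pa)_{ij}$ from (\ref{p-i-j}), the martingale part carries the factor $\partial_i\xi_\alpha\, P_{j,i} = (\nabla\xi_\alpha^T P\sigma)_j$, which vanishes because $P^T\nabla\xi_\alpha = 0$. For the drift, the terms proportional to $(Pa)_{ij}\partial_i\xi_\alpha = (Pa\nabla\xi_\alpha)_j$ vanish, and the remaining contribution collapses to $\frac{1}{\beta}\,\partial_j\big[(Pa)_{ij}\partial_i\xi_\alpha\big]$, which is again zero. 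Here it is essential that, by Remark~\ref{rmk-1-near-omega}, $P$ and the relations (\ref{p-fact}) extend to a neighborhood $\mathcal{O}$ of $\Sigma$, so that $Pa\nabla\xi_\alpha = 0$ holds on all of $\mathcal{O}$ and hence its derivative vanishes there as well. Consequently $\xi_\alpha(Y_s)$ is constant up to the exit time $\tau$ from $\mathcal{O}$; since $\xi_\alpha(Y_0) = 0$ this forces $Y_{s\wedge\tau} \in \Sigma$, and the compactness of $\Sigma$ (Assumption~\ref{assump-2}) rules out reaching $\partial\mathcal{O}$ or explosion, giving $\tau = +\infty$ and $Y_s \in \Sigma$ almost surely.

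Next I would compute the generator of (\ref{sde-mu}) in the Euclidean chart. The second-order part is $\frac{1}{\beta}(Pa)_{ij}\partial_i\partial_j$, again by $P_{l,i}P_{l,j} = (Pa)_{ij}$. Writing $\Phi = (\det a)^{-1}\det(\nabla\xi^T a \nabla\xi)$ and collecting the drift, one obtains
\[
\mathcal{L} = -(Pa)_{ij}(\partial_j U)\partial_i + \tfrac{1}{2\beta}(Pa)_{ij}(\partial_j \ln\Phi)\partial_i + \tfrac{1}{\beta}\partial_j(Pa)_{ij}\,\partial_i + \tfrac{1}{\beta}(Pa)_{ij}\partial_i\partial_j\,.
\]
Comparing the last three terms with (\ref{prop-laplace-a}) identifies them as $\frac{1}{\beta}\Delta^{\Sigma}$, while the first term equals $-\langle \mbox{grad}^{\Sigma} U, \mbox{grad}^{\Sigma}\,\cdot\,\rangle_g$, using $\mbox{grad}^{\Sigma}U = Pa\nabla U$ and the identity $\langle Pa\nabla U, Pa\nabla f\rangle_g = (Pa)_{ij}\partial_i U\,\partial_j f$, which follows from $aP^Ta^{-1}Pa = Pa$. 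Hence $\mathcal{L} = -\langle \mbox{grad}^{\Sigma}U, \mbox{grad}^{\Sigma}\,\cdot\,\rangle_g + \frac{1}{\beta}\Delta^{\Sigma}$.

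Finally, invariance follows by testing against the generator: for smooth $f$,
\[
\int_{\Sigma}(\mathcal{L}f)\,e^{-\beta U}\,d\nu_g = \int_{\Sigma}\Big[-\langle \mbox{grad}^{\Sigma}U, \mbox{grad}^{\Sigma}f\rangle_g + \tfrac{1}{\beta}\Delta^{\Sigma}f\Big]e^{-\beta U}\,d\nu_g\,,
\]
and applying (\ref{integrate-by-part-sigma}) to the Laplacian term together with $\mbox{grad}^{\Sigma}e^{-\beta U} = -\beta e^{-\beta U}\mbox{grad}^{\Sigma}U$ makes the two contributions cancel exactly, so $\int_{\Sigma}(\mathcal{L}f)\,d\mu = 0$ and $\mu$ is invariant. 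Uniqueness then comes from the ergodicity guaranteed by the Bakry-Emery condition assumed in the standing hypotheses. I expect the main obstacle to be the staying-on-$\Sigma$ step: although the identities $P^T\nabla\xi_\alpha = 0$ and $Pa\nabla\xi_\alpha = 0$ make the relevant coefficients vanish, some care is needed to justify that they hold on the full neighborhood $\mathcal{O}$ (not merely on $\Sigma$) so that $\partial_j[(Pa)_{ij}\partial_i\xi_\alpha]$ is genuinely zero, and to handle the localization via the stopping time $\tau$ and compactness.
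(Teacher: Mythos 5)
Your proposal is correct and follows essentially the same route as the paper: identify the generator with $-\langle \mbox{grad}^{\Sigma}U,\mbox{grad}^{\Sigma}\cdot\rangle_g+\frac{1}{\beta}\Delta^{\Sigma}$ via Proposition~\ref{laplaceN-state}, show $d\xi_\alpha(Y_s)=0$ by Ito's formula using $P^T\nabla\xi_\alpha=0$ and $Pa\nabla\xi_\alpha=0$ on the neighborhood $\mathcal{O}$, deduce invariance from (\ref{integrate-by-part-sigma}), and get uniqueness from the Bakry--Emery assumption. The only differences are cosmetic: you carry out the level-set computation in coordinates rather than through $\mbox{grad}^{\Sigma}\xi_\alpha=0$, and you add an explicit stopping-time localization, both of which are consistent with the paper's argument.
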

\begin{proof}
  Using (\ref{p-i-j}) and Proposition~\ref{laplaceN-state} in Appendix~\ref{app-sec-manifold}, we know that the infinitesimal generator 
  of SDE (\ref{sde-mu}) is 
  \begin{align}
    \mathcal{L}f = -\langle\mbox{grad}^{\Sigma} U, \mbox{grad}^{\Sigma} f\rangle_g + \frac{1}{\beta}
  \Delta^{\Sigma} f\,, \quad \forall~f: \Sigma \rightarrow \mathbb{R}\,,
    \label{l-prop-1}
    \end{align}
    where $\mbox{grad}^{\Sigma}$, $\Delta^{\Sigma}$ are the gradient and Laplace-Beltrami operators on $\Sigma$ of $\mathcal{M}$, respectively.
    Applying Ito's formula to $\xi_\alpha(Y_s)$, we have 
    \begin{align*}
      d\xi_\alpha(Y_s) = \mathcal{L}\xi_\alpha(Y_s)\,ds + \sqrt{2\beta^{-1}} \frac{\partial
      \xi_\alpha(Y_s)}{\partial x_i} P_{j, i} dW_s^j\,, \quad 1 \le
      \alpha \le k\,.
    \end{align*}
    Using (\ref{l-prop-1}), \eqref{p-i-j} in Appendix~\ref{app-sec-manifold}, and the fact that $\mbox{grad}^{\Sigma}\xi_\alpha =
    P\mbox{grad}^{\mathcal{M}}\xi_\alpha=0$, it is straightforward to verify that
    \begin{align*}
      &\mathcal{L} \xi_\alpha = -\langle \mbox{grad}^{\Sigma} U, \mbox{grad}^{\Sigma}\xi_\alpha\rangle_g + \frac{1}{\beta}
      \mbox{div}^{\Sigma}(\mbox{grad}^{\Sigma}\xi_\alpha) = 0 \,,\\
      &\frac{\partial \xi_\alpha}{\partial x_i} P_{j, i} = 0 \,,
      \quad 1 \le j \le d\,,
    \end{align*}
    on $\Sigma$, which implies $d\xi_\alpha(Y_s) = 0$, $\forall s \ge 0$.
    Since $Y_0 \in \Sigma$, we conclude that $\xi_\alpha(Y_s) = \xi_\alpha(Y_0) = 0$ 
    a.s. $s \ge 0$, for $1 \le \alpha \le k$, and
    therefore $Y_s \in \Sigma$ for  $s \ge 0$, almost surely. 

    Using the expression (\ref{l-prop-1}) of $\mathcal{L}$ and the integration
    by parts formula (\ref{integrate-by-part-sigma}) in Appendix~\ref{app-sec-manifold}, it is easy to see that $\mu$ is an invariant
    measure of the dynamics (\ref{sde-mu}).  The uniqueness is implied by the
    exponential convergence result established in \cite[Remark 1.1 and Corollary 1.5]{STURM2005}, since we assume Bakry-Emery condition is satisfied.
\end{proof}

In the above, we have considered the level set $\Sigma$ as a submanifold of $\mathcal{M}=(\mathbb{R}^d, g)$. 
In applications, on the other hand, it is natural to view $\Sigma$ as a submanifold of the
standard Euclidean space $\mathbb{R}^d$, with the surface measure $\nu$ on $\Sigma$ that is induced
from the Euclidean metric on $\mathbb{R}^d$. 
As already mentioned in the Introduction, the following two probability measures 
\begin{align}
d\mu_1 = \frac{1}{Z} e^{-\beta U} \big[\mbox{det}(\nabla\xi^T \nabla\xi)\big]^{-\frac{1}{2}} d\nu\,,\qquad d\mu_2 = \frac{1}{Z} e^{-\beta U} d\nu, 
\label{mu-1-2}
\end{align}
where $Z$ denotes possibly different normalization constants,
are often interesting and arise in many
situations~\cite{blue-moon,projection_diffusion,effective_dynamics,effective_dyn_2017}.
In particular, $\mu_1$ has a probabilistic interpretation and 
often appears in the study of free energy calculation and model reduction of
stochastic dynamics~\cite{lelievre2010free,effective_dynamics}. 
In order to construct processes which sample $\mu_1$ or $\mu_2$, we need to 
figure out the relations between the two surface measures $\nu_g$ and $\nu$ on
$\Sigma$. 
\begin{lemma}
  Let $\nu_g$, $\nu$ be the surface measures on $\Sigma$ induced
  from the metric $g=a^{-1}$ and the Euclidean metric on $\mathbb{R}^d$,
  respectively. We have 
  \begin{align*}
    d\nu_g =
    (\det a)^{-\frac{1}{2}}
\bigg[\frac{\mbox{\normalfont{det}}(\nabla\xi^T
    a\nabla\xi)}{\mbox{\normalfont{det}}(\nabla\xi^T\nabla\xi)}\bigg]^{\frac{1}{2}}
d\nu\, .
\end{align*}
\label{lemma-sigma-relation}
\end{lemma}
\begin{proof}
  Let $x \in \Sigma$ and $\bm{v}_1, \bm{v}_2, \cdots, \bm{v}_{d-k}$ be a basis
  of $T_x\Sigma$. 
  Assume that $\bm{v}_i = c_{ij} \bm{e}_j$, where
  $c = (c_{ij})$ is a
  $(d-k) \times d$ matrix whose rank is $d-k$.
  Using the fact $\langle \bm{v}_i, \mbox{grad}^{\mathcal{M}}\xi_\alpha\rangle_g = 0$ for $1\le i \le d-k$, $1 \le
   \alpha\le k$, we can deduce that
   $c\,\nabla\xi = 0$. 
  Calculating the surface measures $\nu_g$ and $\nu$ under this basis, we
  obtain
  \begin{align}
    d\nu_g =
    \bigg[\frac{\mbox{det}(ca^{-1}c^T)}{\mbox{det}(cc^T)}\bigg]^{\frac{1}{2}}
    d\nu\,.
    \label{sigma-g-sigma}
  \end{align}
  To simplify the right hand side of (\ref{sigma-g-sigma}), we use the following equality 
  \begin{align*}
    \begin{pmatrix}
      c \\
      \nabla\xi^T\, a 
    \end{pmatrix}
    \begin{pmatrix}
      c^T & \nabla\xi
    \end{pmatrix}
    = 
    \begin{pmatrix}
      cc^T & 0 \\
      \nabla\xi^T a c^T & \nabla\xi^T a \nabla\xi 
    \end{pmatrix}
    = 
    \begin{pmatrix}
      ca^{-1} \\
      \nabla\xi^T
    \end{pmatrix}
    a
    \begin{pmatrix}
      c^T & \nabla\xi
    \end{pmatrix}\,.
  \end{align*}
  After computing the determinants of the last two matrices above, we obtain
  \begin{align*}
    \det (cc^T) \,\det(\nabla\xi^T a \nabla\xi) 
    =
    (\det a)\,\det \bigg[
        \begin{pmatrix}
      ca^{-1} \\
      \nabla\xi^T
    \end{pmatrix}
    \begin{pmatrix}
      c^T & \nabla\xi
    \end{pmatrix}
    \bigg]  
    = (\det a)\,~ \det 
    \begin{pmatrix}
      ca^{-1}c^T & ca^{-1}\nabla\xi \\
      0 & \nabla\xi^T\nabla\xi
    \end{pmatrix}\,.
  \end{align*}
  The conclusion follows after we substitute the above relation into
  (\ref{sigma-g-sigma}).
\end{proof}

Applying Lemma~\ref{lemma-sigma-relation} and
Proposition~\ref{sde-inv-mu}, we can obtain ergodic processes whose invariant
measures are given in (\ref{mu-1-2}). 
\begin{thm}
  Let $\mu_1, \mu_2$ be the two probability measures on $\Sigma$
  defined in (\ref{mu-1-2}).
  Consider the dynamics $X_s$, $Y_s$ on $\mathbb{R}^d$ which satisfy the Ito SDEs 
\begin{align}
  \begin{split}
 dX_s^i 
= & - (Pa)_{ij} 
\frac{\partial U}{\partial x_j}\,ds + \frac{1}{\beta} \frac{\partial
(Pa)_{ij}}{\partial x_j}\,ds + \sqrt{2\beta^{-1}}\, P_{j,i}\, dW_s^j\,,
  \end{split}
\label{dynamics-1-submanifold}
\end{align}
and
\begin{align}
  \begin{split}
  dY_s^i = & - (Pa)_{ij} 
\frac{\partial \big[U - \frac{1}{2\beta}\ln \det(\nabla\xi^T \nabla\xi)\big]}{\partial x_j}\,ds + \frac{1}{\beta} \frac{\partial
(Pa)_{ij}}{\partial x_j}\,ds + \sqrt{2\beta^{-1}}\, P_{j,i}\, dW_s^j\,,
  \end{split}
\label{dynamics-2-submanifold}
\end{align}
for $1 \le i \le d$, where $\beta > 0$ and $W_s = (W_s^1, W_s^2,\cdots, W_s^d)^T$ is a
$d$-dimensional Brownian motion. Suppose that $X_0 , Y_0 \in \Sigma$, then $X_s,
Y_s \in \Sigma$ almost surely for $s\ge 0$. 
Furthermore, the unique invariant probability measures of the dynamics $X_s$ and $Y_s$ 
are $\mu_1$ and $\mu_2$, respectively.
\label{thm-mu1-mu2}
\end{thm}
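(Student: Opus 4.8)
The plan is to apply Proposition~\ref{sde-inv-mu} twice, each time with a suitably modified potential, and then to convert the resulting invariant measure $\frac{1}{Z}e^{-\beta V}\,d\nu_g$ into the Euclidean form using Lemma~\ref{lemma-sigma-relation}. Introduce the shorthand $\Phi = \frac{1}{2\beta}\ln\big((\det a)^{-1}\det(\nabla\xi^T a\nabla\xi)\big)$, which is smooth on a neighborhood of $\Sigma$ by Remark~\ref{rmk-1-near-omega}, so that the drift of the SDE~(\ref{sde-mu}) associated with a potential $V$ reads $-(Pa)_{ij}\,\partial_j(V-\Phi) + \frac{1}{\beta}\partial_j (Pa)_{ij}$. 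By Proposition~\ref{sde-inv-mu} applied with $V$ in place of $U$, the corresponding process stays on $\Sigma$ whenever it starts there, and it admits the unique invariant measure $\frac{1}{Z}e^{-\beta V}\,d\nu_g$.

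For $X_s$, which should target $\mu_1$, I would take $V = U + \Phi$. Then $V - \Phi = U$, so the SDE~(\ref{sde-mu}) reduces verbatim to~(\ref{dynamics-1-submanifold}). To identify the invariant measure, I substitute $V = U + \Phi$ into $\frac{1}{Z}e^{-\beta V}\,d\nu_g$ and then replace $d\nu_g$ by its expression from Lemma~\ref{lemma-sigma-relation}; the factors $(\det a)^{\pm 1/2}$ and $[\det(\nabla\xi^T a\nabla\xi)]^{\pm 1/2}$ cancel, leaving exactly $\frac{1}{Z}e^{-\beta U}[\det(\nabla\xi^T\nabla\xi)]^{-1/2}\,d\nu = d\mu_1$.

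For $Y_s$, targeting $\mu_2$, the same recipe works with $V = U + \Phi - \frac{1}{2\beta}\ln\det(\nabla\xi^T\nabla\xi)$. Now $V - \Phi = U - \frac{1}{2\beta}\ln\det(\nabla\xi^T\nabla\xi)$, so~(\ref{sde-mu}) reduces to~(\ref{dynamics-2-submanifold}); repeating the previous cancellation, the extra factor $[\det(\nabla\xi^T\nabla\xi)]^{1/2}$ coming from $V$ removes the $[\det(\nabla\xi^T\nabla\xi)]^{-1/2}$ weight, and the invariant measure becomes $\frac{1}{Z}e^{-\beta U}\,d\nu = d\mu_2$. The almost-sure invariance of $\Sigma$ under both processes is inherited directly from Proposition~\ref{sde-inv-mu}, as is uniqueness, which continues to hold because the modified potentials remain smooth on the compact manifold $\Sigma$ and hence still fulfil the standing assumptions.

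I do not expect a genuine analytic obstacle here: the argument is pure bookkeeping once Proposition~\ref{sde-inv-mu} and Lemma~\ref{lemma-sigma-relation} are in hand. The only point requiring care is matching the logarithmic-determinant terms exactly, i.e.\ checking that the combination $\Phi$ hard-wired into the drift of~(\ref{sde-mu}) is precisely what absorbs the Jacobian factor $(\det a)^{-1/2}[\det(\nabla\xi^T a\nabla\xi)/\det(\nabla\xi^T\nabla\xi)]^{1/2}$ of Lemma~\ref{lemma-sigma-relation}, so that all dependence on $a$ and on $\det(\nabla\xi^T a\nabla\xi)$ disappears from the target densities. I would verify this cancellation symbolically first, since a sign error there would silently corrupt both identifications.
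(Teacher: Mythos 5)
Your proposal is correct and follows essentially the same route as the paper: the paper's proof likewise rewrites $\mu_1$ and $\mu_2$ via Lemma~\ref{lemma-sigma-relation} as $\frac{1}{Z}e^{-\beta V}d\nu_g$ with exactly the modified potentials $V=U+\Phi$ and $V=U+\Phi-\frac{1}{2\beta}\ln\det(\nabla\xi^T\nabla\xi)$ you identify, and then invokes Proposition~\ref{sde-inv-mu}. The cancellation of the $(\det a)^{\pm 1/2}$ and $[\det(\nabla\xi^T a\nabla\xi)]^{\pm 1/2}$ factors that you flag as the only delicate point does indeed go through as you describe.
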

\begin{proof}
  Applying Lemma~\ref{lemma-sigma-relation}, we can rewrite the probability measures
  $\mu_1, \mu_2$ as
  \begin{align}
    \begin{split}
    &d\mu_1 = \frac{1}{Z} e^{-\beta
    U} \big[\mbox{det}(\nabla\xi^T\nabla\xi)\big]^{-\frac{1}{2}} d\nu
    = \frac{1}{Z} \mbox{exp}\Big[-\beta \Big(U + \frac{1}{2\beta} \ln
	\frac{\det(\nabla\xi^T a \nabla\xi)}{\det a} \Big)\Big]
    d\nu_g\,,\\
    &d\mu_2 = \frac{1}{Z} e^{-\beta
    U}d \nu= \frac{1}{Z} \mbox{exp}\Big[-\beta \Big(U + \frac{1}{2\beta}
    \ln\frac{\det\big(\nabla\xi^T a \nabla\xi\big)}{(\det a) \det(\nabla\xi^T\nabla\xi)}\Big)\Big]
    d\nu_g\,,
  \end{split}
  \label{mu1-mu2-g}
\end{align}
where again $Z$ denotes different normalization constants.
Applying Proposition~\ref{sde-inv-mu} to the two probability measures expressed in
(\ref{mu1-mu2-g}), we can conclude that both the dynamics $X_s$ in
(\ref{dynamics-1-submanifold}) and $Y_s$ in (\ref{dynamics-2-submanifold}) evolve on
the submanifold $\Sigma$, and their 
invariant probability measures are given by $\mu_1$ and $\mu_2$, respectively.
\end{proof}
\begin{remark}
  Under Assumptions~\ref{assump-1}--\ref{assump-2}, we can find a neighborhood $\mathcal{O}$ of $\Sigma$, such
  that $P$ can be extended to $\mathcal{O}$. Furthermore, the relations in
   (\ref{p-fact}) in Appendix~\ref{app-sec-manifold} are still satisfied in $\mathcal{O}$. Due to this fact, 
   in \eqref{dynamics-2-submanifold}--\eqref{dynamics-2-submanifold} we can talk about the derivatives of $P$ at states $x\in \Sigma$. 
   \label{rmk-1-near-omega}
\end{remark}
\begin{remark}
  \begin{enumerate}
    \item
      Notice that, similar to (\ref{overdamp-l}), the infinitesimal generator
      of $X_s$ in (\ref{dynamics-1-submanifold}) can be written as 
\begin{align}
  \mathcal{L} = \frac{e^{\beta U}}{\beta}  \frac{\partial}{\partial x_i}\Big(e^{-\beta
  U}(Pa)_{ij}\frac{\partial}{\partial x_j}\Big)\,. 
  \label{generator-l-of-ys}
\end{align}
      Using (\ref{mu1-mu2-g}) and (\ref{integrate-by-part-sigma}) in Appendix~\ref{app-sec-manifold}, we can also verify the integration by parts formula
      \begin{align}
	\int_{\Sigma} (\mathcal{L}f) f'\, d\mu_1 = 
	\int_{\Sigma} (\mathcal{L}f') f\, d\mu_1 = 
	- \frac{1}{\beta} \int_{\Sigma} (Pa \nabla f)\cdot \nabla f' \, d\mu_1\,, 
	\label{integration-by-part-mu1}
      \end{align}
      for any two $C^2$ smooth functions $f, f': \Sigma \rightarrow \mathbb{R}$.
\item
   Using Jacobi's formula \cite{matrix-book} $\frac{\partial
      \ln\det (\nabla\xi^T\nabla\xi)}{\partial x_j} =
      (\nabla\xi^T\nabla\xi)^{-1}_{\alpha\eta}\frac{\partial
      (\nabla\xi_\alpha^T\nabla\xi_\eta)}{\partial x_j}$ and $(Pa)_{ij}\partial_j\xi_\alpha = 0$, 
      the equation (\ref{dynamics-2-submanifold}) can be simplified as 
      \begin{align*}
  dY_s^i = & - (Pa)_{ij} 
	\frac{\partial U}{\partial x_j}\,ds + \frac{1}{\beta} Q_{jl}\frac{\partial
(Pa)_{ij}}{\partial x_l}\,ds + \sqrt{2\beta^{-1}}\, P_{j,i}\,
	dW_s^j\,,
      \end{align*}
      where the matrix $Q = \mbox{id} - \nabla\xi
      (\nabla\xi^T\nabla\xi)^{-1}\nabla\xi^T$. 
  In the special case when $g=a = \mbox{\normalfont{id}}$, we have $\nu_g = \nu$
      and $P_{j, i} = P_{ji}=Q_{ji}$ from (\ref{p-i-j}). 
      Accordingly, 
  we can write the dynamics (\ref{dynamics-2-submanifold}) as
  \begin{align}
    \begin{split}
    dY_s^i =& 
- P_{ij} \frac{\partial U}{\partial x_j}\,ds 
+ \frac{1}{\beta} P_{lj}\frac{\partial P_{li}}{\partial x_j}\,ds +
\sqrt{2\beta^{-1}}\, P_{ji}\, dW_s^j\\
=& 
- P_{ij} \frac{\partial U}{\partial x_j}\,ds 
-\frac{1}{\beta} (\Psi^{-1})_{\alpha\gamma}  P_{lj} (\partial^2_{lj}\xi_\alpha) \partial_i\xi_\gamma\,ds
+ \sqrt{2\beta^{-1}}\, P_{ji}\, dW_s^j\,,\\
=& 
- P_{ij} \frac{\partial U}{\partial x_j}\,ds 
+ \frac{1}{\beta} H_i ds + \sqrt{2\beta^{-1}}\, P_{ji}\, dW_s^j\,,
\end{split}
    \label{dynamics-2-submanifold-id}
\end{align}
for $1 \le i \le d$, where $H=H_i\bm{e}_i$ is the mean curvature vector of
$\Sigma$ (see Proposition~\ref{prop-mean} in Appendix~\ref{app-sec-manifold}).
In Stratonovich form, (\ref{dynamics-2-submanifold-id}) can be written as 
  \begin{align}
    dY_s^i =& -P_{ij} \frac{\partial U}{\partial x_j} ds  
    +
  \sqrt{2\beta^{-1}} P_{ji} \circ\,dW_s^j\,, \hspace{0.2cm}  1 \le
i \le d\,.
\label{dynamics-1-submanifold-id-stratonovic}
\end{align}
      In this case, our results are accordant with those in~\cite{projection_diffusion}.
\end{enumerate}
\label{rmk-3}
\end{remark}

The dynamics constructed in Proposition~\ref{sde-inv-mu} and Theorem~\ref{thm-mu1-mu2} are
reversible on $\Sigma$, in the sense that their infinitesimal generators are
self-adjoint with respect to their invariant measures. In fact, using the same
idea, we can construct non-reversible ergodic SDEs on $\Sigma$ as well. 
We will only consider the conditional probability measure $\mu_1$, since it is
more relevant in applications and the result is also simpler.
\begin{corollary}
  Let $\mu_1$ be the conditional probability measure on $\Sigma$ defined in (\ref{mu-1-2}).
  The vector field $\bm{J}=(J_1, J_2, \cdots, J_d)^{T}=J_i\bm{e}_i$, defined
  on $x \in \Sigma$, satisfies 
  \begin{align}
    \begin{split}
    \bm{J}(x) \in T_x \Sigma, \qquad \forall~x \in \Sigma\,,\\
    P_{ij}\frac{\partial J_j}{\partial x_i} + J_{j} \frac{\partial P_{ij}}{\partial x_i} - \beta J_i \frac{\partial U}{\partial x_i} = 0\,.
    \end{split}
    \label{j-assumption}
  \end{align}
  Consider the dynamics $X_s$ on $\mathbb{R}^d$ which satisfies the Ito SDE 
\begin{align}
  \begin{split}
 dX_s^i 
= & J_i\,ds - (Pa)_{ij} 
\frac{\partial U}{\partial x_j}\,ds + \frac{1}{\beta} \frac{\partial
(Pa)_{ij}}{\partial x_j}\,ds + \sqrt{2\beta^{-1}}\, P_{j,i}\, dW_s^j\,,
  \end{split}
\label{dynamics-1-submanifold-j}
\end{align}
for $1 \le i \le d$, where $\beta > 0$ and $W_s = (W_s^1, W_s^2,\cdots, W_s^d)^T$ is a
$d$-dimensional Brownian motion.
  Suppose that $X_0 \in \Sigma$, then $X_s \in \Sigma$ almost surely for $s\ge 0$. 
Furthermore, the unique invariant probability measure of $X_s$ is $\mu_1$.
\label{corollary-on-non-reversible}
\end{corollary}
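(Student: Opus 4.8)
The plan is to treat the SDE \eqref{dynamics-1-submanifold-j} as the reversible dynamics \eqref{dynamics-1-submanifold} perturbed by the extra drift $\bm{J}$, and to recycle the computations behind Proposition~\ref{sde-inv-mu} and Theorem~\ref{thm-mu1-mu2}. The generator of \eqref{dynamics-1-submanifold-j} is $\mathcal{L}=\mathcal{L}_0+J_i\partial_i$, where $\mathcal{L}_0$ is the generator \eqref{generator-l-of-ys} of the $\mu_1$-reversible process $X_s$ in \eqref{dynamics-1-submanifold}. Two things must be shown: (i) $X_s$ remains on $\Sigma$, and (ii) $\mu_1$ is the unique invariant measure of $\mathcal{L}$.

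First I would establish invariance of $\Sigma$ exactly as in Proposition~\ref{sde-inv-mu}. Applying Ito's formula to $\xi_\alpha(X_s)$, the drift is $\mathcal{L}\xi_\alpha=\mathcal{L}_0\xi_\alpha+J_i\partial_i\xi_\alpha$ and the martingale coefficient is $\partial_i\xi_\alpha\,P_{j,i}$. The term $\mathcal{L}_0\xi_\alpha$ and the coefficient $\partial_i\xi_\alpha\,P_{j,i}$ already vanish on $\Sigma$ by the argument in the proof of Proposition~\ref{sde-inv-mu}; the new term vanishes because the tangency $\bm{J}(x)\in T_x\Sigma$ is equivalent to $J_i\partial_i\xi_\alpha=0$ (recall $T_x\Sigma=\{\bm{v}:\langle\bm{v},\mbox{grad}^{\mathcal{M}}\xi_\alpha\rangle_g=0\}=\{\bm{v}:v_i\partial_i\xi_\alpha=0\}$). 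Hence $d\xi_\alpha(X_s)=0$ and $X_s\in\Sigma$ almost surely.

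For invariance of $\mu_1$ it suffices to verify $\int_\Sigma\mathcal{L}f\,d\mu_1=0$ for all $f\in C^\infty(\Sigma)$. Since $\mu_1$ is invariant for $\mathcal{L}_0$ by Theorem~\ref{thm-mu1-mu2}, the problem reduces to showing $\int_\Sigma J_i\partial_i f\,d\mu_1=0$. The key algebraic facts are $P\bm{J}=\bm{J}$ (tangency together with $P^2=P$) and $aP^T=Pa$, which give $J_i\partial_i f=\langle\mbox{grad}^{\Sigma}f,\bm{J}\rangle_g$, so the first-order operator differentiates only along $\Sigma$. Writing $d\mu_1=\varrho\,d\nu_g$ with $\varrho\propto e^{-\beta U}\big[(\det a)^{-1}\det(\nabla\xi^T a\nabla\xi)\big]^{-1/2}$ as in \eqref{mu1-mu2-g}, and integrating by parts on the closed manifold $\Sigma$ via the divergence theorem \eqref{h-div-thm} (applied to the tangent field $f\varrho\bm{J}$, for which the right-hand side $\int_\Sigma\langle H,f\varrho\bm{J}\rangle_g\,d\nu_g$ vanishes), I would reduce the claim to the pointwise identity $\mbox{div}^{\Sigma}(\varrho\bm{J})=0$.

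The heart of the argument, and the main obstacle, is to put $\mbox{div}^{\Sigma}(\varrho\bm{J})$ into coordinates and to match it with \eqref{j-assumption}. Reading the divergence operator off the Laplacian formula \eqref{prop-laplace-a} through $\Delta^{\Sigma}=\mbox{div}^{\Sigma}\mbox{grad}^{\Sigma}$, and extending $\bm{J}$ off $\Sigma$ by $P\bm{J}$ (legitimate by Remark~\ref{rmk-1-near-omega}), one obtains for a tangent field $\mbox{div}^{\Sigma}\bm{J}=\partial_i(P_{ij}J_j)+\tfrac12 J_i\partial_i\ln\Lambda$ with $\Lambda=(\det a)^{-1}\det(\nabla\xi^T a\nabla\xi)$. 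Combining this with $\mbox{div}^{\Sigma}(\varrho\bm{J})=\varrho\,\mbox{div}^{\Sigma}\bm{J}+J_i\partial_i\varrho$ and $\partial_i\ln\varrho=-\beta\partial_i U-\tfrac12\partial_i\ln\Lambda$, the two $\tfrac12 J_i\partial_i\ln\Lambda$ contributions cancel, leaving $\mbox{div}^{\Sigma}(\varrho\bm{J})=\varrho\big[\partial_i(P_{ij}J_j)-\beta J_i\partial_i U\big]$, which is zero precisely by \eqref{j-assumption}. The delicate points are the extension-independence of this coordinate divergence formula and the bookkeeping of the $\ln\Lambda$ terms; everything else parallels the reversible proofs. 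Finally, uniqueness of the invariant measure can no longer be inferred from the symmetric Bakry--Emery criterion, but the second-order part of $\mathcal{L}$ is still $\tfrac1\beta\Delta^{\Sigma}$, so the diffusion is non-degenerate on the compact connected manifold $\Sigma$; hence it is irreducible and admits a unique invariant probability measure, which must therefore be $\mu_1$.
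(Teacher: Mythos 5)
Your plan follows the paper's proof almost step for step: same decomposition $\mathcal{L}^{\bm J}=\mathcal{L}+J_i\partial_i$, same tangency argument for confinement to $\Sigma$, and the same reduction of invariance to the pointwise condition $\mbox{div}^{\Sigma}(\varrho\bm{J})=0$ (this is exactly the paper's \eqref{div-j-to-check}). The one step where you have not actually supplied an argument is the coordinate formula $\mbox{div}^{\Sigma}\bm{w}=\partial_i(P_{ij}w_j)+\tfrac12 w_i\partial_i\ln\Lambda$ for a \emph{general} tangent field: knowing $\Delta^{\Sigma}=\mbox{div}^{\Sigma}\circ\mbox{grad}^{\Sigma}$ on all functions only determines $\mbox{div}^{\Sigma}$ on gradient fields, so ``reading it off'' \eqref{prop-laplace-a} is a heuristic, not a derivation, and you yourself flag this as the delicate point without resolving it. The paper closes precisely this gap by computing $\mbox{div}^{\Sigma}(\varrho\bm{J})=P_{li}\partial_l(\varrho J_i)+\varrho J_iP_{lr'}\Gamma^{r'}_{li}$ directly from Lemma~\ref{lemma-n-1} and then using the explicit Christoffel symbols \eqref{gamma-det-exp}, the identity $J_i=P_{ij}J_j$, and Lemma~\ref{lemma-identity-in-i12} to show $P_{lr'}J_i\Gamma^{r'}_{li}-\tfrac12 P_{li}J_i\partial_l\ln\Lambda=J_j\partial_iP_{ij}$; this yields exactly the formula you conjectured, after which the cancellation of the $\ln\Lambda$ terms against $\partial_i\ln\varrho$ and the match with \eqref{j-assumption} go through as you describe. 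So the proposal is sound in outline, but to make it a proof you must replace the ``read off the Laplacian'' step by the Christoffel-symbol computation (or an equivalent one). Your closing remark on uniqueness via non-degeneracy of the second-order part on the compact connected $\Sigma$ is a reasonable supplement that the paper leaves implicit.
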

The proof can be found in Appendix~\ref{app-sec-manifold}.
\begin{remark}
  We make two remarks regarding the non-reversible vector $\bm{J}$. 
  \begin{enumerate}
    \item
Notice that, as tangent vectors acting on functions, we have $P\bm{e}_j =
  P_{ij}\frac{\partial}{\partial x_i} \in T_x \Sigma$. Therefore, the
  condition (\ref{j-assumption}) indeed only depends on the value of $\bm{J}$ on $\Sigma$.
  Supposing that $\bm{J}$ and $U$ are defined in a neighborhood $\mathcal{O}$
  of $\Sigma$ (see Remark~\ref{rmk-1-near-omega}), the
  condition (\ref{j-assumption}) can be written equivalently as 
  \begin{align}
    \begin{split}
      &\bm{J}(x) \in T_x \Sigma, \qquad \forall~x \in \Sigma\,,\\
      & \frac{\partial }{\partial x_i} \Big[(P_{ij}J_j)e^{-\beta U}\Big]=0
      \,,\qquad \forall~x~\mbox{near}~\Sigma\,.
    \end{split}
    \label{j-assumption-rmk}
  \end{align}
    \item
  Recall that, the non-reversible dynamics on $\mathbb{R}^d$ 
  \begin{equation}\label{dynamics-v-j}
\begin{aligned}
  d \widetilde{Y}^i_s & = \widetilde{J}_i
  (\widetilde{Y}_s)\,ds -\Big(a_{ij} \frac{\partial U}{\partial x_j}\Big)
  (\widetilde{Y}_s)\,ds +
  \frac{1}{\beta} \frac{\partial a_{ij}}{\partial x_j}(\widetilde{Y}_s)\, ds +
  \sqrt{2\beta^{-1}} \sigma_{ij}(\widetilde{Y}_s)\, dW^j_s\,, \quad 1 \le i \le d\,,
\end{aligned}
\end{equation}
has the invariant probability density $\frac{1}{Z} e^{-\beta U}$, if the
vector $\widetilde{\bm{J}}=(\widetilde{J}_1, \widetilde{J}_2, \cdots,
\widetilde{J}_d)^T$ satisfies 
\begin{align}
  \mbox{\textnormal{div}} (\widetilde{\bm{J}} e^{-\beta U}) = \frac{\partial
  \big(\widetilde{J}_i e^{-\beta U}\big)}{\partial x_i} =0 \,,\qquad \forall~x \in \mathbb{R}^d\,.
  \label{j-assumption-rmk-rn}
\end{align}
Comparing (\ref{j-assumption-rmk-rn}) with (\ref{j-assumption-rmk}), it is
clear that $\bm{J}=\widetilde{\bm{J}}|_{\Sigma}$ satisfies the condition
(\ref{j-assumption}) of Corollary~\ref{corollary-on-non-reversible} and can be used to construct non-reversible
SDEs on $\Sigma$, provided that $P\widetilde{\bm{J}} = \widetilde{\bm{J}}$ in the neighborhood
$\mathcal{O}$. Roughly speaking, in this case the vector field
$\widetilde{\bm{J}}$ is tangential to the level sets of $\xi$ in $\mathcal{O}$. In general cases, however, 
we can not simply take $\bm{J} = P\widetilde{\bm{J}}$ to obtain non-reversible
processes on $\Sigma$ which sample $\mu_1$, since (\ref{j-assumption-rmk})
may not be satisfied. 
We refer to Remark~\ref{rmk-conjecture-non-reversible-scheme} in Section~\ref{sec-numerical-scheme-mu1} for an
alternative idea to develop ``non-reversible'' numerical schemes.
\end{enumerate}
\label{rmk-non-reversible}
\end{remark}

\section{Numerical scheme sampling the conditional measure on $\Sigma$}
\label{sec-numerical-scheme-mu1}
Given a smooth function $f : \Sigma \rightarrow \mathbb{R}$ on
the level set $\Sigma$, in this section we study the numerical scheme 
(\ref{scheme-mu1-intro})--(\ref{phi-map-intro}) in the Introduction, which allows us to 
numerically compute the average 
\begin{align}
  \overline{f} = \int_{\Sigma} f(x)\,d\mu_1(x)
  \label{mean-f}
\end{align}
with respect to the conditional probability measure $\mu_1$ in (\ref{mu1-intro}).

To motivate the numerical scheme, let us first introduce the softly constrained dynamics, which satisfies the SDE
\begin{equation}\label{dynamics-v-soft}
\begin{aligned}
  d X^{\epsilon,i}_s & = \Big[- a_{ij} \frac{\partial U}{\partial
x_j}\, -
\frac{1}{\epsilon}a_{ij}\frac{\partial }{\partial x_j}\big(\frac{1}{2}\sum\limits_{\alpha=1}^k
  \xi_\alpha^2\big)\,+
\frac{1}{\beta} \frac{\partial a_{ij}}{\partial x_j}\Big](X^\epsilon_s)\, ds +
  \sqrt{2\beta^{-1}} \sigma_{ij}(X^\epsilon_s)\, dW^j_s\,, 
\end{aligned}
\end{equation}
where $\epsilon > 0$, $1 \le i \le d$. It is straightforward to
verify that (\ref{dynamics-v-soft}) has a unique invariant measure 
\begin{align}
  d\mu^\epsilon(x) = \frac{1}{Z^\epsilon} \exp\Big[-\beta\Big(U(x) +
    \frac{1}{2\epsilon} 
  \sum\limits_{\alpha=1}^k \xi_\alpha^2(x)\Big)\Big]\,dx\,,\quad \forall x \in
  \mathbb{R}^d\,,
  \label{mu-eps}
\end{align}
where $Z^\epsilon$ is the normalization constant. 
As $\epsilon \rightarrow 0$, 
the authors in \cite{projection_diffusion}
studied the convergence of the dynamics (\ref{dynamics-v-soft}) itself on a
finite time horizon in the case when $a=\sigma = \textnormal{id}$ and $k=1$.
Closely related problems have also been studied
in~\cite{fatkullin2010,katzenberger1991,funaki1993}. 
Since we are mainly interested in sampling the invariant measure, 
we record the following known convergence result of the measure
$\mu^\epsilon$ to $\mu_1$. We omit its proof since it is a standard
application of the co-area formula. 
\begin{lemma}
  Let $f : \mathbb{R}^d \rightarrow \mathbb{R}$ be a bounded smooth function. 
  $\mu_\epsilon$ is the probability measure in (\ref{mu-eps}) and $\mu_1$ is the
  conditional probability measures on $\Sigma$ defined in (\ref{mu-1-2}).
  We have 
  \begin{align*}
    \lim_{\epsilon \rightarrow 0} \int_{\mathbb{R}^d} f(x)\, d\mu^\epsilon(x)
    = \frac{1}{Z} \int_{\Sigma} f(x) e^{-\beta U(x)}
    \big[\mbox{\textnormal{det}}(\nabla\xi^T
    \nabla\xi)(x)\big]^{-\frac{1}{2}}\,d\nu(x) =
    \int_{\Sigma} f(x)\,d\mu_1(x)\,,
  \end{align*}
  where $Z$ is the normalization constant given by 
  \begin{align*}
    Z = \int_{\Sigma} e^{-\beta U(x)} \big[\mbox{\textnormal{det}}(\nabla\xi^T
    \nabla\xi)(x)\big]^{-\frac{1}{2}} d\nu(x)\,.
  \end{align*}
  \label{lemma-mu-eps-to-mu1}
\end{lemma}

Lemma~\ref{lemma-mu-eps-to-mu1} suggests that the softly constrained
dynamics (\ref{dynamics-v-soft}) with a small $\epsilon$ is a good candidate to sample $\mu_1$ on $\Sigma$. However, direct simulation of
(\ref{dynamics-v-soft}) is probably inefficient when $\epsilon$ is small, 
because the time step-size in numerical simulations becomes 
limited due to the strong stiffness in the dynamics. 
The numerical scheme we will study below can be viewed as a multiscale numerical method for the
dynamics (\ref{dynamics-v-soft}). To explain the method, let us introduce the flow map 
$\varphi : \mathbb{R}^d \times [0, +\infty) \rightarrow \mathbb{R}^d$, defined
  by
\begin{align}
  \begin{split}
    \frac{d\varphi(x,s)}{ds} =& - (a\nabla F) \big(\varphi(x,s)\big)\,,\quad
    \varphi(x,0) = x, \qquad  \forall~x\in \mathbb{R}^d\,, 
\end{split}
  \label{phi-map}
\end{align}
where the function $F$ is 
\begin{align}
  F(x)=\frac{1}{2}|\xi(x)|^2=\frac{1}{2}\sum_{\alpha=1}^{k}\xi_\alpha^2(x)\,.
\label{fun-cap-f}
\end{align}
Under proper conditions~\cite{katzenberger1991,fatkullin2010}, one can define the limiting map of
$\varphi$ as
\begin{align}
\Theta(x) = \lim\limits_{s\rightarrow +\infty}
  \varphi(x,s)\,, \qquad \forall~ x \in \mathbb{R}^d\,.
  \label{theta-map}
\end{align}
Since $\nabla F|_{\Sigma}=0$ and $\Sigma$ is the set consisting of all global minima of $F$, it is clear
that $\Theta: \mathbb{R}^d \rightarrow \Sigma$ and $\Theta(x) = x$, for $\forall~x \in \Sigma$.

With the map $\Theta$, 
we propose to approximate the average $\overline{f}$ in (\ref{mean-f}) by 
\begin{align}
  \widehat{f}_n = \frac{1}{n} \sum_{l=0}^{n-1} f(x^{(l)}) \,,
  \label{fn-by-trajectory-avearge}
\end{align}
where $n$ is a large number and the states $x^{(l)}$ are sampled from the numerical scheme
\begin{align}
  \begin{split}
    x^{(l + \frac{1}{2})}_i =& x^{(l)}_i + \Big(-a_{ij}\frac{\partial U}{\partial
x_j}   + \frac{1}{\beta}\frac{\partial a_{ij}}{\partial x_j}\Big)\, h
  + \sqrt{2 \beta^{-1}h}\, \sigma_{ij} \eta^{(l)}_j\,, \quad 1 \le i \le
  d \,,\\
  x^{(l+1)} =& \Theta\big(x^{(l+\frac{1}{2})}\big)\,,
\end{split}
\label{micro-scheme-initial-repeat}
\end{align}
starting from $x^{(0)} \in \Sigma$. In (\ref{micro-scheme-initial-repeat}),  
$h>0$ is the time step-size, functions $a, \sigma, U$ are evaluated at
$x^{(l)}$, and $\bm{\eta}^{(l)}=(\eta_1^{(l)}, \eta_2^{(l)}, \cdots,
\eta_d^{(l)})^T$ are independent $d$-dimensional standard Gaussian random
variables, for $0 \le l < n-1$. 
\begin{remark}
  We make two comments about the scheme (\ref{fn-by-trajectory-avearge})--(\ref{micro-scheme-initial-repeat}).
  \begin{enumerate}
    \item
  Since the image of $\Theta$ is on $\Sigma$, the discrete dynamics
  $x^{(l)}$ stays on $\Sigma$ all the time. As in the case of the softly constrained
  dynamics (\ref{dynamics-v-soft}), the numerical scheme has the advantage that only the $1$st order
  derivatives of $\xi$ are needed.
    \item
  When $a=\mbox{id}$, the numerical scheme (\ref{micro-scheme-initial-repeat}) becomes 
\begin{align}
  \begin{split}
    x^{(l + \frac{1}{2})} =& x^{(l)}-\nabla U(x^{(l)})\,h + \sqrt{2 \beta^{-1}h}\, \bm{\eta}^{(l)}\,, \\
  x^{(l+1)} =& \Theta\big(x^{(l+\frac{1}{2})}\big)\,.
\end{split}
\label{micro-scheme-a-id}
\end{align}
    \end{enumerate}
  \label{rmk-on-numerical-scheme-1}
\end{remark}
  At each step $l \ge 0$, one needs to
  compute $\Theta(x^{(l+\frac{1}{2})})$. This can be done by
  solving the ODE (\ref{phi-map}) starting from $x^{(l+\frac{1}{2})}$, using 
  numerical integration methods such as Runge-Kutta methods.
  In the following remark, we discuss issues associated with the computation of the ODE flow map $\Theta$.
  \begin{remark}[Computation of the flow map $\Theta$]
    \begin{enumerate}
    \item
 Exploiting the gradient structure of the ODE (\ref{phi-map}), we can in fact
 establish exponential convergence of the dynamics $\varphi$ to its limit
 $\Theta$, at least in the neighborhood $\mathcal{O}$ of $\Sigma$. For instance, we refer to 
 \cite{katzenberger1991} and \cite[Chapter $4$]{ambrosio2005gradient}. 
 Here, for brevity, we point out that the exponential decay of $F(\varphi(x,s))$ can
 be easily obtained and is therefore a good candidate for the convergence criterion
 in numerical implementations. Actually, under Assumption~\ref{assump-1}--\ref{assump-2}, we can suppose 
 $$z^T\Psi(x)z \ge c_0|z|^2, \quad \forall z \in \mathbb{R}^k\,,\quad
 \forall~x\in \mathcal{O},$$ for some $c_0 >0$, where $\Psi = \nabla\xi^T a \nabla \xi$.
 Direct calculation gives 
 \begin{align}
   \frac{d F(\varphi(x,s))}{ds} = -(\xi_\alpha
   \Psi_{\alpha\eta}\,\xi_\eta)(\varphi(x,s)) \le
   -c_0|\xi(\varphi(x,s))|^2 = -2c_0 F(\varphi(x,s))\,,
   \label{inequality-of-F-decay}
 \end{align}
 which implies that $|\xi(\varphi(x,s))|^2= 2F(\varphi(x,s)) \le e^{-2c_0 s} |\xi(x)|^2$. 
 In practice, suppose that we choose the condition $|\xi(\varphi(x,s))| \le \epsilon_{tol}$ as the stop
	criterion of ODE solvers and set $\Theta(x) = \varphi(x,s_{ode})$ when the
	condition is met at the time $s_{ode}$.
 Then the above analysis indicates that we need to integrate the ODE (\ref{phi-map})
	until the time $s_{ode}=\max\big\{\frac{1}{c_0} \big(\ln |\xi(x^{(l+\frac{1}{2})})| +
 \ln \frac{1}{\epsilon_{tol}}\big),0\big\}$,
 which grows logarithmically as $\epsilon_{tol}\rightarrow 0$.
  Since $x^{(l+\frac{1}{2})}$ is likely to remain close to $\Sigma$ when $h$ is small, we
  can expect that $\Theta(x^{(l+\frac{1}{2})})$ can be computed up to
	sufficient accuracy with affordable numerical effort. 
      \item
  As a complement of the discussion above, 
	we point out that adaptivity techniques (e.g., using adaptive step-sizes) can be used to
	accelerate the computation of the flow map $\Theta$.
	For instance, instead of (\ref{phi-map}), we can consider the ODE 
\begin{align}
  \begin{split}
    \frac{d\bar{\varphi}(x,s)}{ds} =& - (a\nabla |\xi|^{2-\kappa}) \big(\bar{\varphi}(x,s)\big)\,,\quad
    \bar{\varphi}(x,0) = x,
\end{split}
  \label{phi-map-rescale}
\end{align}
where $0 \le \kappa < 1$. In fact, from the identity 
	\begin{align}
	  \nabla |\xi|^{2-\kappa} = (2-\kappa) |\xi|^{-\kappa} \nabla
	  \frac{|\xi|^2}{2} = (2-\kappa) \sum_{\alpha=1}^k
	  \frac{\xi_\alpha}{|\xi|^{\kappa}} \nabla \xi_\alpha, 
	  \label{rescale-identity}
	\end{align}
	we know that ODE (\ref{phi-map-rescale}) is related to ODE (\ref{phi-map})
	by a rescaling of the time $s$. Accordingly, for each $x$, the
	solution $\bar{\varphi}(x,\cdot)$ coincides
	with $\varphi(x,\cdot)$ after a reparametrization and therefore can be
	used to compute the projection $\Theta(x)$ as well. Furthermore, 
	similar to (\ref{inequality-of-F-decay}), in this case we have 
	\begin{align*}
	  \frac{d F(\bar{\varphi}(x,s))}{ds} = -(2-\kappa)\big[|\xi|^{-\kappa} (\xi_\alpha
	  \Psi_{\alpha\eta}\,\xi_\eta)\big](\bar{\varphi}(x,s)) \le
	  -c_0(2-\kappa) \big[2F(\bar{\varphi}(x,s))\big]^{1-\kappa/2}\,,
	\end{align*}
	from which we obtain $|\xi(\bar{\varphi}(x,s))|^{\kappa} \le |\xi(x)|^\kappa
	- 2c_0(2-\kappa)s$, and therefore $\bar{\varphi}(x,s)$ reaches the state
	$\Theta(x) \in \Sigma$ before the finite time $s_{ode} =
	\frac{|\xi(x)|^\kappa}{2c_0(2-\kappa)}$.

In applications, $\Theta(x)$ can be computed by solving the ODE (\ref{phi-map-rescale})
	with a proper $\kappa \in [0, 1)$ (and decreasing step-sizes).
	From the above discussion, in particular the identity
	(\ref{rescale-identity}), we know that 
	this is equivalent to solving the ODE
	(\ref{phi-map}) using adaptive step-sizes. 
	We refer to Examples $1$--$2$ in Section~\ref{sec-example}
	for numerical validation.
	\end{enumerate}
  \label{rmk-on-numerical-scheme-about-ode}
\end{remark}

Our main result of this section concerns the approximation quality of the
mean value $\overline{f}$ by the running average $\widehat{f}_n$ in (\ref{fn-by-trajectory-avearge}), in the case when $h$ is small and $n$ is large. 
For this purpose, it is necessary to study the properties of the limiting
flow map $\Theta$, since it is involved in 
the numerical scheme (\ref{micro-scheme-initial-repeat}).
In fact, we have the following important result, which characterizes the
derivatives of $\Theta$ by the projection map $P$ in \eqref{p-ij-brief-def}.
(We refer the reader to \eqref{p-ij}--\eqref{p-fact} in Appendix~\ref{app-sec-manifold} for properties of $P$.)
    \begin{prop}
      Let $\Theta$ be the limiting flow map in (\ref{theta-map}) and $P$ be the projection map in (\ref{p-ij-brief-def}). At each $x \in \Sigma$, we have 
    \begin{align}
      \begin{split}
      \frac{\partial \Theta_{i}}{\partial x_j} =& P_{ij}\,, \\
      a_{lr}\frac{\partial^2 \Theta_{i}}{\partial x_l\partial x_r} 
      =& \frac{\partial (Pa)_{il}}{\partial x_{l}} - 
      P_{il} \frac{\partial a_{lr}}{\partial x_{r}}\,,  
	\label{1st-2nd-d-theta}
      \end{split}
    \end{align}
for $~1 \le i,\,j \le d$. 
    \label{prop-map-phi-1st-2nd-derivative}
    \end{prop}
    The proof of Proposition~\ref{prop-map-phi-1st-2nd-derivative} can be found in Appendix~\ref{app-sec-numerical}.

Based on the above result, we are ready to quantify the approximation error between the
estimator $\widehat{f}_n$ and the mean value $\overline{f}$.
\begin{thm}
  Suppose that both the step-size $h$ and the number of the total steps $n$ are fixed.
  Assume that $f : \Sigma \rightarrow \mathbb{R}$ is a smooth function on
  $\Sigma$ and $\overline{f}$ is its mean value defined in (\ref{mean-f}) with respect to the measure $\mu_1$.
  Consider the running average $\widehat{f}_n$ in (\ref{fn-by-trajectory-avearge}),
  which is computed by simulating the numerical scheme (\ref{micro-scheme-initial-repeat}) with time step-size
  $h>0$. Let $T=nh$ and $C$ denote a generic positive constant that is independent of
  $h$, $n$. We have the following approximation results. 
  \begin{enumerate}
  \item
    $\big|\mathbf{E}\widehat{f}_n - \overline{f}\big| \le C(h+ \frac{1}{T})$.
  \item
    $\mathbf{E}\big|\widehat{f}_n - \overline{f}\big|^2 \le C(h^2 + \frac{1}{T})$.
  \item
    For any $0 < \epsilon < \frac{1}{2}$, there is an almost surely bounded positive random variable $\zeta(\omega)$, such that
      $|\widehat{f}_n - \overline{f}| \le Ch+ \frac{\zeta(\omega)}{T^{1/2-\epsilon}}$\,, almost surely.
  \end{enumerate}
  \label{thm-estimate-scheme-on-submanifold}
\end{thm}
We present the proof of Theorem~\ref{thm-estimate-scheme-on-submanifold} in
Appendix~\ref{app-sec-numerical}, since it is technical and the idea follows
the standard approach developed in~\cite{conv-time-averaging}, where Poisson
equation played a crucial role. However, let us emphasize that, in contrast to
\cite{conv-time-averaging}, in the current setting we are working on the
submanifold $\Sigma$ and, furthermore, the map $\Theta$ is involved in our
numerical scheme. In particular, in the proof we use the Poisson equation
related to the generator $\mathcal{L}$ in (\ref{generator-l-mu1-intro}) of the process \eqref{dynamics-1-submanifold-intro},
based on the fact that $\mu_1$ is the invariant measure of
\eqref{dynamics-1-submanifold-intro} (This is the place where
Theorem~\ref{thm-mu1-mu2} in Section~\ref{sec-n-rn} is used in order
to establish Theorem~\ref{thm-estimate-scheme-on-submanifold}).
\begin{remark}
  Theorem~\ref{thm-estimate-scheme-on-submanifold} concerns the long time
  behavior of the scheme (\ref{micro-scheme-initial-repeat}) with a small
  step-size, i.e., large $T$ and small $h$. This is often relevant in molecular dynamics simulations.
While the estimates of Theorem~\ref{thm-estimate-scheme-on-submanifold} are
  stated in terms of the variables $h$
  and $T$, we should point out that the time $T=nh$ and therefore it depends on both $h$ and $n$.
  Alternatively (and more precisely), the estimates can be expressed using the independent
  variables $h$ and $n$. For instance, for the mean square error estimate, we have 
  \begin{align}
    \mathbf{E}\big|\widehat{f}_n - \overline{f}\big|^2 \le C\Big(h^2 + \frac{1}{nh}\Big)\,.
    \label{thm-estimate-scheme-by-h-n}
  \end{align}
  Therefore, for a fixed (large) total sample number $n$,
  we can conclude that the optimal upper bound in (\ref{thm-estimate-scheme-by-h-n})
  is $\mathcal{O}(n^{-\frac{2}{3}})$ and is achieved when $h=\mathcal{O}(n^{-\frac{1}{3}})$.
  We refer to~\cite{conv-time-averaging} for related discussions.
  \label{explain-the-estimate-of-thm}
\end{remark}
In applications, the conditional probability measure $\mu_1$ often satisfies the
following Poincar{\'e} inequality~\cite{zhang-lelievre-pathwise2018}
       \begin{align}
	 \mbox{Var}_{\mu_1}(f) := \int_{\Sigma} (f-\overline{f})^2\,d\mu_1 
	 \le -\frac{1}{K} \int_{\Sigma} (\mathcal{L}f) f\,d\mu_1 =
	 \frac{1}{K\beta} \int_{\Sigma} (Pa \nabla f)\cdot \nabla f\,d\mu_1 \,,
	 \label{poincare-inequality-mu1}
       \end{align}
       for all $\,f : \Sigma\rightarrow \mathbb{R}$ such that the right hand
       side of the above inequality is finite, where $K>0$ is the  
Poincar{\'e} constant, $\mathcal{L}$ is the infinitesimal generator 
(\ref{generator-l-mu1-intro}), and the identity (\ref{integration-by-part-mu1}) in
Remark~\ref{rmk-3} has been used. Under this condition, the mean square error
estimate in Theorem~\ref{thm-estimate-scheme-on-submanifold} can be 
improved (i.e., the constant in front of the $\mathcal{O}(T^{-1})$ term 
is small when $K$ is large) and we have the following corollary (The proof is
in Appendix~\ref{app-sec-numerical}).
\begin{corollary}
  Under the same assumptions in
  Theorem~\ref{thm-estimate-scheme-on-submanifold} and further assuming that 
  $\mu_1$ satisfies the Poincar{\'e} inequality
  (\ref{poincare-inequality-mu1}), we have 
  \begin{align*}
    \mathbf{E}\big|\widehat{f}_n - \overline{f}\big|^2 \le  
    \frac{2C_1 \mbox{\textnormal{Var}}_{\mu_1}(f)}{KT} + C_2\Big(h^2 +
    \frac{h}{T}+\frac{1}{T^2}\Big)\,,
  \end{align*}
  where $C_1$ is any constant larger than $1$, the constant $C_2$ 
  depends on the choice of $C_1$ but is independent of both $h$ and $n$.
  \label{corollary-on-spectral-gap}
\end{corollary}
\begin{remark}[Non-reversible schemes]
  The idea of using the map $\Theta$ in the constraint step of the numerical
  scheme (\ref{micro-scheme-initial-repeat}) is motivated by the softly
  constrained (reversible) dynamics (\ref{dynamics-v-soft}). It is natural to
  consider whether certain ``non-reversible'' numerical scheme can
  be obtained using the same idea. In fact, let $A\in \mathbb{R}^{d\times
  d}$ be a constant skew-symmetric matrix such that $A^T=-A$. The softly constrained
  (non-reversible) dynamics 
  \begin{equation}\label{dynamics-v-soft-non-reversible}
\begin{aligned}
  d X^{\epsilon,A, i}_s & = \bigg[A_{ij} \frac{\partial}{\partial
x_j}\Big(U+\frac{1}{2\epsilon}\sum\limits_{\alpha=1}^k\xi_\alpha^2\Big)\,- a_{ij} \frac{\partial U}{\partial
x_j}\, -
\frac{1}{\epsilon}a_{ij}\frac{\partial }{\partial x_j}\Big(\frac{1}{2}\sum\limits_{\alpha=1}^k
  \xi_\alpha^2\Big)\,+
\frac{1}{\beta} \frac{\partial a_{ij}}{\partial x_j}\bigg](X^{\epsilon,A}_s)\, ds
\\
& + \sqrt{2\beta^{-1}} \sigma_{ij}(X^{\epsilon,A}_s)\, dW^j_s\,, 
\end{aligned}
\end{equation}
indeed has the same invariant measure $\mu^{\epsilon}$ in (\ref{mu-eps}).
Based on this fact, a reasonable guess of the ``non-reversible'' numerical
scheme that samples the conditional measure $\mu_1$ is 
the multiscale method of (\ref{dynamics-v-soft-non-reversible}), i.e.,
\begin{align}
  \begin{split}
    x^{(l + \frac{1}{2})}_i =& x^{(l)}_i + \Big(A_{ij}\frac{\partial U}{\partial x_j}  
    -a_{ij} \frac{\partial U}{\partial x_j}   + \frac{1}{\beta}\frac{\partial
    a_{ij}}{\partial x_j}\Big)(x^{(l)})\, h
  + \sqrt{2 \beta^{-1}h}\, \sigma_{ij}(x^{(l)})\, \eta^{(l)}_j\,, \quad 1 \le i \le
  d \,,\\
  x^{(l+1)} =& \Theta^{A}\big(x^{(l+\frac{1}{2})}\big)\,,
\end{split}
\label{scheme-non-reversible}
\end{align}
where $\Theta^{A}(x)=\lim\limits_{s\rightarrow +\infty} \varphi^{A}(x,s)$ is the limit of the (non-gradient) flow map 
\begin{align}
  \begin{split}
    \frac{d\varphi^{A}(x,s)}{ds} =& -((a-A)\nabla F\big)\big(\varphi^{A}(x,s)\big)\,,\quad
    \varphi^{A}(x,0) = x, \qquad  \forall~x\in \mathbb{R}^d\,, 
\end{split}
  \label{phi-map-a}
\end{align}
with the same function $F$ in (\ref{fun-cap-f}).
We expect that the long time sampling error estimates of the numerical scheme
(\ref{scheme-non-reversible}) can be studied following the same approach of this
section as well.
For this purpose, however, it is necessary to handle the non-gradient term in the ODE (\ref{phi-map-a}),
which brings difficulties when calculating the derivatives of the map
$\Theta^{A}$ (cf. Proposition~\ref{prop-map-phi-1st-2nd-derivative} as well as
its proof in Appendix~\ref{app-sec-numerical}). We will postpone the analysis in the future work
and readers are referred to Example $1$ in Section~\ref{sec-example} for numerical
validation of the scheme (\ref{scheme-non-reversible}--(\ref{phi-map-a}).
   \label{rmk-conjecture-non-reversible-scheme}
\end{remark}

In the literature, reversible Metropolis random walk on submanifold~\cite{goodman-submanifold} and Hybrid Monte Carlo algorithm~\cite{tony-gabriel-hmc-submanifolds} 
have been proposed to sample distributions on submanifolds. A comprehensive
comparison between our scheme (without Metropolis step) and these Metropolis-adjusted
approaches is a complicate task and goes beyond the scope of the current paper. 
We only discuss this issue briefly in the following remark. 
\begin{remark}
  [Comparison to the Metropolis-adjusted samplers~\cite{goodman-submanifold,tony-gabriel-hmc-submanifolds} on submanifolds]
  We compare the following three aspects.  
  \begin{enumerate}
    \item
      \underline{Constraint step}.  In the scheme
      (\ref{micro-scheme-initial-repeat}), the map $\Theta$ in (\ref{theta-map}) is
      used to project the state $x^{(l+\frac{1}{2})}$ to the submanifold $\Sigma$.
      Under mild assumptions, the gradient structure of the ODE (\ref{phi-map})
allows to define $\Theta$ for all states at which the map $\xi$ is $C^2$
      smooth (Assumption~\ref{assump-2}).
      Differently, in~\cite{goodman-submanifold,tony-gabriel-hmc-submanifolds}, newly proposed states
      are projected back to the submanifold by solving a nonlinear system. 
      One usually uses Newton's method to find the solution of the
      system, with the hope that the convergence can be achieved within a few
      iteration steps (success), thanks to the quadratic convergence rate of Newton's method.
      In practice, however, it may happen that either the solution does not
      exist or, even if the solution exists, the Newton's method does not
      converge (due to local convergence). In these two cases, the constraint step
      ends without finding a new state on the submanifold (no success). Although the Markov chain samples the correct invariant distribution regardless whether the constraint step is
      successful or not~\cite{goodman-submanifold}, the sampling efficiency is
      affected by the success rate of the constraint step. 
    \item \underline{Computational complexity}.
      Suppose the computational complexity of evaluating the $d\times k$ matrix
      $\nabla\xi$ is $\mathcal{O}(k\cdot d)$ and $n$ states are sampled in total.
      In each step of both the scheme (\ref{micro-scheme-initial-repeat}) and the Metropolis-adjusted
      methods in~\cite{goodman-submanifold,tony-gabriel-hmc-submanifolds}, the
      major computational effort is devoted to the constraint step, i.e.,
      either computing the map $\Theta$ by integrating ODE or solving equations using Newton's method. 
      For the scheme (\ref{micro-scheme-initial-repeat}) with $a=\mbox{id}$, the overall
      computational complexity of the constraint step is therefore $\mathcal{O}(n
      \cdot k \cdot d\cdot s_{ode}/\Delta s)$, where $s_{ode}$  and $\Delta s$
      are the average final time (see
      Remark~\ref{rmk-on-numerical-scheme-about-ode}) and the average
      step-size in the ODE integration, respectively. 
      For the Metropolis-adjusted methods in \cite{goodman-submanifold,tony-gabriel-hmc-submanifolds},
in each Newton iteration it is necessary to compute the matrix product $\nabla\xi^T(x)\nabla\xi(x')$ for two states $x,x'$. 
      Therefore, the overall computational complexity is $\mathcal{O}(n\cdot
      k^2\cdot d\cdot N_{iter})$, where $N_{iter}$ is the average total
      iteration steps of Newton's method.
      In practice, one can implement the method in~\cite{goodman-submanifold} in a way such that Newton's method 
      ends within a few Newton iterations, e.g., $N_{iter} \le 10$.  On the
      other hand, the ODE integration in the scheme (\ref{micro-scheme-initial-repeat}) requires more
      iteration steps (for the examples in Section~\ref{sec-example}, $20-40$ steps are needed), i.e., $s_{ode}/\Delta s \ge N_{iter}$.
      However, when comparing the computational cost of both methods, we
      should keep in mind that an ODE iteration step is generally
      cheaper than a Newton iteration step, since the latter involves 
      both matrix-matrix multiplication and solving linear systems. (The
      cost of solving $k\times k$ linear systems is not major and therefore for
      simplicity is not included in the estimation above.)
      While the computational cost of Newton's method is smaller when $k$ is
      small, the ODE integration becomes faster for medium or large $k$.
      We refer to Example $2$ in Section~\ref{sec-example} for
      numerical comparison on the computational cost of both methods.
    \item \underline{Choice of step-size}.
      To apply the scheme (\ref{micro-scheme-initial-repeat}), one usually
      chooses a suitably small step-size $h$ and runs the scheme for sufficient many steps (large $n$).
      In concrete applications, one often needs to tune the step-size $h$,
      keeping in mind that a large $h$ will lead to large bias, while a unnecessarily small
      $h$ will result in large correlations. See Remark~\ref{explain-the-estimate-of-thm}. 
      For the Metropolis-adjusted
      methods~\cite{goodman-submanifold,tony-gabriel-hmc-submanifolds}, the
      choice of the step-size (in the proposal step) is in fact a more
      delicate issue. Although the Markov chain remains unbiased for large
      step-sizes, the sampling efficiency will be possibly limited due to a low
      acceptance rate in the Metropolis step. This issue has been discussed
      in~\cite{tony-gabriel-hmc-submanifolds}, where the performance 
      with different step-sizes has been numerically investigated.
      Besides the acceptance rate in the Metropolis step, the success rate of the 
      constraint step also depends on the step-size used in the proposal step.
      Taking the $(d-1)$-dimensional unit sphere $\mathbb{S}^{d-1}$ as an
      example, it is not difficult to see that there won't be corresponding
      projected state on the sphere (i.e., the solution of the constraint
      equation does not exist), if
      the norm of the tangent vector generated in the proposal step
      (see~\cite{goodman-submanifold}) is larger than one.
      This implies that the success rate of the constraint step will decrease
      when we increase the step-size in the proposal step.
      To summarize, it is important to choose the step-size in the
      Metropolis-adjusted method in~\cite{goodman-submanifold,tony-gabriel-hmc-submanifolds} properly, such that both the acceptance rate in the
      Metropolis step and the success rate of the constraint step are not too small.
  \end{enumerate}
  \label{rmk-cmp-metropolis-or-not}
\end{remark}

Before concluding, let us point out that the approach used in the above proof
allows us to study other numerical schemes on $\Sigma$ as well. As an example,
we consider the projection from $\mathbb{R}^d$ to $\Sigma$ along geodesic
curves (instead of using the flow map \eqref{phi-map}--\eqref{theta-map}) defined by the metric $g=a^{-1}$ in (\ref{ip-a-intro}), i.e., the metric on $\mathcal{M}=(\mathbb{R}^d, g)$.
Let $\mathrm{d}$ be the distance function on $\mathbb{R}^d$ induced by the
metric $g$ in (\ref{ip-a-intro}). We introduce the projection function
\begin{align}
  \Pi(x) = \Big\{y\,\Big|\, \mathrm{d}(x,y) = \mathrm{d}(x,\Sigma),\, y \in \Sigma
  \Big\}\,,\quad \forall~x \in \mathbb{R}^d\,.
  \label{projection-map-pi}
\end{align}
Clearly, we have $\Pi|_{\Sigma}=\mbox{\textnormal{id}}|_\Sigma$. Given any $x \in \Sigma$, there is a
neighborhood $\Omega \subset \mathbb{R}^d$ of $x$ such that $\Pi|_\Omega$ is a single-valued map.
Furthermore, applying 
inverse function theorem, we can verify that $\Pi$ is smooth on $\Omega$.
Similar to Proposition~\ref{prop-map-phi-1st-2nd-derivative}, we need the following result which connects the derivatives of $\Pi$ to the
projection map $P$ in (\ref{p-ij-brief-def}). (Note that, comparing to the
derivatives of the map $\Theta$ in \eqref{1st-2nd-d-theta}, there is an extra term in the second equation of
\eqref{1st-2nd-d-pi}.)  Its proof is given in Appendix~\ref{app-sec-numerical}.
\begin{prop}
  Let $\Pi = (\Pi_1, \Pi_2, \cdots, \Pi_d)^T: \mathbb{R}^d \rightarrow \Sigma$ be the projection function in
  (\ref{projection-map-pi}), where $\Pi_i : \mathbb{R}^d \rightarrow
 \mathbb{R}$ are smooth functions, $1\le i \le d$. 
 For $x \in \Sigma\cap \Omega$, we have 
 \begin{align}
   \begin{split}
   & \frac{\partial \Pi_i}{\partial x_j} = P_{ij}\,, \\
   &a_{lr} \frac{\partial^2 \Pi_i}{\partial x_l\partial x_r}  
     = 
      - P_{il} \frac{\partial a_{lr}}{\partial x_r}
      + \frac{\partial (Pa)_{il}}{\partial x_l}
      +
      \frac{1}{2} (Pa)_{il} \frac{\partial \ln \mbox{\textnormal{det}}\Psi}{\partial
	x_l}\,,
   \end{split}
   \label{1st-2nd-d-pi}
 \end{align}
for $1 \le i,j \le d$, where $\Psi = \nabla\xi^T a \nabla \xi$.
 \label{prop-map-pi}
\end{prop}
Now we are ready to study the numerical scheme 
\begin{align}
  \begin{split}
    x^{(l + \frac{1}{2})}_i =& x^{(l)}_i + \Big(-a_{ij}\frac{\partial U}{\partial
  x_j}   + \frac{1}{\beta}\frac{\partial a_{ij}}{\partial x_j}\Big)(x^{(l)})\, h
  + \sqrt{2 \beta^{-1}h}\, \sigma_{ij}(x^{(l)})\, \eta^{(l)}_j\,, \quad 1 \le i \le
  d \,,\\
  x^{(l+1)} =& \Pi\big(x^{(l+\frac{1}{2})}\big)\,,
\end{split}
\label{micro-scheme-pi}
\end{align}
where $x^{(0)} \in \Sigma$, and the map $\Pi$ (instead of $\Theta$) is used in each step to project the states $x^{(l+\frac{1}{2})}$ back to $\Sigma$.
\begin{thm}
  Assume that $f : \Sigma \rightarrow \mathbb{R}$ is a smooth function on
  $\Sigma$ and $\overline{\overline{f}}$ is its mean value 
  \begin{align*}
    \overline{\overline{f}} = \int_{\Sigma} f(x)\,d\mu(x)\,,
  \end{align*}
  with respect to the probability measure 
  \begin{align} 
    d\mu=\frac{1}{Z} e^{-\beta U} \sqrt{\frac{\det (\nabla\xi^T a
    \nabla\xi)}{\det (\nabla\xi^T \nabla \xi)}}\, d\nu\,.
    \label{mu3-pi}
\end{align}
  Consider the running average $\widehat{f}_n$ in (\ref{fn-by-trajectory-avearge}),
  which is computed by simulating the numerical scheme (\ref{micro-scheme-pi}) with time step-size
  $h>0$. Let $T=nh$ and $C$ denote a generic positive constant that is independent of
  $h$, $T$. We have the following approximation results. 
  \begin{enumerate}
  \item
    $\big|\mathbf{E}\widehat{f}_n - \overline{\overline{f}}\big| \le C(h+ \frac{1}{T})$.
  \item
    $\mathbf{E}\big|\widehat{f}_n - \overline{\overline{f}}\big|^2 \le C(h^2 + \frac{1}{T})$.
  \item
    For any $0 < \epsilon < \frac{1}{2}$, there is an almost surely bounded positive random variable $\zeta(\omega)$, such that
    $|\widehat{f}_n - \overline{\overline{f}}| \le Ch+ \frac{\zeta(\omega)}{T^{1/2-\epsilon}}$\,, almost surely.
  \end{enumerate}
  \label{thm-estimate-scheme-on-submanifold-pi}
\end{thm}
We omit the proof since it resembles the proof of Theorem~\ref{thm-estimate-scheme-on-submanifold}.
\begin{remark}
  For the projection map $\Pi$ induced by a general metric $g=a^{-1}$
  or, equivalently, by a general (positive definite) matrix $a$,
  implementing the numerical scheme (\ref{micro-scheme-pi}) is not as easy as
  the numerical scheme (\ref{micro-scheme-initial-repeat}). We decide to omit
  the algorithmic discussions, due to the fact that the probability measure
  (\ref{mu3-pi}) seems less relevant in applications.
  However, it is meaningful to point out that, when $a=\mbox{id}$, the above result is relevant to the one in~\cite{projection_diffusion}. In this case, the probability
  measure in (\ref{mu3-pi}) reduces to $\mu_2=\frac{1}{Z} e^{-\beta U}d\nu$ in (\ref{mu-1-2}) and the numerical scheme
  (\ref{micro-scheme-pi}) can be formulated equivalently using
  Lagrange multiplier. We refer to~\cite{projection_diffusion,Tony-constrained-langevin2012} for comprehensive numerical details. 
\end{remark}
\section{Numerical examples}
\label{sec-example}
In this section, we study three concrete examples. In the first example, we investigate the different schemes in Section~\ref{sec-numerical-scheme-mu1}. In particular, the sampling performance of the constrained schemes using different maps $\Theta$, $\Theta^A$, and $\Pi$, as well as the performance of the unconstrained Euler-Maruyama discretization of the SDE (\ref{dynamics-1-submanifold}), will be compared.
In the second example, we compare the computational costs of the scheme (\ref{micro-scheme-initial-repeat})
and the Metropolis-adjusted algorithm introduced in~\cite{goodman-submanifold}.
In the last example, we show that in some cases it is helpful to consider non-constant matrices $\sigma$ and $a$.
The C/C++ code used for producing the numerical results in the following examples is available at: \url{https://github.com/zwpku/sampling-on-levelset}.
\subsubsection*{Example 1: Comparison of schemes using different projection maps}
Let us define $\xi: \mathbb{R}^2\rightarrow \mathbb{R}$ by 
\begin{align*}
  \xi(x) = \frac{1}{2}\Big(\frac{x_1^2}{c^2}+x_2^2 -1\Big)\,, \quad \forall~x = (x_1, x_2)^T \in \mathbb{R}^2\,,
\end{align*}
with the constant $c=3$. The level set $\Sigma=\big\{(x_1,
x_2)^T~|~\frac{x_1^2}{c^2}+x_2^2=1\big\}$ is an ellipse in $\mathbb{R}^2$.
We have $\nabla\xi = (\frac{x_1}{c^2},x_2)^T$ and therefore
$\det(\nabla\xi^T\nabla\xi) = |\nabla\xi|^2 = \frac{x_1^2}{c^4} + x_2^2$.
For simplicity, we choose the potential $U=0$ and the matrices
$a=\sigma=\mbox{id}\in\mathbb{R}^{2\times 2}$.
The two probability measures in (\ref{mu-1-2}) on $\Sigma$ are 
\begin{align*}
  d\mu_1 = \frac{1}{Z}\,\Big(\frac{x_1^2}{c^4} + x_2^2\Big)^{-\frac{1}{2}} d\nu\,,\quad 
  d\mu_2 = \frac{1}{Z}\, d\nu\,,
\end{align*}
where $Z$ denotes two different normalization constants and $\nu$ is the
surface measure on $\Sigma$. Since $\Sigma$ is a one-dimensional manifold, it
is helpful to consider the parametrization of $\Sigma$ by
\begin{align}
  x_1 = c\cos\theta\,,\quad x_2 = \sin\theta\,,
  \label{ex1-sigma-by-angle-theta}
\end{align}
where the angle $\theta \in [0, 2\pi]$.
 Applying the chain rule $\frac{\partial}{\partial \theta}
= -c\sin\theta\frac{\partial}{\partial x_1} + \cos\theta \frac{\partial}{\partial
x_2}$, we can obtain the expressions of $\mu_1$, $\mu_2$  under this coordinate as 
\begin{align}
  d\mu_1 = \frac{1}{Z} d\theta\,,\quad d\mu_2 = \frac{1}{Z} \big(c^2\sin^2\theta +
  \cos^2\theta\big)^{\frac{1}{2}}\,d\theta\,.
  \label{ex1-mu1-mu2-theta}
\end{align}
With these preparations, we proceed to study the following four numerical approaches.
\begin{enumerate}
  \item
    \underline{Numerical scheme (\ref{micro-scheme-initial-repeat}) using $\Theta$}. Since $U\equiv 0$ and $a=\mbox{id}$, 
(\ref{micro-scheme-initial-repeat}) becomes  
\begin{align}
  \begin{split}
    x^{(l + \frac{1}{2})} =& x^{(l)} + \sqrt{2 \beta^{-1}h}\,  \bm{\eta}^{(l)}\,, \\
  x^{(l+1)} =& \Theta\big(x^{(l+\frac{1}{2})}\big)\,,
\end{split}
\label{ex1-scheme-0}
\end{align}
where $\Theta(x)$ is the limit of the flow map $\varphi$, given by 
\begin{align}
  \dot{y}_1(s) = -\frac{\xi\big(y(s)\big)\,y_1(s)}{c^2}\,,\quad
  \dot{y}_2(s) = -\xi(y(s))\,y_2(s)\,, \qquad s \ge 0\,,
  \label{ex1-scheme-0-flow}
\end{align}
starting from $y(0) = x$.
  \item
    \underline{Numerical scheme (\ref{scheme-non-reversible})--(\ref{phi-map-a}) using $\Theta^{A}$}.
    Let us choose the skew-symmetric matrix  
    \begin{align}
      A=
    \begin{pmatrix}
      0 & 1/2 \\
      -1/2 & 0
    \end{pmatrix}.
      \label{ex1-scheme-1-matrix-a}
    \end{align}
    Since $U\equiv 0$ and $a=\mbox{id}$, we have 
\begin{align}
  \begin{split}
    x^{(l + \frac{1}{2})} =& x^{(l)} + \sqrt{2 \beta^{-1}h}\,  \bm{\eta}^{(l)}\,, \\
  x^{(l+1)} =& \Theta^A\big(x^{(l+\frac{1}{2})}\big)\,,
\end{split}
\label{ex1-scheme-0-a}
\end{align}
where $\Theta^A(x)$ is the limit of the flow map $\varphi^A$, given by
\begin{align}
  \dot{y}_1(s) = -\xi\big(y(s)\big)\,\Big(\frac{y_1(s)}{c^2} - \frac{y_2(s)}{2}\Big) \,,\quad
  \dot{y}_2(s) = -\xi\big(y(s)\big)\,\Big(\frac{y_1(s)}{2c^2} + y_2(s)\Big)\,, \qquad s \ge 0\,,
  \label{ex1-scheme-0-flow-a}
\end{align}
starting from $y(0) = x$.
  \item
    \underline{Numerical scheme (\ref{micro-scheme-pi}) using $\Pi$}.
Similarly, since $U\equiv 0$ and $a=\mbox{id}$, (\ref{micro-scheme-pi}) becomes  
\begin{align}
  \begin{split}
    x^{(l + \frac{1}{2})} =& x^{(l)} + \sqrt{2 \beta^{-1}h}\,  \bm{\eta}^{(l)}\,, \\
  x^{(l+1)} =& \Pi\big(x^{(l+\frac{1}{2})}\big)\,,
\end{split}
\label{ex1-scheme-1}
\end{align}
where $\Pi$ is the projection map onto $\Sigma$, defined in (\ref{projection-map-pi}). 
  \item
    \underline{Euler-Maruyama discretization of the SDE
    (\ref{dynamics-1-submanifold})}. 
    Notice that we have $Pa = P$, and it is straightforward to compute 
    \begin{align*}
      P_{11} = \frac{c^4x_2^2}{x_1^2+c^4x_2^2}\,,\quad 
      P_{12} = P_{21} = -\frac{c^2x_1x_2}{x_1^2+c^4x_2^2}\,,\quad 
      P_{22} = \frac{x_1^2}{x_1^2+c^4x_2^2}\,.
    \end{align*}
  Therefore, discretizing (\ref{dynamics-1-submanifold}), we obtain 
\begin{align}
  \begin{split}
    x^{(l + 1)}_1 =& x^{(l)}_1 + \frac{1}{\beta}
    \frac{c^4(c^2-2) x_1x_2^2-c^2x_1^3}{(x_1^2+c^4x_2^2)^2}\,h 
    + \sqrt{2 \beta^{-1}h}\, 
    \Big(\frac{c^4x_2^2}{x_1^2+c^4x_2^2}\,\eta^{(l)}_1
    -\frac{c^2x_1x_2}{x_1^2+c^4x_2^2}\,\eta^{(l)}_2\Big) \\
    x^{(l + 1)}_2 =& x^{(l)}_2 + \frac{1}{\beta}
    \frac{(1-2c^2)x_1^2x_2-c^6x_2^3}{(x_1^2+c^4x_2^2)^2}\,h 
    + \sqrt{2 \beta^{-1}h}\, 
    \Big(
    -\frac{c^2x_1x_2}{x_1^2+c^4x_2^2}\,\eta^{(l)}_1
    +\frac{x_1^2}{x_1^2+c^4x_2^2}\,\eta^{(l)}_2 \Big)\,.
\end{split}
\label{ex1-scheme-2}
\end{align}
\end{enumerate}

Based on Theorems~ \ref{thm-mu1-mu2}, \ref{thm-estimate-scheme-on-submanifold-pi},\, and
Remark~\ref{rmk-conjecture-non-reversible-scheme}, we
study the performance of the schemes (\ref{ex1-scheme-0}),  
(\ref{ex1-scheme-0-a}), and (\ref{ex1-scheme-2}) in sampling the conditional
measure $\mu_1$, as well as the performance of the scheme (\ref{ex1-scheme-1})
in sampling the measure $\mu_2$. 

In the numerical experiment, we choose $\beta=1.0$ in each of the above schemes.
For the first scheme using $\Theta$, we simulate (\ref{ex1-scheme-0}) 
for $n=2\times 10^{7}$ steps with the step-size $h=0.01$. In each constraint step, 
$\Theta(x^{(l+\frac{1}{2})})$ is computed by solving the ODE
(\ref{ex1-scheme-0-flow}) starting from $y(0) = x^{(l+\frac{1}{2})}$ until
the time when $|\xi(y(s))| < 10^{-8}$ is satisfied, using the $3$rd order (Bogacki-Shampine) Runge-Kutta
(RK) method.  
The adaptivity technique in the second point of Remark~\ref{rmk-on-numerical-scheme-about-ode} is used with $\kappa=0.5$. 
The step-size for solving the ODE is set to $\Delta s = 0.1$ initially and
is divided by $2.0$ whenever we find that the value of $|\xi|$ is not decreasing.
(The numerical error of $\Theta$ is $3.7 \times 10^{-7}$ on average, comparing to the
reference solution that is obtained by solving the ODE with $\kappa=0$ and
the fixed step-size $\Delta s= 0.001$.)
On average, we observe that $25$ iterations of the RK method are needed in
each constraint step in order to meet the criterion $|\xi(y(s))| < 10^{-8}$.

For the second scheme using $\Theta^A$, we simulate (\ref{ex1-scheme-0-a}) 
for $n=2\times 10^{7}$ steps with the step-size $h=0.005$. 
Notice that, a slightly smaller step-size $h$ is used, because in this case the non-gradient ODE flow 
(\ref{ex1-scheme-0-flow-a}) produces a drift force on the level set $\Sigma$.
In each constraint step, $\Theta^{A}(x^{(l+\frac{1}{2})})$ is computed by solving the ODE
(\ref{ex1-scheme-0-flow-a}) in the same way (with the same parameters) as we did in the first scheme.
On average, we find that $23$ iterations of the RK method are needed in
each constraint step in order to meet the criterion $|\xi(y(s))| < 10^{-8}$.

For the third scheme using $\Pi$, (\ref{ex1-scheme-1}) is simulated for $n=2\times 10^{7}$ steps with the step-size $h=0.01$. Using the parametrization (\ref{ex1-sigma-by-angle-theta}), we have $\Pi(x) = (c\cos\theta^*, \sin\theta^*)^T$, where 
\begin{align}
  \theta^* = \argmin_{\theta \in [0, 2\pi]} \Big((x_1-c\cos\theta)^2 +
  (x_2-\sin \theta)^2\Big)\,, \quad x=(x_1,x_2)^T\,.
  \label{ex1-scheme-1-opt}
\end{align}
Therefore, in each step, $\Pi(x^{(l+\frac{1}{2})})$ is computed by solving (\ref{ex1-scheme-1-opt}) 
using the simple gradient descent method. The step-size is fixed to $\Delta t
= 0.1$ and the gradient descent iteration terminates when the derivative of the objective
function in (\ref{ex1-scheme-1-opt}) has an absolute value that is less than $10^{-8}$.
On average, it requires $32$ gradient descent iterations in each step in order to meet the convergence criterion.

Let us make a comparison among the three schemes (\ref{ex1-scheme-0}), (\ref{ex1-scheme-0-a}) and (\ref{ex1-scheme-1}).
From Figure~\ref{fig-cmp-theta-pi} and Figure~\ref{fig-vector-field-theta-pi}, 
we can see that the three maps $\Theta$, $\Theta^A$ and $\Pi$ indeed have different effects.
Roughly speaking, comparing to the projection map $\Pi$, both $\Theta$
and $\Theta^A$ tend to map states towards one of the two vertices $(\pm c,
0)$, where $|\nabla\xi|$ are smaller, while $\Pi^A$ introduces a further rotational force on $\Sigma$.
Based on the states generated from these three schemes,
in Figure~\ref{fig-dist-and-traj} we show the empirical probability densities of the parameter $\theta$ in (\ref{ex1-sigma-by-angle-theta}).  
From the agreement between the empirical densities and the densities
computed from the analytical expressions in (\ref{ex1-mu1-mu2-theta}), we can
make the conclusion that 
the trajectories generated from the two schemes using $\Theta$ and $\Theta^A$ indeed sample the
probability measure $\mu_1$, while the trajectory generated from the scheme
using $\Pi$ samples $\mu_2$. 

Lastly, concerning the fourth scheme, we simulate (\ref{ex1-scheme-2}) for $n=10^7$
steps using the step-size $h = 0.0001$.
In this case, we find that it is necessary to choose a small step-size $h$ in order to keep the trajectory close to the level set $\Sigma$.
As can be seen from Figure~\ref{fig-em-scatter}, 
even with this smaller step-size $h=0.0001$,
the generated trajectory departs from the level set $\Sigma$. This
indicates the limited usefulness of the direct Euler-Maruyama
discretization of the SDE (\ref{dynamics-1-submanifold}) in long time simulations.
\begin{figure}[htpb]
  \centering
  \includegraphics[width=13cm]{./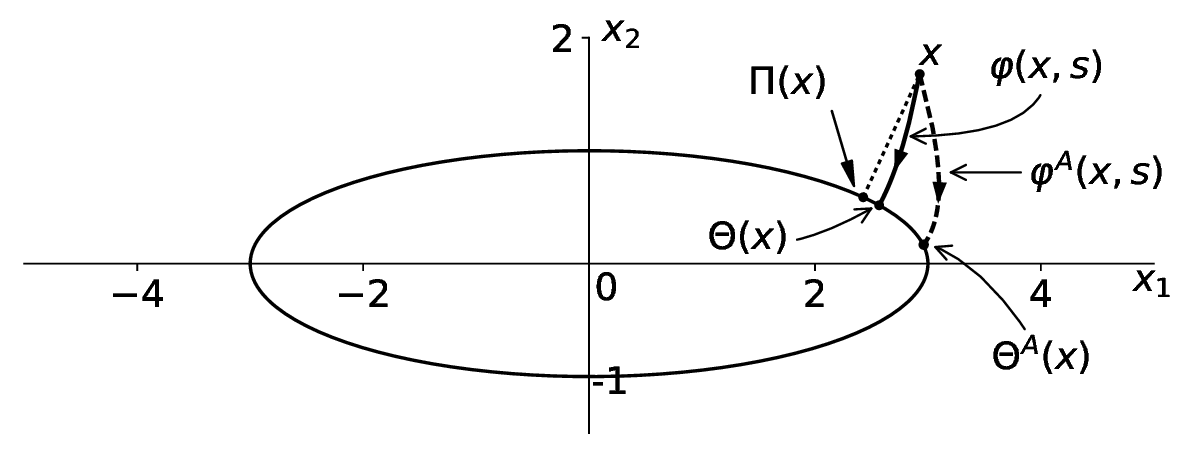}
  \caption{Example $1$. Given $x \in \mathbb{R}^2$, $\Pi(x)$ is the state on 
  the ellipse $\Sigma$ which achieves the minimal distance to $x$, while $\Theta(x)$ and
  $\Theta^A(x)$ are the limits of the ODE flows (\ref{ex1-scheme-0-flow}) and (\ref{ex1-scheme-0-flow-a}) starting from $x$, respectively. \label{fig-cmp-theta-pi}}
\end{figure}
\begin{figure}[htpb]
  \centering
  \includegraphics[width=15cm]{./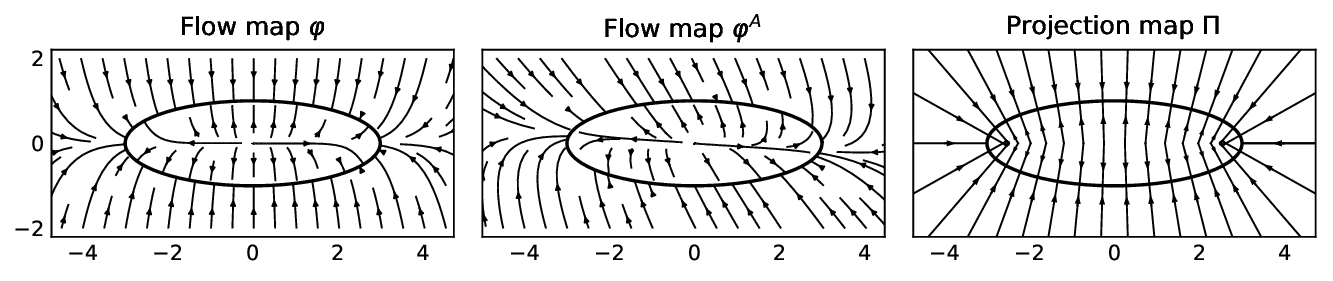}
  \caption{Example $1$. Left: the streamline of the flow map $\varphi$ in (\ref{ex1-scheme-0-flow}).
  Middle: the streamline of the flow map $\varphi^A$ (\ref{ex1-scheme-0-flow-a}) with the matrix $A$ in (\ref{ex1-scheme-1-matrix-a}).
  Right: illustration of the projection $\Pi$. Points on each straight line are
mapped to the same point on $\Sigma$. \label{fig-vector-field-theta-pi}}
\end{figure}
\begin{figure}[htp]
  \includegraphics[width=12cm, height=4cm]{./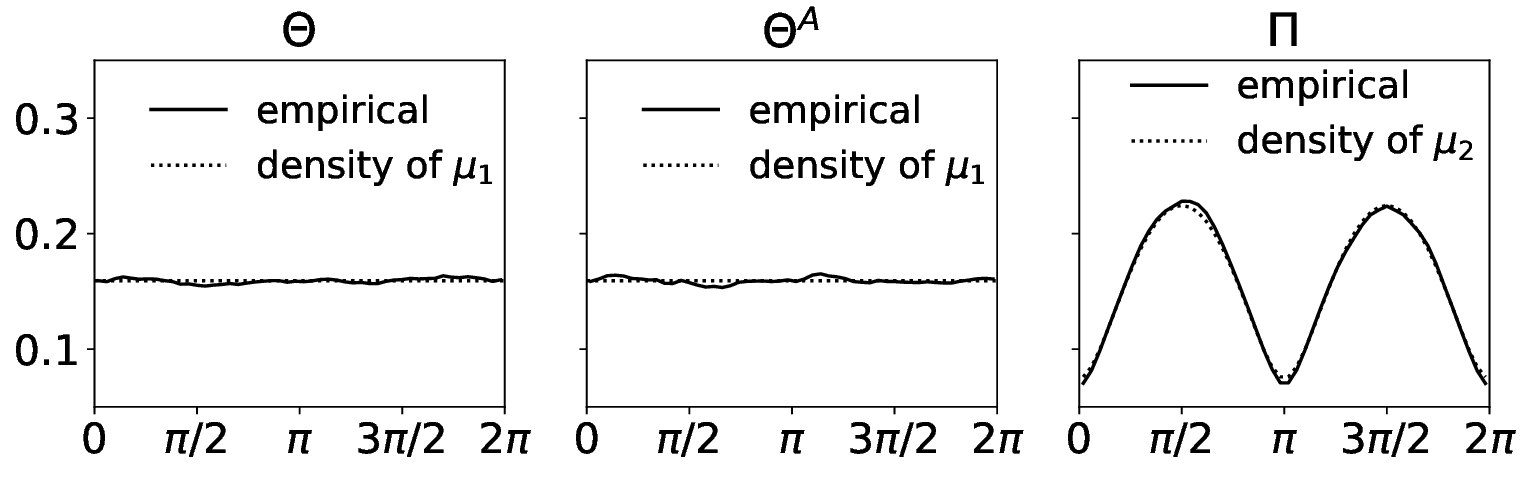}
  \centering
  \caption{Example $1$. The probability densities of the parameter
  $\theta$, computed from the scheme (\ref{ex1-scheme-0}) using $\Theta$ (left plot),
  the scheme (\ref{ex1-scheme-0-a}) using $\Theta^A$ (middle plot), and the
  scheme (\ref{ex1-scheme-1}) using $\Pi$ (right plot).
In each plot, dotted curves are the probability densities computed from the analytical expressions of $\mu_1, \mu_2$ in
(\ref{ex1-mu1-mu2-theta}), respectively. Solid lines are the empirical
probability densities of $\theta$ estimated using the states generated from
the schemes (\ref{ex1-scheme-0}), (\ref{ex1-scheme-0-a}) and (\ref{ex1-scheme-1}), respectively. 
\label{fig-dist-and-traj}}
\end{figure}
\begin{figure}[htp]
  \includegraphics[width=8cm, height=5cm]{./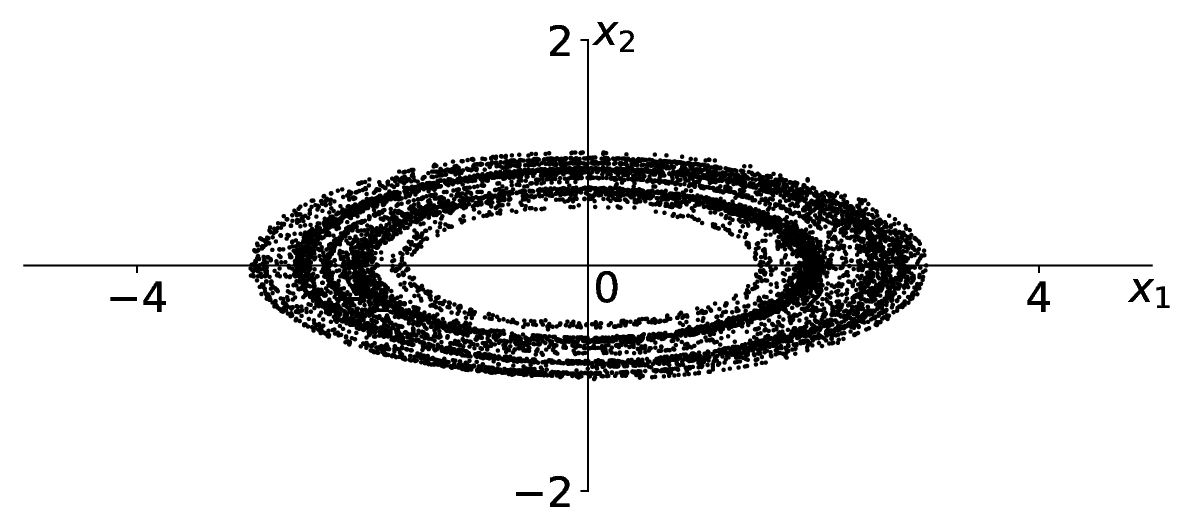}
  \centering
  \caption{Example $1$. States generated from the Euler-Maruyama discretization (\ref{ex1-scheme-2}),
where we choose $h=0.0001$ and $n=10^{7}$. In this case, the sampled states deviate from the level set $\Sigma$. 
  \label{fig-em-scatter}}
\end{figure}
\subsubsection*{Example 2: Numerical comparison with the
Metropolis-adjusted method on the special orthogonal group $SO(11)$.}
In this example, we compare the computational efficiency between our scheme 
(\ref{micro-scheme-initial-repeat}) using the flow map $\Theta$ and the Metropolis-adjusted method
introduced in~\cite{goodman-submanifold}. We consider the special orthogonal
group $SO(11)$, which consists of  orthogonal matrices of size $11 \times 11$ with determinant equals to $1$. 
This example is taken from~\cite{goodman-submanifold}. The authors there applied their method to the estimation of the mean value of the function $f(x) = \mbox{Tr}(x)$, 
i.e, the trace of the matrix $x$, where $x$ follows the surface measure of $SO(11)$.
The manifold $SO(11)$ can be viewed as (one connected component of) the level set 
of the map $\xi: \mathbb{R}^{121} \rightarrow \mathbb{R}^{66}$, which includes
all the row ortho-normality constraints. Readers are referred to the original work~\cite{goodman-submanifold}
for a detailed introduction on the example.

In this numerical study, we implement both the scheme
(\ref{micro-scheme-initial-repeat}) and the (Metropolis-adjusted) algorithm
in~\cite{goodman-submanifold} to estimate the mean value of $\mbox{Tr}(x)$.
Notice that, since $\det(\nabla\xi^T\nabla\xi)$ is constant, the conditional
measure $\mu_1$ in (\ref{mu1-intro}) coincides with the surface measure of $SO(11)$ when we choose the
potential $U\equiv 0$.
In both cases, we generate $n=10^6$ samples on the same laptop (CPU: Intel Core i5,
$2.60$GHz, $4$ cores; system: Ubuntu $18.04$).
For the scheme (\ref{micro-scheme-initial-repeat}), we choose the step-size
$h=0.022$. The map $\Theta$ is computed
by integrating the ODE~(\ref{phi-map}) with $a=\mbox{id}$, until
the condition $|\xi(\varphi(x,s))|<10^{-9}$ is satisfied.
To accelerate the ODE integration, we have applied the adaptivity technique in the second point of Remark~\ref{rmk-on-numerical-scheme-about-ode} with $\kappa=0.5$.
Starting from the initial step-size $\Delta s=0.2$, the step-size used in the ODE
integration is divided by $2.0$ whenever we find that the value of $|\xi|$ is not decreasing.
(The numerical error of $\Theta$ is $2.0 \times 10^{-4}$ on average, comparing to the
reference solution that is obtained by solving the ODE with $\kappa=0$ and
the fixed step-size $\Delta s= 0.002$.)
Furthermore, the new state will be discarded (and resampled) if its determinant equals to $-1$.
With these parameters, we observe that on average $37$ Runge-Kutta iterations
are needed for each evaluation of the map $\Theta$.
In total, it takes $2676.9$ seconds to generate $n=10^6$
samples, while the estimated mean value of $\mbox{Tr}(x)$ is $3.8 \times 10^{-3}$
with a statistical error $3.9 \times 10^{-3}$.
For the algorithm in~\cite{goodman-submanifold}, 
the maximal number of Newton steps is set to $10$ and the proposal length
scale is chosen to be $0.257$~\footnote{The roles of the proposal length scale
in~\cite{goodman-submanifold} and the step-size $h$ in the
scheme~(\ref{micro-scheme-initial-repeat}) are different. 
The proposal length scale $0.257$ used in this example corresponds to a step-size $0.033$
($\approx 0.257^2/2$) in the scheme (\ref{micro-scheme-initial-repeat}).}.
In our experiment, we find that this proposal length scale (different from the
one used in~\cite{goodman-submanifold}) leads to slightly smaller correlation time.
Within the entire computation, the success rate of the Newton's method (i.e.,
the rate that the Newton's method converges) is $67.2\%$ and 
each time it takes $5$-$6$ iterations 
on average for the Newton's method to reach convergence (the convergence
criteria is $|\xi(x)| < 10^{-9}$).
In total, it takes $7315.9$ seconds to generate $n=10^6$ samples. The
estimated mean value is $-3.8\times 10^{-3}$ and the statistical error is $3.8\times 10^{-3}$. 

The empirical density distributions of $\mbox{Tr}(x)$ using both the scheme
(\ref{micro-scheme-initial-repeat}) and the algorithm in~\cite{goodman-submanifold} are shown in the left
plot of Figure~\ref{fig-ex2-acf-density}, while the autocorrelation functions are
plotted in the right plot of Figure~\ref{fig-ex2-acf-density}.
From these results, we can conclude that in this example both approaches provide similar
statistical estimations (The autocorrelation time using Metropolis-adjusted
method is slightly smaller with the above parameters.). At the same time, the total computational time using the scheme (\ref{micro-scheme-initial-repeat})
is less than half of the computational time required by the algorithm in~\cite{goodman-submanifold}.
For this example, although the average number of Newton steps in the latter algorithm is
smaller than the average number of ODE iterations, the computational cost of
each Newton step is indeed larger. We refer to Remark~\ref{rmk-cmp-metropolis-or-not} for the
comparison of computational complexity of both approaches.
\begin{figure}[htp]
  \includegraphics[width=6.8cm, height=6cm]{./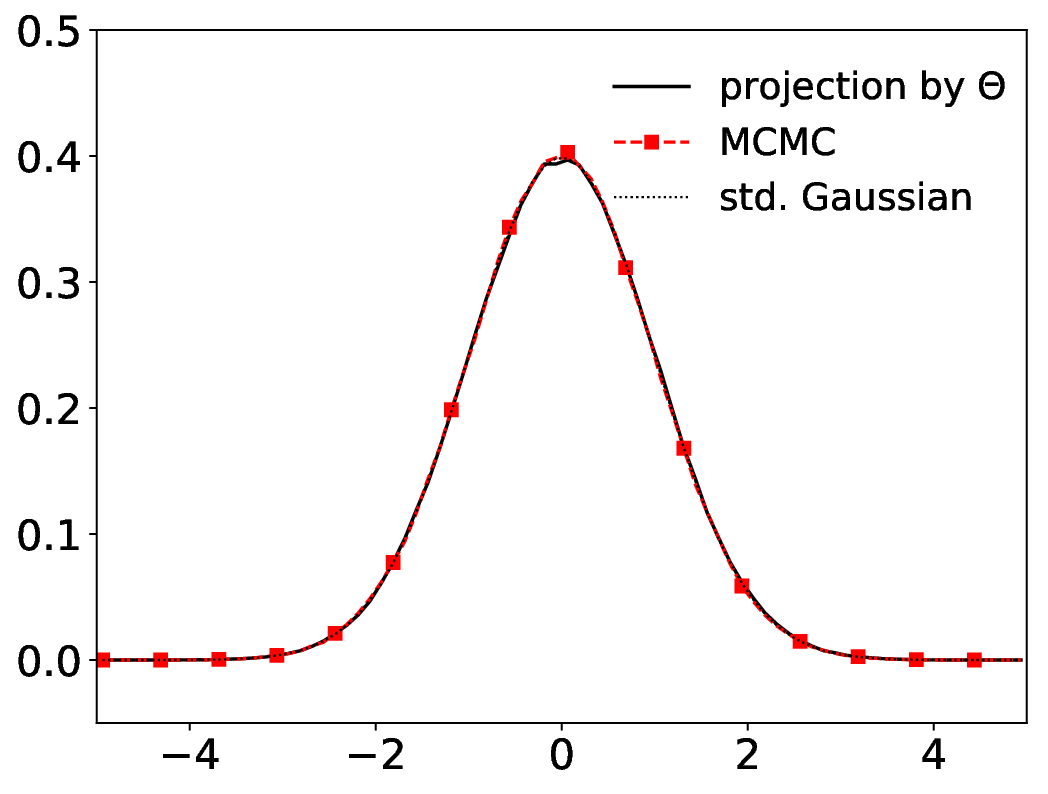}
  \includegraphics[width=6.8cm, height=6cm]{./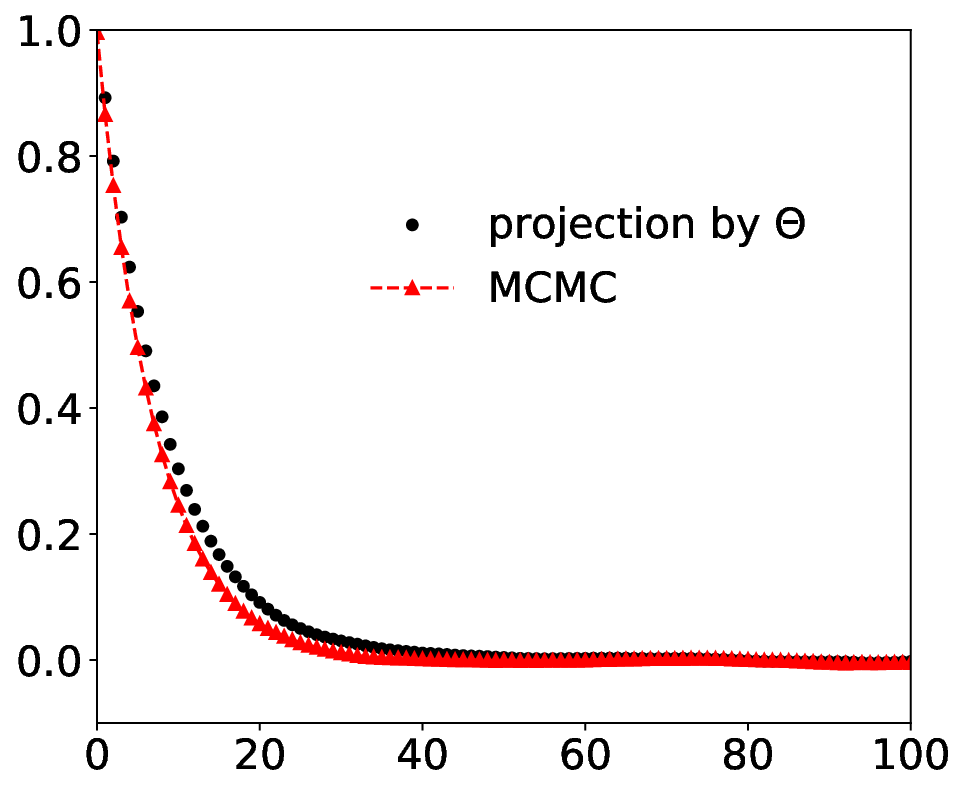}
  \centering
  \caption{Example $2$. 
  Left: empirical density plots of the statistics
  $\mbox{Tr}(x)$, where $x \in SO(11)$. Both empirical densities resemble
  the probability density of the standard Gaussian random variable.
  Right: autocorrelation functions of the sampled trajectories.
  In both plots, the curve with label ``projection by $\Theta$'' and 
  the curve with label ``MCMC'' are the results obtained using the scheme (\ref{micro-scheme-initial-repeat}) and the Metropolis-adjusted algorithm in \cite{goodman-submanifold}, respectively.
  \label{fig-ex2-acf-density}}
\end{figure}
\subsubsection*{Example 3: Removing stiffness by choosing a non-constant matrix $a$}
In this example, we choose the reaction coordinate function 
\begin{align*}
  \xi(x) = \xi(x_1, x_2,\cdots, x_d) =
\frac{1}{2} \Big(x_1^2 + x_2^2 + \cdots + x^2_d - 1\Big)\,.
\end{align*}
Correspondingly, the level set 
 \begin{align*}
\Sigma=\Big\{(x_1, x_2,\cdots, x_d) \in \mathbb{R}^d~\Big|~ x_1^2 + x_2^2 +\cdots + x_d^2 = 1\Big\}
\end{align*}
is the $(d-1)$-dimensional unit sphere, and we have 
\begin{align*}
\nabla\xi = (x_1,x_2,\cdots,x_d)^T, \quad \nabla\xi^T\nabla\xi = \sum_{i=1}^d x_i^2\,.
\end{align*}
In the following, we give an example to show that in some applications it is
helpful to use a non-constant matrix $a$ in the numerical scheme \eqref{micro-scheme-initial-repeat}. 
Briefly speaking, varying the matrix $a$ properly allows to rescale the scheme along different directions. 
It has a preconditioning effect when different time scales (stiffness) exist. 

Consider $d=3$ and the potential $U = \frac{\theta^2}{2\epsilon}$, where
$\epsilon>0$ is a small parameter, $\theta$ is the angle of the state $x = (x_1, x_2, x_3)$ under the  
spherical coordinate system 
\begin{align*}
  x_1 = \rho\cos\theta\cos\varphi\,, \quad x_2 = \rho\cos\theta\sin\varphi\,, \quad x_3 = \rho\sin\theta\,,
\end{align*}
where $\rho \ge 0$, $\theta \in [-\frac{\pi}{2},\frac{\pi}{2}]$, and $\varphi \in
[0, 2\pi]$. We can verify that 
\begin{align}
  \nabla \theta = \frac{1}{\rho^2}\Big(-\frac{x_1x_3}{(x_1^2+x_2^2)^{\frac{1}{2}}},\,
  -\frac{x_2x_3}{(x_1^2+x_2^2)^{\frac{1}{2}}},\,
  (x_1^2+x_2^2)^{\frac{1}{2}}\Big)^T\,.
  \label{ex3-nabla-theta-exp}
\end{align}
Correspondingly, with the choice of $\sigma=a=\mbox{id}$, the scheme (\ref{micro-scheme-initial-repeat}) is 
\begin{align}
  \begin{split}
    x^{(l + \frac{1}{2})} =& x^{(l)}-\frac{1}{\epsilon} (\theta \nabla \theta)(x^{(l)})\,h + \sqrt{2 \beta^{-1}h}\, \bm{\eta}^{(l)}\,, \\
  x^{(l+1)} =& \Theta\big(x^{(l+\frac{1}{2})}\big)\,,
\end{split}
\label{ex3-scheme-a-id}
\end{align}
where $\nabla\theta$ is given in (\ref{ex3-nabla-theta-exp}). 
Notice that, the coefficients in (\ref{ex3-scheme-a-id}) are $\mathcal{O}(\frac{1}{\epsilon})$ when
$\epsilon$ is small.
In particular, it implies that sampling  the invariant measure using (\ref{ex3-scheme-a-id})
will be inefficient when $\epsilon$ is small, since the step-size $h$
will be severely limited due to the large magnitude of the coefficients in (\ref{ex3-scheme-a-id}). 

On the other hand, based on the form of $U$ and the expression
(\ref{ex3-nabla-theta-exp}), we consider the orthogonal vectors
\begin{align*}
  \begin{split}
  \bm{\sigma}_1 =& (x_1 , x_2 , x_3)^T=\nabla\xi\,,\quad \bm{\sigma}_2 = (x_2 , -x_1 , 0)^T\,,\\
  \bm{\sigma}_3 =& \Big(-\frac{\sqrt{\epsilon}\,x_1x_3 }{(x_1^2+x_2^2)^{\frac{1}{2}}}, 
    -\frac{\sqrt{\epsilon}\,x_2x_3}{(x_1^2+x_2^2)^{\frac{1}{2}}}, 
    \sqrt{\epsilon} (x_1^2+x_2^2)^{\frac{1}{2}}\Big)^T=\sqrt{\epsilon}\rho^2
    \nabla\theta\,, 
  \end{split}
\end{align*}
and we define $\sigma =
(\bm{\sigma}_1,\bm{\sigma}_2, \bm{\sigma}_3) \in \mathbb{R}^{3 \times 3}$.
Direct calculation shows that 
\begin{align}
 a=\sigma \sigma^T = 
  \begin{pmatrix}
    x_1^2 + x_2^2+\frac{\epsilon x_1^2x_3^2}{x_1^2+x_2^2} & 
    \frac{\epsilon x_1x_2 x_3^2}{x_1^2+x_2^2} & (1-\epsilon) x_1x_3\\
\frac{\epsilon x_1x_2 x_3^2}{x_1^2+x_2^2}  & x_1^2+x_2^2+\frac{\epsilon
  x_2^2x_3^2}{x_1^2+x_2^2} & (1-\epsilon) x_2x_3 \\
    (1-\epsilon) x_1x_3 & (1-\epsilon) x_2x_3 & x_3^2 + \epsilon(x_1^2 + x_2^2)
  \end{pmatrix}\,.
    \label{ex3-1-id-eps-a}
\end{align}
Correspondingly, using (\ref{ex3-1-id-eps-a}), the scheme (\ref{micro-scheme-initial-repeat}) becomes 
\begin{align}
  \begin{split}
    x^{(l + \frac{1}{2})}_i =& x^{(l)}_i+\Big[-\theta \frac{\partial
    \theta}{\partial x_i}+\frac{1}{\beta} \frac{\partial a_{ij}}{\partial x_j}\Big](x^{(l)})\,h +
    \sqrt{2 \beta^{-1}h}\, \sigma_{ij}(x^{(l)})\,\bm{\eta}^{(l)}_j\,, \quad 1
    \le i \le 3\,,\\
  x^{(l+1)} =& \Theta\big(x^{(l+\frac{1}{2})}\big)\,,
\end{split}
\label{ex3-scheme-a-non-id}
\end{align}
where $\Theta(x)$ is the limit of the ODE flow 
\begin{align}
  \dot{y}(s) = - \xi\big(y(s)\big)\Big(2\xi\big(y(s)\big)+1\Big)\,y(s)\,,\quad  y(0) = x\,.
  \label{ex3-scheme-a-non-id-ode}
\end{align}
Importantly, in contrast to (\ref{ex3-scheme-a-id}), the scheme
(\ref{ex3-scheme-a-non-id})--(\ref{ex3-scheme-a-non-id-ode}) is no longer stiff when $\epsilon$ is small.

Now we compare the numerical efficiency between the
schemes (\ref{ex3-scheme-a-id}) and (\ref{ex3-scheme-a-non-id})--(\ref{ex3-scheme-a-non-id-ode}).
First of all, since the surface measure on $\Sigma$ satisfies $d\nu =
\cos\theta d\theta\,d\varphi$, we know that the target measure is 
\begin{align}
  d\mu = \frac{1}{Z} e^{-\frac{\beta \theta^2}{2\epsilon}} d\nu = 
  \frac{1}{Z} e^{-\frac{\beta \theta^2}{2\epsilon}} \cos\theta\, d\theta\,d\varphi\,.
  \label{ex3-mu-eps}
\end{align}
In the numerical study, we choose $\epsilon = 0.005$ and generate $n=10^7$ states for both schemes.
For the scheme (\ref{ex3-scheme-a-id}) which corresponds to $a=\mbox{id}$, 
we use both a small step-size $h=0.0002$ and a (relatively) larger step-size $h=0.005$, while
we choose a large step-size $h=0.01$ in the scheme (\ref{ex3-scheme-a-non-id})--(\ref{ex3-scheme-a-non-id-ode}) . The empirical probability
densities of the angles $\theta, \varphi$ for the two schemes are shown
in Figure~\ref{fig-ex3-1} and Figure~\ref{fig-ex3-2}, respectively.
From Figure~\ref{fig-ex3-1}, we see that the step-size $h$ has to be small ($h=0.0002$) in (\ref{ex3-scheme-a-id}) in order to produce the
correct probability density of the angle $\theta$ (left plot). However, with such a small
$h$, the estimated empirical density of the angle $\varphi$ (right plot) is
still noisy with $n=10^7$. On the other hand, for the scheme 
(\ref{ex3-scheme-a-non-id})--(\ref{ex3-scheme-a-non-id-ode}) which corresponds
to the matrix $a$ in (\ref{ex3-1-id-eps-a}),
Figure~\ref{fig-ex3-2} shows that the probability densities of both angles
$\theta, \varphi$ are well approximated using the large step-size $h=0.01$.
Therefore, we conclude that in this example choosing the non-constant matrix $a$ in (\ref{ex3-1-id-eps-a}) indeed 
helps improve the sampling efficiency.
\begin{figure}[htpb]
  \centering
  \includegraphics[width=15cm]{./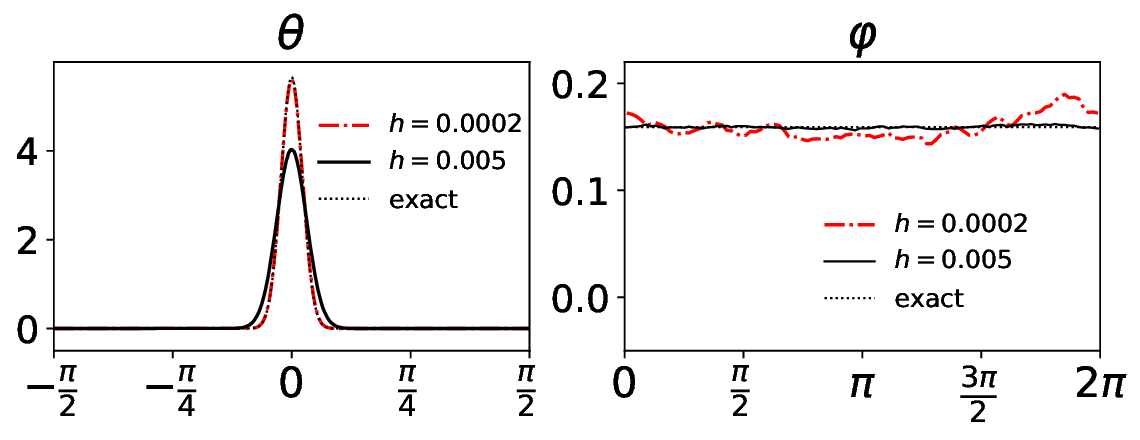}
  \caption{Example $3$. The empirical densities of the angles $\theta$ and
  $\varphi$ are estimated using the scheme (\ref{ex3-scheme-a-id}) which corresponds to $a=\mbox{id}$.
  $\epsilon = 0.005$ and $n=10^7$ states are sampled, using a small step-size
  $h=0.0002$ and a larger step-size $h=0.005$. The curves with label ``exact'' are the analytical 
  marginal densities computed from (\ref{ex3-mu-eps}).
  \label{fig-ex3-1}}
\end{figure}
\begin{figure}[htpb]
  \centering
  \includegraphics[width=15cm]{./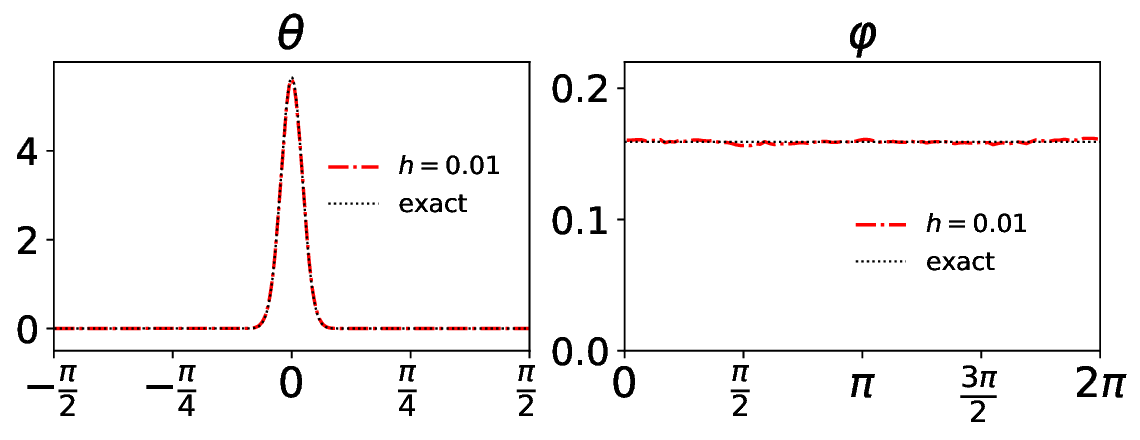}
  \caption{Example $3$. The empirical densities of the angles $\theta$ and
  $\varphi$ are estimated using the scheme (\ref{ex3-scheme-a-non-id})--(\ref{ex3-scheme-a-non-id-ode})
   which corresponds to the matrix $a$ in (\ref{ex3-1-id-eps-a}).
  $\epsilon = 0.005$ and $n=10^7$ states are sampled using a large step-size
  $h=0.01$. The curves with label ``exact'' are the analytical marginal densities
  computed from (\ref{ex3-mu-eps}). \label{fig-ex3-2}}
\end{figure}
\FloatBarrier

\section{Conclusions}
\label{sec-conclusion}
Ergodic diffusion processes on a submanifold of $\mathbb{R}^d$ and related numerical sampling schemes have been considered
in this work. A family of SDEs has been obtained whose invariant measures coincide with
the given probability measure on the submanifold. In particular, for the
conditional probability measure, we found that the corresponding SDEs
have a relatively simple form. We proposed and analyzed a consistent numerical scheme
which only requires $1$st order derivatives of the reaction coordinate function.
Different sampling schemes on the submanifolds are numerically evaluated. 

The current work extends results in the literature and may further contribute to 
both the analysis and the development of numerical methods on
related problems, in particular problems in molecular dynamics such as free energy calculation and model
reduction of high-dimensional stochastic processes. 
Closely related to the current paper, the following topics could be considered. 
First, the ``non-reversible'' scheme (\ref{scheme-non-reversible}) is supported by a simple
numerical example but theoretical justification still needs to be investigated. This will be
considered in future following the approach described in Remark~\ref{rmk-conjecture-non-reversible-scheme}. 
Second, the constrained numerical schemes in the current work do
not involve system's momentum variables. In view of the
work~\cite{Tony-constrained-langevin2012}, it is interesting to study the
Langevin dynamics under different constraints (such as certain variants of the map $\Theta$ used in this work).
Third, there is a research interest in the literature to study the effective dynamics of
molecular systems along a given reaction coordinate $\xi$. The coefficients of the
effective dynamics are usually defined as averages on the level set of
$\xi$~\cite{effective_dynamics}. As an application of the numerical scheme proposed in this work, we will study
numerical algorithms to simulate the effective dynamics. This topic is related to the
heterogeneous multiscale methods~\cite{hmm_review_2007} and the equation-free
approach~\cite{eqfree-computer,eqfree-algo} in the literature. 
\section*{Acknowledgement}
This work is funded by the Einstein Center of Mathematics (ECMath) through project CH21.
The author would like to thank Gabriel Stoltz for stimulating discussions on
constrained Langevin processes at the Institut Henri Poincar{\'e} - Centre
{\'E}mile Borel during the trimester ``Stochastic Dynamics Out of Equilibrium''. 
The author appreciates the hospitality of this institution.
The author also thanks the anonymous referees for their valuable comments and
criticism which helped improve the manuscript substantially.
\appendix
\section{Useful facts about the Riemannian manifold $\mathcal{M}$}
\label{app-sec-manifold}
In this section, we present technical details of Section~\ref{sec-n-rn} related to the Riemannian 
manifold $\mathcal{M}=(\mathbb{R}^d, g)$, where $g=a^{-1}$. 
The main result is Proposition~\ref{laplaceN-state}, where we give the expression of
the Laplacian-Beltrami operator $\Delta^{\Sigma}$ on the level set $\Sigma$ in (\ref{levelset-sigma-intro}), viewed as
a submanifold of $\mathcal{M}$. Before that, we first introduce some
notations and quantities related to $\mathcal{M}$ and $\Sigma$.
Readers are referred to \cite{do1992riemannian,bishop1964geometry,jost2008riemannian,petersen2006riemannian}
for related discussions on general Riemannian manifolds. 

Under Assumption~\ref{assump-1}, given two vectors $\bm{u} = (u_1, u_2, \cdots, u_d)^T$, $\bm{v} = (v_1, v_2, \cdots, v_d)^T$,
we consider the space $\mathbb{R}^d$ with the weighted inner product
\begin{align}
  g(\bm{u}, \bm{v}) = \langle \bm{u}, \bm{v}\rangle_g = u_i (a^{-1})_{ij} v_j\,.
  \label{ip-a}
\end{align}
The inner product in (\ref{ip-a}) defines a Riemannian metric $g$ on $\mathbb{R}^d$ and 
we denote by $\mathcal{M} = (\mathbb{R}^d, g)$
the Riemannian manifold $\mathbb{R}^d$ endowed with this metric.

Notice that $\mathcal{M}$ as a manifold is quite special
(simple), in that 
it has a natural global coordinate chart which is given by the usual Euclidean coordinate.
Since we will always work with this coordinate, we will
not distinguish between tangent vectors (operators acting on functions) and their coordinate representations
($d$-dimensional vectors). 
In particular,  $\bm{e}_i$ denotes the vector whose $i$th component equals
to $1$ while all the other $d-1$ components equal to $0$, where $1 \le i \le d$.
At each point $x \in \mathcal{M}$, vectors $\bm{e}_1, \bm{e}_2,\,\cdots, \bm{e}_d$ form a basis of the
tangent space $T_x\mathcal{M}$ and under this basis we have $g=a^{-1}$, as can be seen from (\ref{ip-a}).

Denote by $\mbox{grad}^{\mathcal{M}}$, $\mbox{div}^{\mathcal{M}}$ the
gradient and the divergence operator on $\mathcal{M}$, respectively.
For any smooth function $f: \mathcal{M} \rightarrow \mathbb{R}$, it is direct to verify that 
\begin{align*}
  \mbox{grad}^{\mathcal{M}} f = g^{ij} \frac{\partial f}{\partial x_j}
\bm{e}_i = (a\nabla f)_i\, \bm{e}_i\, ,
\end{align*} 
where $g^{ij}=(g^{-1})_{ij} = a_{ij}$, and $\nabla f$ 
denotes the ordinary gradient operator for functions on the Euclidean space $\mathbb{R}^d$.
For simplicity, we will also write $\partial_i f$ for the partial derivative with respect to $x_i$, and $(a\nabla
f)_i$ to denote the $i$th component of the vector $a\nabla f$, i.e., 
$\partial_i f = \frac{\partial f}{\partial x_i}$, and 
$(a\nabla f)_i = a_{ij}\frac{\partial f}{\partial x_j} = a_{ij}\partial_jf$.

The Laplace-Beltrami operator on $\mathcal{M}$ is defined by
$\Delta^{\mathcal{M}} f = \mbox{div}^{\mathcal{M}} (\mbox{grad}^{\mathcal{M}}
f)$. Equivalently, we have $\Delta^{\mathcal{M}} f =
\mbox{tr}(\mbox{Hess}^{\mathcal{M}}f)$, where  $\mbox{Hess}^{\mathcal{M}}$ is
the Hessian operator on $\mathcal{M}$. 
The integration by parts formula on $\mathcal{M}$ has the form
\begin{align}
  \int_{\mathcal{M}} (\Delta^{\mathcal{M}} f_1)\,f_2\,dm = - \int_{\mathcal{M}}
  \langle \mbox{grad}^{\mathcal{M}} f_1,
  \mbox{grad}^{\mathcal{M}} f_2\rangle_g\, dm = \int_{\mathcal{M}}
  (\Delta^{\mathcal{M}} f_2)f_1\,dm\,,  
  \label{integrate-by-part}
\end{align}
for $\forall f_1,\, f_2 \in C^\infty_0(\mathcal{M})$,
where $dm= (\det g)^{\frac{1}{2}} dx = (\det a)^{-\frac{1}{2}} dx$ is the volume form, and $C^{\infty}_0(\mathcal{M})$ consists of all smooth functions on $\mathcal{M}$ with compact support.

Besides the vector basis $\bm{e}_1,\, \bm{e}_2,\, \cdots, \bm{e}_d$, the vectors 
\begin{align}
\bm{\sigma}_i =
(\sigma_{1i}, \sigma_{2i},\cdots, \sigma_{di})^T, \quad 1 \le i \le d\,, 
\label{sigma-basis}
\end{align}
will also be useful. Note that $a=\sigma\sigma^T=g^{-1}$ implies $\langle \bm{\sigma}_i,
\bm{\sigma}_j\rangle_g = (a^{-1})_{rl}\sigma_{ri}\sigma_{lj} = \delta_{ij}$.
In other words, $\bm{\sigma}_1,\,\bm{\sigma}_2,\, \cdots, \bm{\sigma}_d$ form
an orthonormal basis of $T_x\mathcal{M}$ at each $x \in \mathcal{M}$.

Denote by $\nabla^{\mathcal{M}}$ the Levi-Civita connection on $\mathcal{M}$.
Given $x \in \mathcal{M}$ and a tangent vector $\bm{v} \in T_x\mathcal{M}$, 
$\nabla^{\mathcal{M}}_{\bm{v}}$
is the covariant derivative operator on $\mathcal{M}$ along the vector $\bm{v}$.
For two vectors $\bm{u} = (u_1, u_2, \cdots, u_d)^T$, $\bm{v} = (v_1, v_2, \cdots, v_d)^T$,
the Hessian of a smooth function $f: \mathcal{M} \rightarrow \mathbb{R}$ is defined as 
\begin{align}
  \mbox{Hess}^{\mathcal{M}} f(\bm{u},\bm{v}) = \bm{u}(\bm{v}f) -
  (\nabla^{\mathcal{M}}_{\bm{u}}\bm{v}) f =
  u_iv_j \mbox{Hess}^{\mathcal{M}} f\big(
  \bm{e}_i,\bm{e}_j\big) = u_iv_j \Big(\frac{\partial^2 f}{\partial
  x_i\partial x_j} - \Gamma_{ij}^l \frac{\partial f}{\partial x_l}\Big)\,,
  \label{hessian-chart}
\end{align}
  where  
  \begin{align}
    \Gamma_{ij}^l =& \frac{1}{2} g^{lr}\Big(\frac{\partial g_{ir}}{\partial x_j}
  + \frac{\partial g_{jr}}{\partial x_i} - \frac{\partial g_{ij}}{\partial
  x_r}\Big) 
  = \frac{1}{2} a_{lr}\Big(\frac{\partial (a^{-1})_{ir}}{\partial x_j}
  + \frac{\partial (a^{-1})_{jr}}{\partial x_i} - \frac{\partial (a^{-1})_{ij}}{\partial
  x_r}\Big)\,, 
  \quad 1 \le i,j,l \le d
  \label{gamma-det-exp}
\end{align}
  are the Christoffel's symbols defined by  
  $\nabla^{\mathcal{M}}_{\bm{e}_i} \bm{e}_j =
\Gamma^{l}_{ij} \bm{e}_l$, for $1 \le i,j\le d$.

Now let us consider the level set 
\begin{align}
  \Sigma=\xi^{-1}(\bm{0})=\Big\{ x \in \mathcal{M}=\mathbb{R}^d ~\Big|~ \xi(x)
  = \bm{0} \in \mathbb{R}^k\Big\}
\label{levelset-sigma}
\end{align}
of the $C^2$ function $ \xi : \mathbb{R}^d \rightarrow \mathbb{R}^k$ with $\xi = (\xi_1, \xi_2, \cdots, \xi_k)^T$, $1 \le k < d$. 
Applying regular value theorem~\cite{banyaga2004lectures}, we know that $\Sigma$ is a $(d-k)$-dimensional submanifold of $\mathcal{M}$, under Assumption~\ref{assump-2}.

Given $x \in \Sigma$ and a vector $\bm{v} \in T_{x}\mathcal{M}$, the orthogonal
projection operator ($d \times d$ matrix) $P : \mathbb{R}^d \rightarrow
T_x\Sigma$ is defined such that $\langle \bm{v}-P\bm{v}, \bm{u}\rangle_g = 0$,
for $\forall\, \bm{u} \in T_x\Sigma\,$.
It is straightforward to verify that $P=\mbox{id}-
a\nabla\xi\Psi^{-1}\nabla\xi^T$, or entry-wise, 
\begin{align}
  P_{ij} = \delta_{ij} -
  (\Psi^{-1})_{\alpha\gamma}\,(a\,\nabla\xi_{\alpha})_i
  \partial_{j}\xi_{\gamma} \,, \quad 1 \le i,j \le d\,,
\label{p-ij}
\end{align}
where $\Psi$ is the invertible $k \times k$ symmetric matrix at each point $x
\in \Sigma$, given by
\begin{align}
  \Psi_{\alpha\gamma} = \langle\mbox{grad}^{\mathcal{M}}\xi_\alpha, \mbox{grad}^{\mathcal{M}}\xi_\gamma\rangle_g 
  =(\nabla\xi^T a \nabla \xi)_{\alpha\gamma}\,, \quad 1 \le \alpha, \gamma \le
  k\,.
  \label{psi-ij}
\end{align}
In the above, $\nabla\xi$ denotes the $d \times k$ matrix with entries
$(\nabla\xi)_{i\alpha} =\partial_i\xi_\alpha$, for $1 \le \alpha \le k$, $1 \le i
\le d$. We can verify that 
\begin{align}
 \quad aP^T = Pa, \quad P^2 = P\,, \quad P^T\nabla\xi_\alpha = 0\,, \quad
 1 \le \alpha \le k\,.
 \label{p-fact}
\end{align}
  Let us further assume that $\bm{v} \in T_x\Sigma$ is a tangent vector of $\Sigma$ at $x$. 
  Since $\{\bm{\sigma}_i\}_{1 \le i \le d}$ forms an orthonormal basis of the
  tangent space $T_x \mathcal{M}$,
we have $\bm{v} =
\langle \bm{v}, \bm{\sigma}_i\rangle_g \bm{\sigma}_i$. 
Using the fact that $P\bm{v} = \bm{v}$, we obtain
$\bm{v}
= \langle \bm{v},
\bm{p}_i\rangle_g \bm{p}_i$,
where $\bm{p}_i = P\bm{\sigma}_i \in T_x\Sigma$. If we denote $\bm{p}_i = P_{i,j} 
\bm{e}_j$, then it follows from (\ref{p-ij}) and (\ref{p-fact}) that
\begin{align}
  \begin{split}
    & P_{i,j} = (P\sigma)_{ji} = \sigma_{ji} -
    (\Psi^{-1})_{\alpha\gamma} (a \nabla\xi_\alpha)_j
  (\sigma^T\nabla\xi_\gamma)_i  \\ 
    &P_{l,i}P_{l,j} = (Pa)_{ij}=(aP^T)_{ij} = a_{ij} - 
     (\Psi^{-1})_{\alpha\gamma} (a\nabla\xi_\alpha)_i
  (a\nabla\xi_\gamma)_j\,,
\end{split}
\label{p-i-j}
  \end{align}
for $1\le i,j \le d$.

Let $\mbox{grad}^{\Sigma}$, $\mbox{div}^{\Sigma}$, $\Delta^{\Sigma}$, $\mbox{Hess}^{\Sigma}$ denote the 
gradient operator, the divergence operator, the Laplace-Beltrami operator and the Hessian operator on $\Sigma$,
respectively. It is direct to check that
the Levi-Civita connection and the gradient operator on $\Sigma$ 
are given by $\nabla^{\Sigma}=P\nabla^{\mathcal{M}}$ and
$\mbox{grad}^{\Sigma}=P\,\mbox{grad}^{\mathcal{M}}$, respectively. 
 In particular, for $f \in C^\infty(\Sigma)$ and let $\widetilde{f}$ be its
 extension to $\mathcal{M}$ such that $\widetilde{f} \in
 C^\infty(\mathcal{M})$ and $\widetilde{f}|_{\Sigma} = f$, we have 
\begin{align*}
  \mbox{grad}^{\Sigma} f =
  P\mbox{grad}^{\mathcal{M}}\widetilde{f} = Pa \nabla\widetilde{f}\,.
  \end{align*}
Let $\nu_g$ be the surface measure on $\Sigma$ induced from
the metric $g$ on $\mathcal{M}$.
We recall that the mean curvature vector $H$ on $\Sigma$ is defined such
that~\cite{ambrosio1996,projection_diffusion}
\begin{align}
  \int_{\Sigma} \mbox{div}^{\Sigma} \bm{v}\, d\nu_g = - \int_{\Sigma} \langle H, \bm{v}\rangle_g \,
  d\nu_g \,, 
  \label{h-div-thm}
\end{align}
for all vector fields $\bm{v}$ on $\mathcal{M}$. 

We have the following lemma, concerning the operators on $\Sigma$. 
\begin{lemma}
  Let $f \in C^{\infty}(\Sigma)$ and $\widetilde{f} \in
  C^{\infty}(\mathcal{M})$ be its extension
  to $\mathcal{M}$. $\bm{u} \in \Gamma(T\mathcal{M})$ is a tangent vector field
  on $\mathcal{M}$ and we recall the vectors $\bm{p}_i = P\bm{\sigma}_i$, $1
  \le i \le d$.
We have  
  \begin{enumerate}
\item
  $\mbox{\textnormal{div}}^{\Sigma} \bm{u} = \langle \nabla^{\mathcal{M}}_{\bm{p}_i} \bm{u},
  \bm{p}_i\rangle_g$.
\item
  $(\mbox{\textnormal{div}}^{\Sigma} \bm{p}_i)\bm{p}_i+ P\nabla^{\mathcal{M}}_{\bm{p}_i}
  \bm{p}_i= 0 \,$.
\item
  $\Delta^{\Sigma} f = \sum\limits_{i=1}^d\bm{p}_i^2 \widetilde{f} + (\mbox{\textnormal{div}}^{\Sigma} \bm{p}_i) \bm{p}_i
    \widetilde{f}
    = \sum\limits_{i=1}^d\bm{p}_i^2 \widetilde{f} - \big(P
    \nabla^{\mathcal{M}}_{\bm{p}_i}
    \bm{p}_i\big) \widetilde{f}$.
\item
  $\Delta^{\Sigma} f  
  = \mbox{\textnormal{Hess}}^{\mathcal{M}} \widetilde{f}( \bm{p}_i, \bm{p}_i) + H\widetilde{f}$,
where $H$ is the mean curvature vector of the submanifold $\Sigma$.
\item
  In the special case when $g=a=\mbox{\textnormal{id}}$,
  we have $(\mbox{\textnormal{div}}^{\Sigma} \bm{p}_i)\bm{p}_i=
  P\nabla^{\mathcal{M}}_{\bm{p}_i}
  \bm{p}_i= 0$, and $\Delta^{\Sigma} = \sum\limits_{i=1}^d \bm{p}_i^2$.
  \end{enumerate}
  \label{lemma-n-1}
\end{lemma}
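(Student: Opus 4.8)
The plan is to establish the five assertions in sequence, with parts 1 and 2 forming the backbone from which parts 3--5 follow. The organizing principle is that, although the $d$ vectors $\{\bm{p}_i\}_{i=1}^d$ are linearly dependent (they span only the $(d-k)$-dimensional space $T_x\Sigma$), they constitute a tight frame: the identity $\bm{v} = \langle\bm{v},\bm{p}_i\rangle_g\,\bm{p}_i$ recorded just before (\ref{p-i-j}) holds for every $\bm{v}\in T_x\Sigma$. To prove part 1, I would fix a local $g$-orthonormal frame $\{\bm{\tau}_a\}_{a=1}^{d-k}$ of $T\Sigma$, for which $\mbox{div}^{\Sigma}\bm{u} = \langle\nabla^{\mathcal{M}}_{\bm{\tau}_a}\bm{u},\bm{\tau}_a\rangle_g$ by definition. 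Since $\sum_i\langle\bm{p}_i,\bm{\tau}_a\rangle_g\langle\bm{p}_i,\bm{\tau}_b\rangle_g = \langle P\bm{\tau}_a,\bm{\tau}_b\rangle_g = \delta_{ab}$, the trace of any symmetric bilinear form over the tight frame $\{\bm{p}_i\}$ equals its trace over $\{\bm{\tau}_a\}$; applying this to $(\bm{w}_1,\bm{w}_2)\mapsto\langle\nabla^{\mathcal{M}}_{\bm{w}_1}\bm{u},\bm{w}_2\rangle_g$ yields part 1.

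For part 2 I would combine part 1 with the Leibniz rule for $\mbox{div}^{\Sigma}$. Expanding an arbitrary tangent field as $\bm{X} = \langle\bm{X},\bm{p}_i\rangle_g\,\bm{p}_i$ and using $\mbox{div}^{\Sigma}(f\bm{Y}) = f\,\mbox{div}^{\Sigma}\bm{Y} + \bm{Y}f$ together with metric compatibility and part 1, the term $\mbox{div}^{\Sigma}\bm{X}$ reproduces itself on the right and cancels, leaving
\[
0 = \langle\bm{X},\,(\mbox{div}^{\Sigma}\bm{p}_i)\bm{p}_i + P\nabla^{\mathcal{M}}_{\bm{p}_i}\bm{p}_i\rangle_g
\]
for all tangent $\bm{X}$; as the bracketed vector is itself tangent, part 2 follows. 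Part 3 is then immediate: writing $\mbox{grad}^{\Sigma}f = (\bm{p}_i\widetilde{f})\,\bm{p}_i$ (again by the tight frame) and applying the Leibniz rule gives the first equality, and part 2 converts $(\mbox{div}^{\Sigma}\bm{p}_i)\bm{p}_i$ into $-P\nabla^{\mathcal{M}}_{\bm{p}_i}\bm{p}_i$, giving the second.

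For part 4 I would insert the Hessian identity (\ref{hessian-chart}), namely $\bm{p}_i^2\widetilde{f} = \mbox{Hess}^{\mathcal{M}}\widetilde{f}(\bm{p}_i,\bm{p}_i) + (\nabla^{\mathcal{M}}_{\bm{p}_i}\bm{p}_i)\widetilde{f}$, into the second form of part 3, so that the connection terms assemble into the normal field $(\mbox{id}-P)\nabla^{\mathcal{M}}_{\bm{p}_i}\bm{p}_i$. It then remains to identify this field with the mean curvature vector $H$ of (\ref{h-div-thm}); I would verify this by splitting a test field $\bm{v} = \bm{v}^T + \bm{v}^\perp$, noting that $\int_\Sigma\mbox{div}^{\Sigma}\bm{v}^T\,d\nu_g = 0$ on the compact $\Sigma$ and that differentiating $\langle\bm{v}^\perp,\bm{p}_i\rangle_g = 0$ gives $\mbox{div}^{\Sigma}\bm{v}^\perp = -\langle\bm{v}^\perp,(\mbox{id}-P)\nabla^{\mathcal{M}}_{\bm{p}_i}\bm{p}_i\rangle_g$, which matches (\ref{h-div-thm}). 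For part 5, the choice $g = a = \mbox{id}$ makes $\nabla^{\mathcal{M}}$ the flat connection, so $\nabla^{\mathcal{M}}_{\bm{p}_i}\bm{p}_i$ reduces to an ordinary directional derivative of the columns of $P$; differentiating $P^2 = P$ produces $P(\partial_m P)P = 0$, which forces $P\nabla^{\mathcal{M}}_{\bm{p}_i}\bm{p}_i = 0$, whereupon part 2 yields $(\mbox{div}^{\Sigma}\bm{p}_i)\bm{p}_i = 0$ and $\Delta^{\Sigma} = \sum_{i=1}^d\bm{p}_i^2$.

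I expect the main obstacle to be the bookkeeping forced by the overcompleteness of $\{\bm{p}_i\}$: every trace-type step must be routed through the tight-frame identity rather than treating $\{\bm{p}_i\}$ as a basis, and one must track which quantities (such as $\mbox{Hess}^{\mathcal{M}}\widetilde{f}(\bm{p}_i,\bm{p}_i)$ and $H\widetilde{f}$) depend on the extension $\widetilde{f}$ even though their sum does not. The second delicate point is fixing the correct sign in the mean-curvature identification of part 4 against the defining relation (\ref{h-div-thm}); this is precisely where the compactness of $\Sigma$ in Assumption~\ref{assump-2} is used, to discard the tangential divergence.
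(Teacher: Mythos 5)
Your proposal is correct and follows the same overall route as the paper's proof: establish the divergence formula over the tight frame $\{\bm{p}_i\}$ (the trace argument you give is exactly the paper's computation with an orthonormal basis of $T_x\Sigma$), derive assertion 2 from metric compatibility and the identity $\langle\bm{v},\bm{p}_i\rangle_g\bm{p}_i=\bm{v}$, and obtain assertions 3--5 from the Leibniz rule, the Hessian identity (\ref{hessian-chart}), and the vanishing of the Christoffel symbols when $a=\mbox{id}$. The differences are only in execution. For assertion 2 the paper computes the sum $(\mbox{div}^{\Sigma}\bm{p}_i)\bm{p}_i+P\nabla^{\mathcal{M}}_{\bm{p}_i}\bm{p}_i$ directly and collapses it to $\nabla^{\mathcal{M}}_{\bm{p}_i}\bm{p}_i-\nabla^{\mathcal{M}}_{\bm{p}_j}\bm{p}_j=0$, whereas you pair the (tangent) vector against an arbitrary tangent test field and let the $\mbox{div}^{\Sigma}\bm{X}$ term cancel; both arguments use the same ingredients. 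For assertion 4 the paper cites the identity $H=(I-P)\nabla^{\mathcal{M}}_{\bm{p}_i}\bm{p}_i$ from Proposition~\ref{prop-mean} (whose proof in turn relies on assertion 1 of this lemma), while you verify it inline by splitting the test field into tangential and normal parts and using the divergence theorem on the compact boundaryless $\Sigma$; this reproduces the first half of the paper's proof of Proposition~\ref{prop-mean} and has the advantage of avoiding the forward reference. For assertion 5, your algebraic identity $P(\partial_m P)P=0$ obtained from $P^2=P$, combined with the symmetry $P=P^T$ valid when $a=\mbox{id}$ (needed to recognize the contraction $P_{sr}P_{li}\partial_l P_{ri}$ as an entry of $P(\partial_l P)P$), replaces the paper's index-by-index computation and is arguably cleaner. (A cosmetic point: the bilinear form $(\bm{w}_1,\bm{w}_2)\mapsto\langle\nabla^{\mathcal{M}}_{\bm{w}_1}\bm{u},\bm{w}_2\rangle_g$ in your part 1 need not be symmetric, but the tight-frame trace identity does not require symmetry, so nothing is lost.)
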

\begin{proof}
  The first two assertions can be directly verified. Let us prove the last three assertions.
Let $x \in \Sigma$ and assume that $\bm{v}_i$, $1 \le i \le d-k$, is an orthonormal basis of $T_x\Sigma$. 
We have $\bm{v}_i = \langle \bm{v}_i, \bm{p}_j\rangle_g \bm{p}_j$.
For the third assertion, by definition, 
\begin{align*}
    \Delta^{\Sigma} f =& \mbox{div}^{\Sigma}(\mbox{grad}^{\Sigma} f) =
  \mbox{div}^{\Sigma}\big(P\,\mbox{grad}^{\mathcal{M}}
  \widetilde{f}\,\big) = \mbox{div}^{\Sigma} \big(\langle
  \mbox{grad}^{\mathcal{M}}\widetilde{f},
    \bm{p}_i\rangle_g\, \bm{p}_i\big) \\
    =& \mbox{div}^{\Sigma} \big((\bm{p}_i\widetilde{f}\,)\bm{p}_i\big) =
    \sum_{i=1}^d\bm{p}_i^2 \widetilde{f} + (\mbox{div}^{\Sigma} \bm{p}_i) \bm{p}_i
    \widetilde{f}
    = \sum_{i=1}^d\bm{p}_i^2 \widetilde{f} - \big(P
    \nabla^{\mathcal{M}}_{\bm{p}_i}
    \bm{p}_i\big) \widetilde{f}\,,
\end{align*}
where the second assertion has been used in the last equality.

For the fourth assertion, 
starting from the third assertion, using the definition of
$\mbox{Hess}^{\mathcal{M}}$ in (\ref{hessian-chart}),
and applying Proposition~\ref{prop-mean} below, we obtain
\begin{align*}
  \Delta^{\Sigma} f = &
   \sum_{i=1}^d \bm{p}_i^2 f - P \nabla^{\mathcal{M}}_{\bm{p}_i} \bm{p}_i f\\
   =& \mbox{Hess}^{\mathcal{M}} \widetilde{f}( \bm{p}_i, \bm{p}_i) + 
  \big[(I-P)\nabla^{\mathcal{M}}_{\bm{p}_i} \bm{p}_i\big] \widetilde{f}
= \mbox{Hess}^{\mathcal{M}} \widetilde{f}( \bm{p}_i, \bm{p}_i) + H\widetilde{f}\,.
\end{align*}

For the last assertion, when $g=\mbox{\textnormal{id}}$,
we have $\Gamma_{ij}^l \equiv 0$, for $\forall 1\le i,j,l \le d$.  
  Also, it follows from (\ref{p-i-j}) that $P_{i,j}=P_{ij} = P_{ji}$ and $P_{il}
  P_{lj} = P_{ij}$. 
  We obtain  
  \begin{align*}
    &(\mbox{div}^{\Sigma} \bm{p}_i) \bm{p}_i\\
    =& \langle \nabla^{\mathcal{M}}_{\bm{p}_j}\bm{p}_i,
    \bm{p}_j\rangle_g\, \bm{p}_i\\
    =& P_{j, l}P_{j,j'} P_{i,i'} \langle
    \nabla^{\mathcal{M}}_{\bm{e}_l}
    \big(P_{i, r}\bm{e}_r\big),
    \bm{e}_{j'} \rangle_g\, \bm{e}_{i'}\\
    =& P_{lr} \frac{\partial P_{ir}}{\partial x_l}  P_{ii'}\,\bm{e}_{i'}\\
  =&\Big[\frac{\partial P_{il}}{\partial x_l}  P_{ii'} -  \frac{\partial
P_{lr}}{\partial x_l} P_{ri'}\Big]\,\bm{e}_{i'} \\
=& 0\,,
  \end{align*}
  and the other assertions follow accordingly.
\end{proof}
\begin{prop}
  Let $H$ be the mean curvature vector defined in (\ref{h-div-thm}) on the submanifold $\Sigma$. 
  We have 
  \begin{align}
    \begin{split}
    H = &(I-P)\nabla^{\mathcal{M}}_{\bm{p}_i} \bm{p}_{i} \\
    =& 
      -(\Psi^{-1})_{\alpha\gamma}
\bigg[\frac{1}{2}
 (Pa)_{ij} (a\nabla\xi_\alpha)_l\frac{\partial (a^{-1})_{ij}}{\partial x_l} +
 P_{il}\frac{\partial (a\nabla\xi_\alpha)_l}{\partial x_i} \bigg] 
       a\nabla\xi_\gamma \,.
\end{split}
    \label{prop-mean-eqn-1}
  \end{align}
  In the special case when $g=a=\mbox{\textnormal{id}}$, we have 
  \begin{align}
    H = P_{jl}\frac{\partial P_{il}}{\partial x_j}\bm{e}_i= 
    -\big[(\Psi^{-1})_{\alpha\gamma}  P_{ij}\, \partial^2_{ij}\xi_\alpha\big]
    \nabla\xi_\gamma\,.
    \label{prop-mean-eqn-2}
  \end{align}
  \label{prop-mean}
\end{prop}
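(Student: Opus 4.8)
The plan is to establish the coordinate-free identity $H=(I-P)\nabla^{\mathcal{M}}_{\bm{p}_i}\bm{p}_i$ first, directly from the defining relation (\ref{h-div-thm}), and then to extract the explicit expression by a coordinate computation in which the Hessian is used to bypass derivatives of the projection matrix.

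First I would prove $H=(I-P)\nabla^{\mathcal{M}}_{\bm{p}_i}\bm{p}_i$. Since $\Sigma$ is compact without boundary, the divergence theorem gives $\int_\Sigma \mbox{div}^{\Sigma}\bm{w}\,d\nu_g=0$ for every tangent field $\bm{w}$ on $\Sigma$; applying this to $\bm{w}=P\bm{v}$ reduces the left-hand side of (\ref{h-div-thm}) to $\int_\Sigma \mbox{div}^{\Sigma}\big((I-P)\bm{v}\big)\,d\nu_g$. For the normal field $\bm{\eta}=(I-P)\bm{v}$ I would differentiate the identities $\langle\bm{\eta},\bm{p}_i\rangle_g=0$ along $\bm{p}_i$ and sum, which (using assertion~1 of Lemma~\ref{lemma-n-1}) yields $\mbox{div}^{\Sigma}\bm{\eta}=-\langle\bm{\eta},(I-P)\nabla^{\mathcal{M}}_{\bm{p}_i}\bm{p}_i\rangle_g=-\langle\bm{v},(I-P)\nabla^{\mathcal{M}}_{\bm{p}_i}\bm{p}_i\rangle_g$, where the last step uses that $I-P$ is $g$-self-adjoint (equivalent to $aP^T=Pa$ in (\ref{p-fact})). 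Comparing with (\ref{h-div-thm}) and letting $\bm{v}$ be arbitrary identifies $H=(I-P)\nabla^{\mathcal{M}}_{\bm{p}_i}\bm{p}_i$ pointwise, and shows in passing that $H$ is normal.

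Next, for the explicit formula I would project onto the normal space via $P=\mbox{id}-a\nabla\xi\,\Psi^{-1}\nabla\xi^T$, so that $(I-P)\bm{w}=(\Psi^{-1})_{\alpha\gamma}(\nabla\xi_\gamma\cdot\bm{w})\,a\nabla\xi_\alpha$. The key observation is that each $\bm{p}_i$ is tangent to $\Sigma$, i.e. $\bm{p}_i\xi_\gamma=0$ by (\ref{p-fact}), so that directly from the Hessian definition (\ref{hessian-chart}) one has $(\nabla^{\mathcal{M}}_{\bm{p}_i}\bm{p}_i)\xi_\gamma=-\mbox{Hess}^{\mathcal{M}}\xi_\gamma(\bm{p}_i,\bm{p}_i)$; this avoids ever differentiating the components $P_{i,j}$. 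Summing over $i$ with $\sum_i(\bm{p}_i)_l(\bm{p}_i)_r=(Pa)_{lr}$ from (\ref{p-i-j}) gives $H=-(\Psi^{-1})_{\alpha\gamma}\big[(Pa)_{lr}\big(\partial^2_{lr}\xi_\gamma-\Gamma^m_{lr}\partial_m\xi_\gamma\big)\big]\,a\nabla\xi_\alpha$, which already has the shape of (\ref{prop-mean-eqn-1}) after interchanging $\alpha,\gamma$ (allowed since $\Psi^{-1}$ is symmetric).

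The remaining and most tedious step, which I expect to be the main obstacle, is to rewrite the bracket into the two terms displayed in (\ref{prop-mean-eqn-1}). Here I would substitute the Christoffel symbols (\ref{gamma-det-exp}), use the symmetry of $(Pa)_{lr}$ to collapse the first two summands of $\Gamma^m_{lr}$ into one, and convert between $\partial(a^{-1})$ and $\partial a$ through $\partial_l(a^{-1})_{rs}=-(a^{-1})_{ra'}(\partial_l a_{a'b'})(a^{-1})_{b's}$ together with $(Pa)a^{-1}=P$. This index bookkeeping separates the $\tfrac{1}{2}(Pa)_{ij}(a\nabla\xi_\alpha)_l\partial_l(a^{-1})_{ij}$ contribution and, after recombining the $(Pa)_{lr}\partial^2_{lr}\xi_\gamma$ term with the surviving first-derivative term, produces $P_{il}\partial_i(a\nabla\xi_\alpha)_l$. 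Finally, setting $g=a=\mbox{id}$ annihilates all Christoffel symbols and all $\partial(a^{-1})$ factors, so the bracket collapses to $P_{ij}\partial^2_{ij}\xi_\alpha$, giving the second form in (\ref{prop-mean-eqn-2}); the first form $P_{jl}\partial_j P_{il}\,\bm{e}_i$ follows from specializing $(I-P)\nabla^{\mathcal{M}}_{\bm{p}_i}\bm{p}_i$ to $a=\mbox{id}$, which reduces to $\nabla^{\mathcal{M}}_{\bm{p}_i}\bm{p}_i$ by assertion~5 of Lemma~\ref{lemma-n-1}.
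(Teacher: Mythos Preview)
Your proposal is correct and, for the first identity $H=(I-P)\nabla^{\mathcal{M}}_{\bm{p}_i}\bm{p}_i$, it follows exactly the paper's argument (splitting $\bm{v}$ into tangential and normal parts, killing the tangential divergence with the divergence theorem on $\Sigma$, and then differentiating the orthogonality $\langle(I-P)\bm{v},\bm{p}_i\rangle_g=0$ along $\bm{p}_i$).

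For the explicit coordinate expression you take a slightly different route than the paper. The paper returns to the integral identity (\ref{h-div-thm}) a second time and computes $\mbox{div}^{\Sigma}\big[(\Psi^{-1})_{\alpha\gamma}\langle\bm{v},a\nabla\xi_\gamma\rangle_g\,a\nabla\xi_\alpha\big]$ directly, which via $\langle a\nabla\xi_\alpha,\bm{p}_i\rangle_g=0$ reduces to $(\Psi^{-1})_{\alpha\gamma}\langle\bm{v},a\nabla\xi_\gamma\rangle_g\,\langle\nabla^{\mathcal{M}}_{\bm{p}_i}(a\nabla\xi_\alpha),\bm{p}_i\rangle_g$; this last factor is then expanded using the Christoffel symbols. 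You instead chain through the already-established pointwise formula $H=(I-P)\nabla^{\mathcal{M}}_{\bm{p}_i}\bm{p}_i$ and invoke the Hessian identity $(\nabla^{\mathcal{M}}_{\bm{p}_i}\bm{p}_i)\xi_\gamma=-\mbox{Hess}^{\mathcal{M}}\xi_\gamma(\bm{p}_i,\bm{p}_i)$, landing on $(Pa)_{lr}\big(\partial^2_{lr}\xi_\gamma-\Gamma^m_{lr}\partial_m\xi_\gamma\big)$. The two routes are equivalent because $a\nabla\xi_\alpha=\mbox{grad}^{\mathcal{M}}\xi_\alpha$ and $\langle\nabla^{\mathcal{M}}_{\bm{p}_i}\mbox{grad}^{\mathcal{M}}\xi_\alpha,\bm{p}_i\rangle_g=\mbox{Hess}^{\mathcal{M}}\xi_\alpha(\bm{p}_i,\bm{p}_i)$, so both end up tracing the Hessian of $\xi_\alpha$ over the tangent directions; the remaining index bookkeeping to reach the two displayed terms is of the same tedium in both approaches. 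Your derivation is arguably slightly cleaner in that it does not revisit the weak definition, and your treatment of the $a=\mbox{id}$ case via assertion~5 of Lemma~\ref{lemma-n-1} matches the paper's.
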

\begin{proof}
  Given a tangent vector field $\bm{v}$ on $\mathcal{M}$, 
  from the definition of $P$ we have $\bm{v} = P\bm{v} + (\Psi^{-1})_{\alpha\gamma} \langle \bm{v},
  a\nabla\xi_\gamma\rangle_g\, a\nabla\xi_\alpha$.
  Since $P\bm{v}$ is a tangent vector field on $\Sigma$, using (\ref{h-div-thm}) and the
  divergence theorem on $\Sigma$, we know 
  \begin{align}
    \int_{\Sigma} \langle H, \bm{v}\rangle_g\, d\nu_g 
    = -\int_{\Sigma} \mbox{div}^{\Sigma} \big[(I - P)\bm{v}\big]\, d\nu_g = 
    -\int_{\Sigma} \mbox{div}^{\Sigma} \big[(\Psi^{-1})_{\alpha\gamma}
    \langle \bm{v}, a\nabla\xi_\gamma\rangle_g\, a\nabla\xi_\alpha\big]\, d\nu_g\,.
  \label{mean-curvature-eqn}
  \end{align}
  For the first expression, we notice that $\langle 
  (I - P)\bm{v}, \bm{p}_i\rangle_g \equiv 0$, $1 \le i
  \le d$. Applying Lemma~\ref{lemma-n-1}, we have 
  \begin{align*}
    &-\int_{\Sigma} \mbox{div}^{\Sigma} \big[(I - P)\bm{v}\big] d\nu_g \\
    =& -\int_{\Sigma} \langle \nabla^{\mathcal{M}}_{\bm{p}_i} \big[(I - P)\bm{v}\big], \bm{p}_i\rangle_g\, d\nu_g \\
    =& -\int_{\Sigma} \bm{p}_i\langle (I -
  P)\bm{v}, \bm{p}_i\rangle_g\, d\nu_g +
  \int_{\Sigma} \langle (I - P)\bm{v}, \nabla^{\mathcal{M}}_{\bm{p}_i}\bm{p}_i
  \rangle_g\, d\nu_g 
    \\
    =& \int_{\Sigma} \langle \bm{v}, (I -
    P)\nabla^{\mathcal{M}}_{\bm{p}_i}\bm{p}_i\rangle_g\, d\nu_g \,.
  \end{align*}
  Comparing the last equality above with (\ref{mean-curvature-eqn}), we
  conclude that $H=(I - P)\nabla^{\mathcal{M}}_{\bm{p}_i}\bm{p}_i$.
  
    For the second expression, we notice that $\langle
    a\nabla\xi_\alpha, \bm{p}_i\rangle_g=0$, 
and also recall the expressions (\ref{gamma-det-exp}), (\ref{psi-ij}) and (\ref{p-i-j}).
Applying Lemma~\ref{lemma-n-1}, integrating by parts, and noticing the cancellation
of some terms, we can derive 
  \begin{align*}
    &\mbox{div}^{\Sigma} \big[(\Psi^{-1})_{\alpha\gamma}
    \langle \bm{v}, a\nabla\xi_\gamma\rangle_g\, a\nabla\xi_\alpha\big]\\
    =& \langle \nabla^{\mathcal{M}}_{\bm{p}_i} 
    \big[(\Psi^{-1})_{\alpha\gamma}
      \langle \bm{v}, a\nabla\xi_\gamma\rangle_g\, a\nabla\xi_\alpha\big], \bm{p}_i\rangle_g\, \\
=& (\Psi^{-1})_{\alpha\gamma}
      \langle \bm{v}, a\nabla\xi_\gamma\rangle_g\, 
       \langle
       \nabla^{\mathcal{M}}_{\bm{p}_i} (a\nabla\xi_\alpha), \bm{p}_i\rangle_g\, \\
=& (\Psi^{-1})_{\alpha\gamma}
      \langle \bm{v}, a\nabla\xi_\gamma\rangle_g\, 
      P_{i, j}P_{i,l} \langle \nabla^{\mathcal{M}}_{\bm{e}_j}
      \big((a\nabla\xi_\alpha)_r \bm{e}_r\big), \bm{e}_l\rangle_g\, \\
  =& (\Psi^{-1})_{\alpha\gamma}
      \langle \bm{v}, a\nabla\xi_\gamma\rangle_g\, 
      (Pa)_{jl}
      \Big[(a\nabla\xi_\alpha)_r \Gamma^i_{jr}(a^{-1})_{il} +
\frac{\partial (a\nabla\xi_\alpha)_r}{\partial x_j} (a^{-1})_{lr}\Big] \\
=& (\Psi^{-1})_{\alpha\gamma}
      \langle \bm{v}, a\nabla\xi_\gamma\rangle_g\, 
      (Pa)_{ij}
\Big[\frac{1}{2}
  (a\nabla\xi_\alpha)_l\frac{\partial (a^{-1})_{ij}}{\partial x_l} +
\frac{\partial (a\nabla\xi_\alpha)_l}{\partial x_i} (a^{-1})_{lj}\Big]\,.
  \end{align*}
  The second identity in (\ref{prop-mean-eqn-1}) is obtained after comparing the above expression with (\ref{mean-curvature-eqn}) .

In the case $g=a=\mbox{\textnormal{id}}$, we have $\Gamma_{il}^r \equiv 0$, 
$1 \le i,l,r \le d$. 
It follows that
\begin{align*}
  \mbox{div}^{\Sigma}
  \big[(\Psi^{-1})_{\alpha\gamma}
    \langle \bm{v}, a\nabla\xi_\gamma\rangle_g\, a\nabla\xi_\alpha\big]
  = (\Psi^{-1})_{\alpha\gamma}
      \langle \bm{v}, a\nabla\xi_\gamma\rangle_g\,
      P_{ij}\,\partial^2_{ij}\xi_\alpha 
\end{align*}
and we obtain that 
$H=-\big[(\Psi^{-1})_{\alpha\gamma}  P_{ij} \, \partial^2_{ij}\xi_\alpha\big] \nabla\xi_\gamma$.
Using (\ref{p-ij}) and (\ref{p-fact}), we have 
    \begin{align*}
      P_{jl}\frac{\partial P_{il}}{\partial x_j}\bm{e}_i
      = -P_{jl}\frac{\partial\big((\Psi^{-1})_{\alpha\gamma}
	\partial_l\xi_\alpha
      \partial_i\xi_\gamma\big)}{\partial x_j} \bm{e}_i
      =- \big[(\Psi^{-1})_{\alpha\gamma}  P_{jl}\,
      \partial^2_{jl}\xi_\alpha\, \partial_i\xi_\gamma\big]
      \bm{e}_i
      =H\,,
    \end{align*}
and therefore the first expression in (\ref{prop-mean-eqn-2}) holds as well.
\end{proof}
Next, we study the Laplace-Beltrami operator $\Delta^{\Sigma}$ on the submanifold
$\Sigma$. Clearly, $\Delta^{\Sigma}$ is self-adjoint and, similar to (\ref{integrate-by-part}), 
we have the integration by parts formula on $\Sigma$ with respect to the
measure $\nu_g$, as 
\begin{align}
  \int_{\Sigma} (\Delta^{\Sigma} f_1)\,f_2\,d\nu_g = - \int_{\Sigma}
  \langle \mbox{grad}^{\Sigma} f_1, \mbox{grad}^{\Sigma} f_2\rangle_g\, d\nu_g
  = \int_{\Sigma} (\Delta^{\Sigma} f_2)f_1\,d\nu_g\,,  
  \label{integrate-by-part-sigma}
\end{align}
for $\forall f_1,\, f_2 \in C^\infty(\Sigma)$.
The expression of $\Delta^{\Sigma}$ can be computed explicitly and this is the
content of the following proposition. 
\begin{prop}
  Let $\Sigma$ be the submanifold of $\mathcal{M}$ defined in
  (\ref{levelset-sigma}), $P$ be the projection matrix in (\ref{p-ij}),
  and $\Delta^{\Sigma}$ be the Laplace-Beltrami operator on $\Sigma$.   We have 
  \begin{align}
    \Delta^{\Sigma}
    =& 
    (Pa)_{ij} \frac{\partial^2 }{\partial x_i\partial x_j} 
    + \Big[\frac{\partial (Pa)_{ij}}{\partial x_j} + \frac{1}{2} (Pa)_{ij} 
      \frac{\partial \ln \big((\det a)^{-1}\det(\nabla\xi^T a \nabla\xi)\big)}{\partial
x_j}\Big] \frac{\partial }{\partial x_i}\,.
\label{prop-laplace-a}
  \end{align}
  In the special case when $g=a=\mbox{\normalfont{id}}$, we have 
  \begin{align}
    \begin{split}
       \Delta^{\Sigma} =& \sum_{i=1}^d \bm{p}^2_i=  
    P_{ij} \frac{\partial^2 }{\partial x_i\partial x_j}
+ P_{lj}\frac{\partial P_{li}}{\partial x_j}
\frac{\partial}{\partial x_i}\\
=& P_{ij} \frac{\partial^2 }{\partial x_i\partial x_j}
      + H_i \frac{\partial}{\partial x_i}\,,
\end{split}
\label{prop-laplace-rn-eqn}
  \end{align}
  where $H = H_i \bm{e}_i$ is the mean curvature vector of the submanifold
  $\Sigma$.  
  \label{laplaceN-state}
\end{prop}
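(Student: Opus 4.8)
The plan is to characterize $\Delta^{\Sigma}$ through its weak form (\ref{integrate-by-part-sigma}) and to read off the coordinate expression after an integration by parts in the ambient space. The first step is to simplify the Dirichlet form. For $f_1, f_2 \in C^\infty(\Sigma)$ with extensions $\widetilde f_1, \widetilde f_2$, using $\mbox{grad}^{\Sigma} f = Pa\nabla\widetilde f$ together with the identities $aP^T = Pa$, $P^2 = P$ from (\ref{p-fact}), one computes
\[
\langle\mbox{grad}^{\Sigma} f_1,\mbox{grad}^{\Sigma} f_2\rangle_g
=(Pa\nabla\widetilde f_1)^{T} a^{-1}(Pa\nabla\widetilde f_2)
=(Pa)_{ij}\,\partial_i\widetilde f_1\,\partial_j\widetilde f_2,
\]
where the last equality uses $aP^Ta^{-1}=P$ and $P^2=P$. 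Hence (\ref{integrate-by-part-sigma}) reduces to $\int_\Sigma(\Delta^{\Sigma} f_1)f_2\,d\nu_g = -\int_\Sigma (Pa)_{ij}\,\partial_i\widetilde f_1\,\partial_j\widetilde f_2\,d\nu_g$, and it remains to move all derivatives onto $\widetilde f_1$ by an integration by parts that is compatible with the surface measure $\nu_g$.

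To perform this integration by parts in ambient coordinates I would invoke the co-area formula on $\mathcal{M}=(\mathbb{R}^d,g)$. The $g$-Jacobian of $\xi$ is $\sqrt{\det(\nabla\xi^T a\nabla\xi)}=\sqrt{\det\Psi}$ and the volume form is $dm=(\det a)^{-1/2}dx$, so the co-area formula disintegrates the surface measures of the level sets against Lebesgue measure with density $\rho=\big((\det a)^{-1}\det(\nabla\xi^T a\nabla\xi)\big)^{1/2}$. Since $Pa\nabla\widetilde f_1$ lies in the range of $P(x)$ at each point, it is tangent to every nearby level set, so I can rewrite the surface integral as a volume integral weighted by $\rho$, apply the ordinary integration by parts in $\mathbb{R}^d$, and disintegrate back. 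This yields the weighted-divergence form $\Delta^{\Sigma} f=\frac{1}{\rho}\,\partial_i\big(\rho\,(Pa)_{ij}\,\partial_j\widetilde f\big)$ on $\Sigma$; expanding the derivative and using the symmetry of $Pa$ reproduces exactly (\ref{prop-laplace-a}), the logarithmic term arising from $\frac{1}{\rho}\partial_j\rho=\frac12\partial_j\ln\rho^2$.

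For the special case $a=\mbox{id}$ I would argue more directly, since then $g$ is Euclidean, $\bm{\sigma}_i=\bm{e}_i$, $\bm{p}_i=P\bm{e}_i$, and $Pa=P$ is a symmetric projection. A direct expansion gives $\sum_i\bm{p}_i^2=P_{ij}\partial_i\partial_j+\big(\sum_{l,j}P_{lj}\,\partial_j P_{li}\big)\partial_i$, which is the first line of (\ref{prop-laplace-rn-eqn}). To identify the first-order coefficient with the mean curvature vector, set $\bm{W}:=\sum_i\nabla_{\bm{p}_i}\bm{p}_i$ (ordinary directional derivatives); differentiating $P^2=P$ and contracting shows $W_n=D_n-(PD)_n$ with $D_i=\partial_r P_{ir}$, whence $P\bm{W}=(P-P^2)D=0$, i.e. $\bm{W}$ is normal. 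Combined with the tight-frame identity $\bm{v}=\langle\bm{v},\bm{p}_i\rangle_g\bm{p}_i$, the operator $\sum_i\bm{p}_i^2$ carries the correct second-order symbol $P_{ij}$ and a purely normal first-order part, so it agrees with $\Delta^{\Sigma}$ and $\bm{W}$ equals the trace of the second fundamental form, which by (\ref{h-div-thm}) is precisely $H$; this gives the second line of (\ref{prop-laplace-rn-eqn}).

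The main obstacle is the middle step: justifying the weighted-divergence representation rigorously. Two points need care. First, establishing that $\nu_g$ disintegrates with density $\rho$ requires the correct co-area formula for the vector-valued map $\xi$ on the weighted manifold $\mathcal{M}$, including the factor $(\det a)^{-1/2}$. Second, the identity $\Delta^{\Sigma} f=\frac{1}{\rho}\partial_i(\rho(Pa)_{ij}\partial_j\widetilde f)$ involves ambient, hence normal, derivatives of the extension, so one must check independence of the chosen extension on $\Sigma$; this is guaranteed because $Pa\nabla\widetilde f$ is automatically a tangent-to-level-set extension of $\mbox{grad}^{\Sigma} f$, but it is where the argument is most delicate. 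An alternative that avoids co-area is to compute through the tight frame, writing $\Delta^{\Sigma} f=\sum_i\mbox{Hess}^{\mathcal{M}}\widetilde f(\bm{p}_i,\bm{p}_i)+\langle\mbox{grad}^{\mathcal{M}}\widetilde f,H\rangle_g$ and grinding out the Christoffel symbols of $g=a^{-1}$; there the obstacle shifts to showing that the combination $(Pa)_{ij}\Gamma^l_{ij}$ together with $H$ collapses to the clean $\ln\big((\det a)^{-1}\det\Psi\big)$ term.
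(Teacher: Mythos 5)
Your argument is correct, but it takes a genuinely different route from the paper's. The paper proves (\ref{prop-laplace-a}) by a pointwise computation in the frame $\bm{p}_i = P\bm{\sigma}_i$: it first establishes (Lemma~\ref{lemma-n-1}) that $\Delta^{\Sigma} f = \mbox{Hess}^{\mathcal{M}}\widetilde f(\bm{p}_i,\bm{p}_i) + H\widetilde f$, computes the mean curvature $H=(I-P)\nabla^{\mathcal{M}}_{\bm{p}_i}\bm{p}_i$ explicitly (Proposition~\ref{prop-mean}), expands the Hessian with the Christoffel symbols of $g=a^{-1}$, and collapses the resulting first-order terms via Lemma~\ref{lemma-identity-in-i12} --- precisely the ``grinding'' you describe as your alternative. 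Your primary route instead characterizes $\Delta^{\Sigma}$ weakly through (\ref{integrate-by-part-sigma}) and obtains the divergence form $\rho^{-1}\partial_i\big(\rho\,(Pa)_{ij}\partial_j\,\cdot\big)$ with $\rho=\big((\det a)^{-1}\det\Psi\big)^{1/2}$ by disintegrating $dm$ over the level sets of $\xi$. This buys a transparent explanation of where the logarithmic drift comes from (it is just $\partial_j\ln\rho$) and avoids Christoffel symbols entirely; the price is the co-area/localization step, which does go through: testing against $\chi(\xi(x))$, the ambient integration by parts produces no cross terms because $(Pa)_{ij}\partial_j\xi_\alpha=0$ in a neighborhood of $\Sigma$, the identity then holds on each level set by continuity in $y$, and extension-independence of $(Pa)\nabla\widetilde f_1$ on $\Sigma$ follows from the same relation, as you note (your computed density $\rho$ is consistent with Lemma~\ref{lemma-sigma-relation} combined with the Euclidean co-area formula, so that step is sound). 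Your treatment of the special case --- the tight-frame identity $\sum_i\bm{p}_i\otimes\bm{p}_i=P$, normality of $\bm{W}=\sum_i\nabla_{\bm{p}_i}\bm{p}_i$ obtained by differentiating $P^2=P$, and the identification of $\bm{W}$ with the trace of the second fundamental form --- reproduces exactly the content of parts 3--5 of Lemma~\ref{lemma-n-1} and of Proposition~\ref{prop-mean}; the one point you should write out rather than assert is that $H$ as defined by the first-variation identity (\ref{h-div-thm}) coincides with $\mathrm{tr}\,\mathrm{II}$, which the paper derives from the divergence theorem on $\Sigma$ in the proof of Proposition~\ref{prop-mean}.
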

\begin{proof}
  Let $f \in C^{\infty}(\Sigma)$ and $\widetilde{f} \in
  C^{\infty}(\mathcal{M})$ be its extension to $\mathcal{M}$.
  Using Lemma~\ref{lemma-n-1} and Proposition~\ref{prop-mean}, we have 
  \begin{align*}
    \Delta^{\Sigma} f  
    =& \mbox{\textnormal{Hess}}^{\mathcal{M}} \widetilde{f}( \bm{p}_r, \bm{p}_r) + H\widetilde{f}\\
  =& P_{r, j}P_{r,l} \mbox{\textnormal{Hess}}^{\mathcal{M}} \widetilde{f}(
    \bm{e}_j,
    \bm{e}_l) + H\widetilde{f}\\
    =& (Pa)_{jl} \Big(\frac{\partial^2 \widetilde{f}}{\partial
x_j\partial x_l} - \Gamma_{jl}^i \frac{\partial \widetilde{f}}{\partial
x_i}\Big)\\
&- 
(\Psi^{-1})_{\alpha\gamma}
\Big[\frac{1}{2}
      (Pa)_{jl} (a\nabla\xi_\gamma)_r\frac{\partial (a^{-1})_{jl}}{\partial x_r} +
      P_{lr}
\frac{\partial (a\nabla\xi_\gamma)_r}{\partial x_l} \Big] 
(a\nabla\xi_\alpha)_i \frac{\partial \widetilde{f}}{\partial x_i} \,.
  \end{align*}
  Notice that we have already obtained the coefficients of the second order
  derivative terms. For the terms of the first order derivatives, let us denote 
  \begin{align}
    \begin{split}
  I_1=  &-(Pa)_{jl} \Gamma_{jl}^i\\
    I_2 =& 
-\frac{1}{2}
    (\Psi^{-1})_{\alpha\gamma}
      (Pa)_{jl}
(a\nabla\xi_\alpha)_i (a\nabla\xi_\gamma)_r\frac{\partial (a^{-1})_{jl}}{\partial x_r}\\
I_3 = &- (\Psi^{-1})_{\alpha\gamma}
P_{lr} (a\nabla\xi_\alpha)_i \frac{\partial
      (a\nabla\xi_\gamma)_r}{\partial x_l}\,. 
    \end{split}
      \label{i-123}
    \end{align}
    Using the expression of $Pa$ in (\ref{p-i-j}), the property $Pa\nabla\xi_\gamma = 0$, and integrating by parts,
    we easily obtain
    \begin{align}
      \begin{split}
      I_2=& \frac{1}{2}\big((Pa)_{ir} - a_{ir}\big) (Pa)_{jl} \frac{\partial
  (a^{-1})_{jl}}{\partial x_r} \\
  I_3 =& \frac{\partial P_{lr}}{\partial x_l}\big(a_{ir} - (Pa)_{ir}\big)\,.
\end{split}
  \label{i-23}
    \end{align}
    For $I_1$, direct calculation using (\ref{gamma-det-exp}) gives 
    \begin{align}
      \begin{split}
      I_1 =& - \frac{1}{2}
    (Pa)_{jl}
    a_{ir}\Big(\frac{\partial (a^{-1})_{lr}}{\partial x_j}
    + \frac{\partial (a^{-1})_{jr}}{\partial x_l} - \frac{\partial
    (a^{-1})_{jl}}{\partial x_r}\Big)\\
  = &- (Pa)_{jl}  a_{ir}\frac{\partial (a^{-1})_{lr}}{\partial x_j}
    + \frac{1}{2}
    (Pa)_{jl}
  a_{ir} \frac{\partial (a^{-1})_{jl}}{\partial x_r} \\
  =&  \frac{\partial (Pa)_{ij}}{\partial x_j} 
- \frac{\partial P_{jr}}{\partial x_j} a_{ir}
    + \frac{1}{2}
    (Pa)_{jl}
  a_{ir} \frac{\partial (a^{-1})_{jl}}{\partial x_r} \,.
\end{split}
\label{i-1}
    \end{align}
    Therefore, 
    \begin{align*}
      I_1 + I_2 + I_3
      = &
\frac{\partial (Pa)_{ij}}{\partial x_j} 
-\frac{\partial P_{lr}}{\partial x_l}(Pa)_{ir}
    + \frac{1}{2}
    (Pa)_{jl}
  a_{ir} \frac{\partial (a^{-1})_{jl}}{\partial x_r} + 
\frac{1}{2}\big((Pa)_{ir} - a_{ir}\big) (Pa)_{jl} \frac{\partial
  (a^{-1})_{jl}}{\partial x_r} \\
=  & \frac{\partial (Pa)_{ij}}{\partial x_j} 
-\frac{\partial P_{lr}}{\partial x_l}(Pa)_{ir}
+ \frac{1}{2}(Pa)_{ir} (Pa)_{jl} \frac{\partial (a^{-1})_{jl}}{\partial x_r} \,.
    \end{align*}
    Applying Lemma~\ref{lemma-identity-in-i12} below to handle the last term above, we conclude
  \begin{align*}
I_1 + I_2 + I_3 =&
 \frac{\partial (Pa)_{ij}}{\partial x_j} 
 -\frac{1}{2}(Pa)_{ir}\frac{\partial \ln\mbox{\normalfont{det}}\,a}{\partial x_r}
 +\frac{1}{2}(Pa)_{ir}\frac{\partial \ln\mbox{\normalfont{det}}\,\Psi}{\partial x_r}\,.
\end{align*}

Finally, when $g=a=\mbox{\normalfont{id}}$, applying Lemma~\ref{lemma-n-1}, we can obtain
  \begin{align*}
\Delta^{\Sigma} f 
=& \mbox{\textnormal{Hess}}^{\mathcal{M}} \widetilde{f}( \bm{p}_i, \bm{p}_i) +
    H\widetilde{f}\\
  =& P_{l, i}P_{l,j}
\Big(\frac{\partial^2 \widetilde{f}}{\partial x_i\partial x_j} - \Gamma_{ij}^l \frac{\partial \widetilde{f}}{\partial x_l}\Big)
  + H\widetilde{f}\\
  =& P_{ij}
  \frac{\partial^2 \widetilde{f}}{\partial x_i\partial x_j} + H_i
  \frac{\partial\widetilde{f}}{\partial x_i}\,.
\end{align*}
The other equality in (\ref{prop-laplace-rn-eqn}) follows from Proposition~\ref{prop-mean}.
\end{proof}
We point out that the proof of Proposition~\ref{laplaceN-state}
is indeed valid for a general Riemannian manifold $\mathcal{M}$ and its level set $\Sigma$ as well. In this case, 
(\ref{prop-laplace-a}) holds true on a local coordinate of the manifold $\mathcal{M}$. 

The following identity has been used in the above proof, and will be
useful in Appendix~\ref{app-sec-numerical} as well.
\begin{lemma}
  \begin{align*}
     \frac{1}{2}(Pa)_{ir}  (Pa)_{jl} \frac{\partial (a^{-1})_{jl}}{\partial x_r} 
     = -\frac{1}{2}(Pa)_{ir}\frac{\partial \ln\mbox{\normalfont{det}}\,a}{\partial x_r}
+ (Pa)_{ir}  
\frac{\partial P_{lr}}{\partial x_l}
+ \frac{1}{2}(Pa)_{ir}  \frac{\partial \ln\mbox{\normalfont{det}}\Psi}{\partial x_r} \,.
    \end{align*}
    \label{lemma-identity-in-i12}
\end{lemma}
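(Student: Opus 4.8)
The plan is to reduce the whole identity to three elementary tools that are already available: the explicit formula $(Pa)_{jl}=a_{jl}-(\Psi^{-1})_{\alpha\gamma}(a\nabla\xi_\alpha)_j(a\nabla\xi_\gamma)_l$ from (\ref{p-i-j}); the annihilation property $(Pa)_{ir}\,\partial_r\xi_\gamma=0$, which follows from $Pa\nabla\xi_\gamma=aP^T\nabla\xi_\gamma=0$ by (\ref{p-fact}); and Jacobi's formula applied twice, once to $a$ in the form $a_{jl}\frac{\partial(a^{-1})_{jl}}{\partial x_r}=-\frac{\partial\ln\det a}{\partial x_r}$, and once to $\Psi$ in the form $(\Psi^{-1})_{\alpha\gamma}\frac{\partial\Psi_{\alpha\gamma}}{\partial x_r}=\frac{\partial\ln\det\Psi}{\partial x_r}$.

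First I would substitute the formula for $(Pa)_{jl}$ into the left-hand side and split it into an ``$a$-part'' and a ``$\Psi^{-1}$-part''. The $a$-part is $\frac12(Pa)_{ir}\,a_{jl}\frac{\partial(a^{-1})_{jl}}{\partial x_r}$, which by Jacobi's formula for $a$ immediately produces the term $-\frac12(Pa)_{ir}\frac{\partial\ln\det a}{\partial x_r}$ of the right-hand side. The entire problem therefore reduces to identifying the remaining $\Psi^{-1}$-part with $(Pa)_{ir}\frac{\partial P_{lr}}{\partial x_l}+\frac12(Pa)_{ir}\frac{\partial\ln\det\Psi}{\partial x_r}$.

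The central step is to rewrite the factor $(a\nabla\xi_\alpha)_j(a\nabla\xi_\gamma)_l\frac{\partial(a^{-1})_{jl}}{\partial x_r}$ occurring in the $\Psi^{-1}$-part. Since $(a\nabla\xi_\alpha)_j(a^{-1})_{jl}=\partial_l\xi_\alpha$, differentiating the identity $(a\nabla\xi_\alpha)_j(a\nabla\xi_\gamma)_l(a^{-1})_{jl}=\Psi_{\alpha\gamma}$ with respect to $x_r$ rewrites this factor as $\frac{\partial\Psi_{\alpha\gamma}}{\partial x_r}$ minus two terms of the form $\frac{\partial(a\nabla\xi_\alpha)_j}{\partial x_r}\partial_j\xi_\gamma$ that coincide upon contraction with the symmetric $(\Psi^{-1})_{\alpha\gamma}$. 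Contracting with $(\Psi^{-1})_{\alpha\gamma}$ and invoking Jacobi's formula for $\Psi$ turns the $\frac{\partial\Psi}{\partial x_r}$ contribution into $-\frac12(Pa)_{ir}\frac{\partial\ln\det\Psi}{\partial x_r}$, and leaves a single residual $(Pa)_{ir}(\Psi^{-1})_{\alpha\gamma}\frac{\partial(a\nabla\xi_\alpha)_j}{\partial x_r}\partial_j\xi_\gamma$. I would then match this residual to the remaining target pieces by computing $(Pa)_{ir}\frac{\partial P_{lr}}{\partial x_l}$ and $(Pa)_{ir}\frac{\partial\ln\det\Psi}{\partial x_r}$ directly: using $P_{lr}=\delta_{lr}-(\Psi^{-1})_{\alpha\gamma}(a\nabla\xi_\alpha)_l\partial_r\xi_\gamma$ together with $(Pa)_{ir}\partial_r\xi_\gamma=0$, every term in which $\partial_r\xi_\gamma$ survives undifferentiated is annihilated, so $(Pa)_{ir}\frac{\partial P_{lr}}{\partial x_l}$ collapses to the single Hessian contraction $-(Pa)_{ir}(\Psi^{-1})_{\alpha\gamma}(a\nabla\xi_\alpha)_l\frac{\partial^2\xi_\gamma}{\partial x_l\partial x_r}$; and the same annihilation, applied to $\Psi_{\alpha\gamma}=\nabla\xi_\alpha^T a\nabla\xi_\gamma$, shows that $(Pa)_{ir}\frac{\partial\ln\det\Psi}{\partial x_r}$ equals the $\frac{\partial a}{\partial x_r}$-contraction hidden in the residual plus twice that same Hessian term. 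Splitting $\frac{\partial(a\nabla\xi_\alpha)_j}{\partial x_r}$ into $\frac{\partial a_{jm}}{\partial x_r}\partial_m\xi_\alpha$ and $a_{jm}\frac{\partial^2\xi_\alpha}{\partial x_m\partial x_r}$ and comparing gives residual $=(Pa)_{ir}\frac{\partial P_{lr}}{\partial x_l}+(Pa)_{ir}\frac{\partial\ln\det\Psi}{\partial x_r}$, and combining with the $-\frac12$ already isolated completes the proof.

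The main obstacle I anticipate is purely bookkeeping: keeping the many multi-index contractions straight, and in particular exploiting the symmetry of $(\Psi^{-1})_{\alpha\gamma}$ in $(\alpha,\gamma)$ so that the two ``mixed'' first-derivative-of-$(a\nabla\xi)$ terms coincide and the factor $2$ in front of the Hessian contributions comes out correctly. There is one genuinely non-obvious cancellation to check with care, namely that the $\frac{\partial a}{\partial x_r}$ term carried by the residual is exactly the one generated by expanding $\frac{\partial\ln\det\Psi}{\partial x_r}$; getting the signs right, and being careful about which index the divergence $\frac{\partial}{\partial x_l}$ contracts against, is where the computation must be done attentively rather than mechanically.
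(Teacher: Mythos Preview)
Your proposal is correct and follows essentially the same route as the paper: split $(Pa)_{jl}$ via (\ref{p-i-j}), apply Jacobi's formula to the $a$-part, then handle the $\Psi^{-1}$-part by differentiating the contraction $(a\nabla\xi_\alpha)_j(a^{-1})_{jl}=\partial_l\xi_\alpha$ (equivalently, your identity $(a\nabla\xi_\alpha)_j(a\nabla\xi_\gamma)_l(a^{-1})_{jl}=\Psi_{\alpha\gamma}$), invoke Jacobi for $\Psi$, and finally recognize the remaining Hessian contraction $-(Pa)_{ir}(\Psi^{-1})_{\alpha\gamma}(a\nabla\xi_\alpha)_l\,\partial^2_{lr}\xi_\gamma$ as $(Pa)_{ir}\,\partial_l P_{lr}$ using the annihilation $(Pa)_{ir}\partial_r\xi_\gamma=0$. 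The only cosmetic difference is that the paper proceeds ``forward'' until the right-hand side appears, whereas you isolate a residual and match it to a combination of target terms; one small wording slip is that your phrase ``the same annihilation, applied to $\Psi_{\alpha\gamma}$'' is not quite accurate---no annihilation is needed when expanding $\partial_r\ln\det\Psi$, only the chain rule and symmetry of $\Psi^{-1}$---but your subsequent computation is correct regardless.
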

\begin{proof}
  Using the expression of $Pa$ in (\ref{p-i-j}), the relations 
  \begin{align*}
Pa\nabla\xi_\gamma = 0\,,\quad
  \frac{\partial \ln \det a}{\partial x_r} = (a^{-1})_{jl}
\frac{\partial a_{jl}}{\partial x_r}\,, \quad 
  \frac{\partial \ln \det \Psi}{\partial x_r} = (\Psi^{-1})_{\alpha\gamma}
\frac{\partial \Psi_{\alpha\gamma}}{\partial x_r}\,, 
\end{align*}
and the integration by parts formula, we can compute
  \begin{align*}
    & \frac{1}{2}(Pa)_{ir}  (Pa)_{jl} \frac{\partial (a^{-1})_{jl}}{\partial x_r} \\
  = &\frac{1}{2}(Pa)_{ir} \Big(a_{jl} - 
  (\Psi^{-1})_{\alpha\gamma} (a\nabla\xi_\alpha)_j (a\nabla\xi_\gamma)_l\Big)
  \frac{\partial (a^{-1})_{jl}}{\partial x_r} \\
  =&
-\frac{1}{2}(Pa)_{ir}\frac{\partial \ln\mbox{det}\,a}{\partial x_r}
- \frac{1}{2}(Pa)_{ir}  
(\Psi^{-1})_{\alpha\gamma} (a\nabla\xi_\alpha)_l\,\partial^2_{lr}\xi_\gamma 
+ \frac{1}{2}(Pa)_{ir}  
(\Psi^{-1})_{\alpha\gamma} \partial_l\xi_\alpha\,
\frac{\partial (a\nabla\xi_{\gamma})_{l}}{\partial x_r} \\
  =&
-\frac{1}{2}(Pa)_{ir}\frac{\partial \ln\mbox{det}\,a}{\partial x_r}
- (Pa)_{ir}  
(\Psi^{-1})_{\alpha\gamma} (a\nabla\xi_\alpha)_l\,\partial^2_{lr}\xi_\gamma 
+ \frac{1}{2}(Pa)_{ir}  \frac{\partial \ln\mbox{det}\Psi}{\partial x_r} \\
  =&
-\frac{1}{2}(Pa)_{ir}\frac{\partial \ln\mbox{det}\,a}{\partial x_r}
+ (Pa)_{ir}  
\frac{\partial P_{lr}}{\partial x_l}
+ \frac{1}{2}(Pa)_{ir}  \frac{\partial \ln\mbox{det}\Psi}{\partial x_r} \,.
    \end{align*}
  \end{proof}
  We conclude this section with the proof of Corollary~\ref{corollary-on-non-reversible} in Section~\ref{sec-n-rn}.
\begin{proof}[Proof of Corollary~\ref{corollary-on-non-reversible}]
  Notice that the infinitesimal generator of (\ref{dynamics-1-submanifold-j})
  can be written as 
  \begin{align*}
    \mathcal{L}^{\bm{J}} = J_i\frac{\partial}{\partial x_i} + \mathcal{L}\,,
  \end{align*}
  where $\mathcal{L}$ is the infinitesimal generator of (\ref{dynamics-1-submanifold}).
  Using the fact $\bm{J} \in T_x \Sigma$, the same argument of Proposition~\ref{sde-inv-mu} implies that (\ref{dynamics-1-submanifold-j}) evolves on $\Sigma$ as well. 
  Since $\mu_1$ is invariant with respect to $\mathcal{L}$, to show
  the SDE (\ref{dynamics-1-submanifold-j}) has the same invariant measure, it is enough to verify that 
  \begin{align}
    \mbox{div}^{\Sigma}\Big\{\bm{J} \exp\Big[-\beta \Big(U+\frac{1}{2\beta} \ln
    \frac{\det (\nabla\xi^T a \nabla\xi)}{\det a}\Big)\Big]\Big\} = 0\,, \quad
    \forall~x \in \Sigma\,,
    \label{div-j-to-check}
  \end{align}
  where we have used the expression of $\mu_1$ in (\ref{mu1-mu2-g}). Applying
  the formula of $\mbox{div}^{\Sigma}$ in Lemma~\ref{lemma-n-1}, 
  we can compute the right hand side of (\ref{div-j-to-check}), as 
  \begin{align*}
    \begin{split}
    &\mbox{div}^{\Sigma}\Big\{\bm{J} \exp\Big[-\beta \Big(U+\frac{1}{2\beta} \ln
    \frac{\det (\nabla\xi^T a \nabla\xi)}{\det a}\Big)\Big]\Big\} \\
    =&\langle \nabla^{\mathcal{M}}_{\bm{p}_j}
\Big\{\exp\Big[-\beta \Big(U+\frac{1}{2\beta} \ln
    \frac{\det (\nabla\xi^T a \nabla\xi)}{\det a}\Big)\Big]J_i\bm{e}_i\Big\},
    \bm{p}_j\rangle_g \\
      =& P_{j,l}P_{j,r}\langle \nabla^{\mathcal{M}}_{\bm{e}_l}
\Big\{\exp\Big[-\beta \Big(U+\frac{1}{2\beta} \ln \frac{\det (\nabla\xi^T a \nabla\xi)}{\det a}\Big)\Big]J_i\bm{e}_i\Big\},
      \bm{e}_r\rangle_g\,. 
    \end{split}
  \end{align*}
  which implies that (\ref{div-j-to-check}) is equivalent to 
  \begin{align*}
    0 =& (Pa)_{lr} \frac{\partial J_i}{\partial x_l} (a^{-1})_{ir} +
    (Pa)_{lr}J_i\, \Gamma_{li}^{r'} (a^{-1})_{r'r} - \beta (Pa)_{lr} J_i (a^{-1})_{ir} \frac{\partial U}{\partial x_l} \\
    & - \frac{1}{2} (Pa)_{lr} J_i (a^{-1})_{ir} \frac{\partial}{\partial
    x_l}\Big[ \ln \frac{\det (\nabla\xi^T a \nabla\xi)}{\det a} \Big]\\
=& P_{li} \frac{\partial J_i}{\partial x_l} +
    P_{lr'}J_i\, \Gamma_{li}^{r'} - \beta P_{li} J_i \frac{\partial U}{\partial x_l} 
     - \frac{1}{2} P_{li} J_i \frac{\partial}{\partial x_l}\Big[ \ln
     \frac{\det (\nabla\xi^T a \nabla\xi)}{\det a} \Big]\,,
  \end{align*}
  where $\Gamma_{li}^{r'}$ are the Christoffel's symbols satisfying $\nabla^{\mathcal{M}}_{\bm{e}_l} \bm{e}_i =
  \Gamma^{r'}_{li} \bm{e}_{r'}$. Using the expression (\ref{gamma-det-exp}) of $\Gamma_{li}^{r'}$, the fact $J_i = P_{ij} J_j$, and
  Lemma~\ref{lemma-identity-in-i12}, we can
  further simplify the above equation 
  and obtain
  \begin{align*}
    &P_{lr'}J_i\, \Gamma_{li}^{r'} - \frac{1}{2} P_{li} J_i
    \frac{\partial}{\partial x_l}\Big[ \ln \frac{\det (\nabla\xi^T a \nabla\xi)}{\det a} \Big]\\
    =& \frac{1}{2} J_i(Pa)_{lr} \frac{\partial (a^{-1})_{lr}}{\partial x_i} -
    \frac{1}{2} P_{li} J_i \frac{\partial}{\partial x_l}\Big[ \ln \frac{\det
    (\nabla\xi^T a \nabla\xi)}{\det a} \Big]\\
    =& J_j \frac{\partial P_{ij}}{\partial x_i}\,.
  \end{align*}
  Therefore, we see that 
  (\ref{div-j-to-check}) is equivalent to the condition (\ref{j-assumption}).
\end{proof}

\section{Proofs in Section~\ref{sec-numerical-scheme-mu1}}
\label{app-sec-numerical}
In this section, we collect proofs of the various results in Section~\ref{sec-numerical-scheme-mu1}. 

First, we prove Proposition~\ref{prop-map-phi-1st-2nd-derivative}, which concerns the properties of the flow map $\Theta$ defined in 
(\ref{phi-map}), (\ref{fun-cap-f}), and (\ref{theta-map}).
While the approach of the proof is similar to the one in~\cite{fatkullin2010},
here we consider the specific function $F$ in (\ref{fun-cap-f}) 
and we will provide full details of the derivations.

\begin{proof}[Proof of Proposition~\ref{prop-map-phi-1st-2nd-derivative}]
      In this proof, we will always assume $x \in \Sigma$. 
      For a function which only depends on the state and is evaluated at $x$,
      we will often omit its argument in order to keep the notations simple.
      Also notice that, repeated indices other than $l$ and $l'$
      indicate that they are summed up, while for the indices $l$, $l'$
      we assume that they are fixed by default unless the summation operator is used explicitly.

      Since $\nabla
      F = 0$ on $\Sigma$, from the equation (\ref{phi-map}) we know that
      $\varphi(x,s) \equiv x$, $\forall s \ge 0$. 
      Let us Denote by $\nabla^2 F$ the Hessian matrix (on the standard Euclidean space)
      of the function $F$ in (\ref{fun-cap-f}), i.e., 
     $\nabla^2 F =(\partial^2_{ij} F)_{1 \le i, j \le d}$.
      Since $\xi(x) = \bm{0} \in \mathbb{R}^k$, direct calculation gives 
    \begin{align}
      (a\nabla^2 F)_{ij} = a_{ir}\frac{\partial^2 F}{\partial
      x_{r}\partial x_{j}} = 
      (a\nabla\xi\nabla \xi^T)_{ij} \,, \qquad 1 \le i, j \le d\,.
      \label{a-d2f}
    \end{align}
    Meanwhile, it is straightforward to verify that $a\nabla^2 F$ satisfies 
    \begin{align*}
      \begin{split}
	& \langle a \nabla^2 F \bm{u}, \bm{v}\rangle_g = \langle \bm{u}, a
      \nabla^2 F \bm{v}\rangle_g  \,,\quad \forall~\bm{u}, \bm{v} \in \mathbb{R}^d\,, \\
      & \langle a \nabla^2F \bm{u}, \bm{u}\rangle_g = |\nabla\xi^T\bm{u}|^2 \ge
      0\,, \quad \forall~\bm{u}  \in \mathbb{R}^d\,,\\
      &(a\nabla^2 F) \bm{u}= a\nabla\xi\nabla\xi^T \bm{u} = 0\,,
      \quad\,\forall~\bm{u} \in T_x\Sigma\,.
\end{split}
    \end{align*}
    Therefore, we can assume that $a\nabla^2F$ has real (non-negative) eigenvalues 
	  \begin{align}
	  \lambda_1=\lambda_2= \cdots=\lambda_{d-k} = 0 <
    \lambda_{d-k+1}\le \cdots \le\lambda_d\,, 
	    \label{eigenvalue-sequence}
	  \end{align}
     and the corresponding eigenvectors, denoted by  
    $\bm{v}_i = (v_{i1}, v_{i2}, \cdots, v_{id})^T$, $1 \le i \le d$, 
    are orthonormal with respect to the inner product
    $\langle\cdot,\cdot\rangle_g$ in (\ref{ip-a}), such that
$\bm{v}_1, \bm{v}_2, \cdots, \bm{v}_{d-k} \in T_x\Sigma$.

The projection matrix $P$ in (\ref{p-ij}) can be expressed using the vectors 
$\bm{v}_i$ as
	  \begin{align}
	    P_{ij} = \sum_{l=1}^{d-k} v_{li} (a^{-1})_{jr} v_{lr}\,, \quad 1 \le i, j \le d\,,
	    \label{p-by-v}
	  \end{align}
	  and we have 
	  \begin{align}
	    \sum_{l=1}^{d-k} v_{li}v_{lj} = (Pa)_{ij}\,,\quad  a_{ij} - (Pa)_{ij}
	    =\sum_{l=d-k+1}^{d} v_{li}v_{lj}\,.
	    \label{v-pa-and-remain}
	  \end{align}
    It is also a simple fact that the eigenvalues of the $k\times k$ matrix
    $\Psi=\nabla\xi^T a\nabla\xi$ are $\lambda_{d-k+1}$, $\lambda_{d-k+2}$,
    $\cdots$, $\lambda_{d}$, 
    with the corresponding eigenvectors given by $\nabla\xi^T \bm{v}_{d-k+1}$,
    $\nabla\xi^T\bm{v}_{d-k+2}$, $\cdots$, $\nabla\xi^T\bm{v}_d$. In
    particular, this implies
      \begin{align}
\prod\limits_{i=d-k+1}^d \lambda_i=\det (\nabla\xi^T a\nabla\xi) = \det \Psi\,.
	\label{det-equals-eigenvalue-product}
      \end{align}

      In the following, we study the ODE (\ref{phi-map}) using the eigenvectors $\bm{v}_i$.
    Differentiating the ODE (\ref{phi-map}) twice, using the facts that 
    $\varphi(x,s)\equiv x$, $\forall s \ge 0$, and $\nabla F = 0$ on $\Sigma$, we obtain     
	  \begin{align}
      \begin{split}
	\frac{d}{ds} \frac{\partial \varphi_i}{\partial x_j}(x,s) =&
	-\Big(a_{ir'}\frac{\partial^2 F}{\partial x_{r'}\partial x_{i'}}\Big) \,
      \frac{\partial\, \varphi_{i'}}{\partial x_j} (x,s)\\
      \frac{d}{ds} \frac{\partial^2 \varphi_i}{\partial x_j\partial
      x_r}(x,s) =&
	-\bigg(2\frac{\partial a_{ir'}}{\partial x_{i'}} \frac{\partial^2
	  F}{\partial x_{r'} \partial x_{j'}} + a_{ir'}\frac{\partial^3
	  F}{\partial x_{r'}\partial x_{i'}\partial
	    x_{j'}}\bigg) 
      \frac{\partial\, \varphi_{i'}}{\partial x_j} (x,s)
      \frac{\partial\, \varphi_{j'}}{\partial x_r} (x,s)\\
      &-\Big(a_{ir'}\frac{\partial^2 F}{\partial x_{r'}\partial x_{i'}}\Big)
      \frac{\partial^2\, \varphi_{i'}}{\partial x_j \partial x_r}(x,s)\,,
    \end{split}
    \label{derivative-of-flow-map}
    \end{align}
for $s \ge 0$ and $1 \le i, j, r \le d$.
      \begin{enumerate}
	\item
    The first equation of (\ref{derivative-of-flow-map}) implies 
    \begin{align}
      \frac{d}{ds}\Big(v_{lj}\frac{\partial \varphi_i}{\partial
      x_j}(x,s)\Big) =&
	-\Big(a_{ir'}\frac{\partial^2 F}{\partial x_{r'}\partial x_{i'}}\Big)
      \Big( v_{lj} \frac{\partial\, \varphi_{i'}}{\partial
      x_j}(x,s)\Big)\,,\quad 1 \le l \le d\,.
      \label{1st-derivative-ode-1}
    \end{align}
    Since $\varphi(\cdot, 0)$ is the identity map, we have 
    \begin{align}
     v_{lj} \frac{\partial\varphi_{i}}{\partial
    x_j}(x,0) = v_{li}, \quad \mbox{at}~ s=0\,. 
      \label{1st-derivative-ode-2}
  \end{align}
  Because $\bm{v}_l$ is the eigenvector of $a\nabla^2 F$, we can 
  directly solve the solution of (\ref{1st-derivative-ode-1})-(\ref{1st-derivative-ode-2}) and obtain
    \begin{align}
       v_{lj} \frac{\partial\, \varphi_{i}}{\partial x_j}(x,s) =
      e^{-\lambda_l s} v_{li}\, \Longleftrightarrow 
\frac{\partial\, \varphi_{i}}{\partial x_j}(x,s) =
\sum_{l=1}^d e^{-\lambda_l s} v_{li} (a^{-1})_{jr}v_{lr}\,, \qquad \forall~s \ge 0\,,
\label{solution-of-1st-derivative-ode}
    \end{align}
    for $1 \le i, j \le d$.
	  Sending $s\rightarrow +\infty$, using (\ref{eigenvalue-sequence}) and (\ref{p-by-v}), we obtain 
    \begin{align}
      \frac{\partial \Theta_{i}}{\partial x_j} = \lim_{s\rightarrow +\infty} 
      \frac{\partial\, \varphi_{i}}{\partial x_j}(x,s) = \sum_{l=1}^{d-k} v_{li} 
      (a^{-1})_{jr}v_{lr} =P_{ij}\,.
      \label{theta-1st-equals-p-in-proof}
    \end{align}
	\item
	  We proceed to compute $a_{jr}\frac{\partial^2\Theta_i}{\partial
	  x_j\partial x_r}$, $1 \le i\le d$. For this purpose,
    let us define
    \begin{align*}
      \begin{split}
	& A_{l}(x,s) =  (a^{-1})_{ij'} v_{lj'} a_{jr}
	\frac{\partial^2 \varphi_{i}}{\partial x_j\partial x_r}(x,s) \,,\quad 1 \le l
      \le d\,,\\
      \Longleftrightarrow\quad &
       a_{jr}\frac{\partial^2 \varphi_{i}}{\partial x_j\partial
       x_r}(x,s) = \sum_{l=1}^d v_{li} A_{l}(x,s) \,. 
	\end{split}
    \end{align*}
    Using the second equation of (\ref{derivative-of-flow-map}), 
    the solution (\ref{solution-of-1st-derivative-ode}), and the orthogonality
    of the eigenvectors, we can obtain
    \begin{align*}
      \frac{d A_{l}}{ds}(x,s)
      =
      - \sum_{l'=1}^d\bigg[
	2\frac{\partial a_{ir}}{\partial x_{i'}}\frac{\partial^2 F}{\partial
	x_{r}\partial x_{j}}(a^{-1})_{ir'} +       
      \frac{\partial^3 F}{\partial x_{r'}\partial x_{i'}\partial x_{j}}\bigg]
      v_{l'i'}v_{l'j} v_{lr'}\, e^{-2\lambda_{l'}s} 
      - \lambda_l A_l(x,s)\,, 
    \end{align*}
for $1 \le l \le d$, from which we get 
    \begin{align*}
      &P_{ii'} 
      a_{jr}\frac{\partial^2 \varphi_{i'}}{\partial x_j\partial x_r}(x,s) \\
      =& \sum_{l=1}^{d-k} v_{li}A_l(x,s) \\
      = &
      -\sum_{l=1}^{d-k} \sum_{l'=1}^d\bigg[
	2\frac{\partial a_{jr}}{\partial x_{i'}}\frac{\partial^2 F}{\partial
	x_r\partial x_{j'}}(a^{-1})_{jr'} +       
      \frac{\partial^3 F}{\partial
    x_{r'}\partial x_{i'}\partial x_{j'}}\bigg]
    v_{l'i'}v_{l'j'}v_{lr'}v_{li}\,
    e^{-\lambda_l s} \int_0^s e^{(\lambda_l -2\lambda_{l'})u}\,du \\
      = &
\sum_{l=1}^{d-k}\sum_{l'=1}^d
\bigg[2\lambda_{l'}\frac{\partial (a^{-1})_{i'r}}{\partial x_{j}}
      - 
      \frac{\partial^3 F}{\partial
    x_{r}\partial x_{i'}\partial x_{j}}\bigg]
      v_{l'i'}v_{l'j}v_{lr}v_{li}\,
      \int_0^s e^{-2\lambda_{l'}\,u}\,du\,. 
    \end{align*}
To further simplify the last expression above, 
 we differentiate the identity
    \begin{align*}
       \frac{\partial^2 F}{\partial x_{i'}\partial x_{j}}
      v_{l'i'}v_{l'j} = \lambda_{l'}\,,
    \end{align*}
where $l'$ is fixed, $1 \le l' \le d$, along the eigenvector $\bm{v}_l$, which
gives
    \begin{align*}
      & \frac{\partial^3 F}{\partial x_{r}\partial x_{i'}\partial x_{j}}
      v_{l'i'}v_{l'j}v_{lr}\\
      =& -2 \frac{\partial^2 F}{\partial x_{i'}\partial x_{j}}
      \frac{\partial v_{l'i'}}{\partial x_{r}} v_{l'j}
      v_{lr} +
      \frac{\partial \lambda_{l'}}{\partial x_{r}} v_{lr}\\
      =&-2 \lambda_{l'}
       (a^{-1})_{i'r'} v_{l'r'}
      \frac{\partial v_{l'i'}}{\partial x_{r}} 
      v_{lr} +
      \frac{\partial \lambda_{l'}}{\partial x_{r}} v_{lr}\,.
    \end{align*}
    Therefore, taking the limit $s\rightarrow +\infty$, using 
    the relations (\ref{v-pa-and-remain}),
    (\ref{det-equals-eigenvalue-product}), and
    Lemma~\ref{lemma-identity-in-i12} in Appendix~\ref{app-sec-manifold}, we can compute
    \begin{align}
      &P_{ii'} a_{jr}\frac{\partial^2 \Theta_{i'}}{\partial x_j\partial x_r}\notag \\
      =&\lim_{s\rightarrow +\infty} P_{ii'} 
      a_{jr}\frac{\partial^2 \varphi_{i'}}{\partial x_j\partial x_r}(x,s)\notag \\
      = &\lim_{s\rightarrow +\infty} 
      \sum_{l'=d-k+1}^d\sum_{l=1}^{d-k}
      \bigg[2\lambda_{l'}\frac{\partial (a^{-1})_{i'r}}{\partial x_{j}}
v_{l'i'}v_{l'j}
+2 \lambda_{l'}
       (a^{-1})_{jr'} v_{l'r'}
      \frac{\partial v_{l'j}}{\partial x_{r}} -
      \frac{\partial \lambda_{l'}}{\partial x_{r}} 
\bigg]
      v_{lr}v_{li}\,
      \int_0^s e^{-2\lambda_{l'}\,u}\,du\notag \\
      =&\frac{\partial (a^{-1})_{i'r}}{\partial x_{j}} \big(a_{i'j} -
      (Pa)_{i'j}\big) (Pa)_{ir} - 
      \frac{1}{2} \frac{\partial (a^{-1})_{jr'}}{\partial x_{r}} \big(a_{jr'} -
      (Pa)_{jr'}\big) (Pa)_{ir}
      - \frac{1}{2} (Pa)_{ir} \frac{\partial \ln\det \Psi}{\partial x_r}\notag\\
      =&
      -P_{ir} \frac{\partial a_{jr}}{\partial x_{j}} - \frac{\partial
	P_{jr}}{\partial x_{j}} (Pa)_{ir} + P_{ir} 
	\frac{\partial (Pa)_{jr}}{\partial x_{j}} 
+ \frac{\partial P_{jr}}{\partial x_j} (Pa)_{ir}\notag \\
=& 
      -P_{ir} \frac{\partial a_{jr}}{\partial x_{j}} + P_{ir} 
	\frac{\partial (Pa)_{jr}}{\partial x_{j}} \,.
	\label{p-a-d2-theta}
    \end{align}

    On the other hand, differentiating the relation $\xi(\Theta(x)) \equiv
    \bm{0}$ twice and using (\ref{theta-1st-equals-p-in-proof}), we get
    \begin{align*}
      \frac{\partial\xi_{\gamma}}{\partial x_{i'}} \frac{\partial^2
      \Theta_{i'}}{\partial x_j\partial x_r}
      =-
      \frac{\partial^2
      \xi_{\gamma}}{\partial x_{i'} \partial x_{j'}} 
      \frac{\partial \Theta_{i'}}{\partial x_j} 
      \frac{\partial \Theta_{j'}}{\partial x_r}  = 
      - \frac{\partial^2
      \xi_{\gamma}}{\partial x_{i'} \partial x_{j'}} P_{i'j}P_{j'r}\,,
    \end{align*}
       for $1 \le \gamma \le k$.
    Therefore, using $PaP^T=P^2a=Pa$ and $Pa\nabla\xi_\gamma=0$, we
    can compute
    \begin{align}
      \begin{split}
	& (\delta_{ii'}-P_{ii'})  
      a_{jr}\frac{\partial^2 \Theta_{i'}}{\partial x_j\partial x_r} \\
      = &   (\Psi^{-1})_{\alpha\gamma}  (a\nabla \xi_\alpha)_i
      \frac{\partial\xi_{\gamma}}{\partial x_{i'}} 
      a_{jr}\frac{\partial^2 \Theta_{i'}}{\partial x_j\partial x_r} \\
      = &
- (\Psi^{-1})_{\alpha\gamma}
(a\nabla \xi_\alpha)_i
\frac{\partial^2 \xi_{\gamma}}{\partial x_{i'} \partial x_{j'}} 
P_{i'j}P_{j'r} a_{jr} \\
 =&
- (\Psi^{-1})_{\alpha\gamma}
(a\nabla \xi_\alpha)_i
(\partial^2_{i'j'} \xi_{\gamma})(Pa)_{i'j'} \\
      = &
(Pa)_{i'j'}\frac{\partial P_{ii'}}{\partial x_{j'}} \,.
    \end{split}
 \label{1-p-delta-phi}
    \end{align}
    Summing up (\ref{p-a-d2-theta}) and (\ref{1-p-delta-phi}), we conclude
    that
    \begin{align*}
      a_{jr}\frac{\partial^2 \Theta_{i}}{\partial x_j\partial x_r} 
      = 
      \frac{\partial (Pa)_{ij}}{\partial x_{j}} - 
      P_{ir} \frac{\partial a_{rj}}{\partial x_{j}}\,. 
    \end{align*}
      \end{enumerate}
    \end{proof}
    Now, we prove Theorem~\ref{thm-estimate-scheme-on-submanifold}.
\begin{proof}[Proof of Theorem~\ref{thm-estimate-scheme-on-submanifold}]
  Since we follow the approach in~\cite{conv-time-averaging}, we will only sketch the proof and will mainly focus on the differences.

  First of all, we introduce some notations. Let $x^{(l)}$, $l = 0, 1, \cdots$, be the states generated from the
  numerical scheme (\ref{micro-scheme-initial-repeat}) and let 
  $\psi$ be a function on $\Sigma$. We will adopt the abbreviations $\psi^{(l)} = \psi(x^{(l)})$, $P^{(l)} = P(x^{(l)})$, etc. 
For $j\ge 1$, $D^j\psi[\bm{u}_1, \bm{u}_2, \cdots, \bm{u}_j]$ denotes the 
  $j$th order directional derivatives of $\psi$ along the vectors $\bm{u}_1$, $\bm{u}_2$,
  $\cdots$, $\bm{u}_j$, and $|D^j\psi|_{\infty}$ is the supremum norm of $D^j\psi$ on $\Sigma$.
  Similarly, $D^j\Theta[\bm{u}_1,
  \bm{u}_2, \cdots, \bm{u}_j]$ denotes the $d$-dimensional vector whose $i$th component is 
$D^j\Theta_i[\bm{u}_1, \bm{u}_2, \cdots, \bm{u}_j]$, for $1 \le i \le d$.

Define the vector $\bm{b}^{(l)}=(b^{(l)}_1, b^{(l)}_2, \cdots, b^{(l)}_d)^T$ by 
  \begin{align}
    b_i^{(l)} = \Big(-a_{ij}\frac{\partial U}{\partial x_j}   +
  \frac{1}{\beta}\frac{\partial a_{ij}}{\partial x_j}\Big)(x^{(l)})\,, \quad 1
  \le i \le d\,,
  \label{b-vector}
\end{align} 
  for $l = 0,\,1,\,\cdots$, and set 
  \begin{align}
    \bm{\delta}^{(l)} = \bm{b}^{(l)}h + \sqrt{2\beta^{-1} h}\,\sigma^{(l)}
    \bm{\eta}^{(l)}\,.
    \label{delta-vector-by-b}
\end{align}
 We have 
\begin{align}
  \bm{\delta}^{(l)} = x^{(l+\frac{1}{2})}- x^{(l)}\,,\qquad\mbox{and}\quad
  x^{(l+1)} = \Theta\big(x^{(l+\frac{1}{2})}\big) =
\Theta\big(x^{(l)}+\bm{\delta}^{(l)}\big)\,. 
\label{delta-and-x}
\end{align}
Let $\mathcal{L}$ be the infinitesimal generator of the SDE
  (\ref{dynamics-1-submanifold}) in Theorem~\ref{thm-mu1-mu2}, given by  
\begin{align}
  \begin{split}
  \mathcal{L} =& - (Pa)_{ij} \frac{\partial U}{\partial
  x_j}\frac{\partial}{\partial x_i} + \frac{1}{\beta} \frac{\partial
  (Pa)_{ij}}{\partial x_j}\frac{\partial}{\partial x_i} + \frac{1}{\beta}
  (Pa)_{ij} \frac{\partial^2}{\partial x_i\partial x_j}\\
  =& \frac{e^{\beta U}}{\beta}  \frac{\partial}{\partial x_i}\Big(e^{-\beta
  U}(Pa)_{ij}\frac{\partial}{\partial x_j}\Big)\,, 
\end{split}
\label{l-of-sde-1}
\end{align}
in Remark~\ref{rmk-3}.
We consider the Poisson equation on $\Sigma$
\begin{align}
  \mathcal{L} \psi = f - \overline{f}\,.
  \label{poisson-eqn}
\end{align}
The existence and the regularity of the solution $\psi$ can be established under
  Assumption~\ref{assump-1}--\ref{assump-2}, and the Bakry-Emery condition in
  Section~\ref{sec-n-rn}.
Applying Taylor's theorem and using the fact that $\Theta(x^{(l)}) = x^{(l)}$ since $x^{(l)} \in \Sigma$, we have
\begin{align}
  \begin{split}
  &\psi^{(l+1)} \\
    =& (\psi\circ\Theta)\big(x^{(l)} + \bm{\delta}^{(l)}\big)\\
    =& \psi^{(l)} + D(\psi\circ\Theta)^{(l)}[\bm{\delta}^{(l)}]
    + \frac{1}{2}D^2(\psi\circ\Theta)^{(l)}[\bm{\delta}^{(l)}, \bm{\delta}^{(l)}]
    +
    \frac{1}{6}D^3(\psi\circ\Theta)^{(l)}[\bm{\delta}^{(l)},\bm{\delta}^{(l)},\bm{\delta}^{(l)}]
    + R^{(l)}\,,
  \end{split}
  \label{psi-taylor}
\end{align}
where the reminder is given by 
\begin{align*}
  R^{(l)} = 
  \frac{1}{6} \Big(\int_0^1 s^3\,D^4(\psi\circ\Theta)\big(x^{(l)}+(1-s)\bm{\delta}^{(l)} \big)
  ds\Big)\big[\bm{\delta}^{(l)},\bm{\delta}^{(l)},\bm{\delta}^{(l)},\bm{\delta}^{(l)}\big]\,.
\end{align*}

Now we apply Proposition~\ref{prop-map-phi-1st-2nd-derivative} to 
simplify the expression in (\ref{psi-taylor}). Using the chain rule, the expressions (\ref{delta-vector-by-b})--(\ref{l-of-sde-1}), we can derive 
\begin{align}
  &    \psi^{(l+1)}\notag \\
  =&\, \psi^{(l)} + D\psi^{(l)}\big[P^{(l)}\bm{\delta}^{(l)}+\frac{1}{2}
D^2\Theta^{(l)}[\bm{\delta}^{(l)},\bm{\delta}^{(l)}]\big]
  + \frac{1}{2}D^2\psi^{(l)}[P^{(l)}\bm{\delta}^{(l)}, P^{(l)}\bm{\delta}^{(l)}]\notag\\
  &+ \frac{1}{6}D^3(\psi\circ\Theta)^{(l)}[\bm{\delta}^{(l)},\bm{\delta}^{(l)},\bm{\delta}^{(l)}] + R^{(l)}\notag \\
  =&\, \psi^{(l)} + (\mathcal{L}\psi)^{(l)} h+ \sqrt{2\beta^{-1}h}\, D\psi^{(l)}[(P\sigma)^{(l)}\bm{\eta}^{(l)}]
  + 
  \frac{h^2}{2} D\psi^{(l)}\big[D^2\Theta^{(l)}[
  \bm{b}^{(l)}, \bm{b}^{(l)}]\big]\notag \\
  & + \sqrt{2\beta^{-1}} h^{\frac{3}{2}} D\psi^{(l)}\big[D^2\Theta^{(l)}[
\bm{b}^{(l)}, \sigma^{(l)}\bm{\eta}^{(l)}]\big] +
\frac{h^2}{2}D^2\psi^{(l)}[P^{(l)}\bm{b}^{(l)},P^{(l)}\bm{b}^{(l)}]\notag \\
&+ \sqrt{2\beta^{-1}} h^{\frac{3}{2}} D^2\psi^{(l)}[P^{(l)}\bm{b}^{(l)}, (P\sigma)^{(l)} \bm{\eta}^{(l)}] 
  + \frac{hD\psi^{(l)}}{\beta}
    \big[D^2\Theta^{(l)}[\sigma^{(l)}\bm{\eta}^{(l)},\sigma^{(l)}\bm{\eta}^{(l)}]-a^{(l)}:\nabla^2 \Theta^{(l)}\big]\notag\\
    &\, + \frac{h}{\beta} \Big(D^2\psi^{(l)}[(P\sigma)^{(l)}\bm{\eta}^{(l)},
  (P\sigma)^{(l)}\bm{\eta}^{(l)}] -
  (Pa)^{(l)}:D^2\psi^{(l)}\Big)
 + \frac{1}{6}D^3(\psi\circ\Theta)^{(l)}[\bm{\delta}^{(l)},\bm{\delta}^{(l)},\bm{\delta}^{(l)}]
+ R^{(l)}\,,
\label{psi-k-taylor}
\end{align}
where in the last equation we added and subtracted some terms, and 
we used the identity 
\begin{align}
  D\psi^{(l)}\big[P^{(l)}\bm{b}^{(l)}+\frac{1}{\beta}
  a^{(l)}:\nabla^2 \Theta^{(l)}\big] + \frac{1}{\beta} (Pa)^{(l)}:\nabla^2\psi^{(l)} =
  (\mathcal{L}\psi)^{(l)}\,,
  \label{identity-l-psi}
\end{align}
which can be verified using Proposition \ref{prop-map-phi-1st-2nd-derivative},
(\ref{b-vector}) and (\ref{l-of-sde-1}).
In (\ref{identity-l-psi}), $a:\nabla^2\Theta$ is the vector whose $i$th component is given by
$a_{jr}\frac{\partial^2 \Theta_{i}}{\partial x_j\partial x_r}$, and
$(Pa):\nabla^2 \psi$ is defined in a similar way. 

Summing up (\ref{psi-k-taylor}) for $l=0, 1, \cdots, n-1$, dividing both
sides by $T$,
and using the Poisson equation (\ref{poisson-eqn}), gives
\begin{align}
  \widehat{f}_n - \overline{f} = 
\frac{1}{n}\sum_{l=0}^{n-1} f(x^{(l)}) -
  \overline{f} 
  =
  \frac{\psi^{(n)}-\psi^{(0)}}{T} + \frac{1}{T} \sum_{i=1}^5 M_{i,n} +
  \frac{1}{T} \sum_{i=1}^4 S_{i,n} \,,
  \label{diff-hatfn-f-by-psi}
\end{align}
where 
\begin{align}
  M_{1,n} =& -\sqrt{2\beta^{-1}h}
  \sum_{l=0}^{n-1}D\psi^{(l)}[(P\sigma)^{(l)}\bm{\eta}^{(l)}] \,,\notag \\
    M_{2,n} =& -\sqrt{2\beta^{-1}}
  h^{\frac{3}{2}} \sum_{l=0}^{n-1} D\psi^{(l)}\big[D^2\Theta^{(l)}[\bm{b}^{(l)},
  \sigma^{(l)}\bm{\eta}^{(l)}]\big]\,, \notag\\
  M_{3,n} =& -\frac{h}{\beta}
  \sum_{l=0}^{n-1}
  D\psi^{(l)} \big[D^2\Theta^{(l)}[\sigma^{(l)}\bm{\eta}^{(l)},\sigma^{(l)}\bm{\eta}^{(l)}]-
    a^{(l)}:\nabla^2 \Theta^{(l)}\big]\,,\notag\\
     M_{4,n} = &-
  \sqrt{2\beta^{-1}} h^{\frac{3}{2}} \sum_{l=0}^{n-1}
  D^2\psi^{(l)}\big[P^{(l)}\bm{b}^{(l)}, (P\sigma)^{(l)} \bm{\eta}^{(l)}\big]\,, \label{m-terms-submanifold}\\
    M_{5,n} =&
  -\frac{h}{\beta} \sum_{l=0}^{n-1} \Big(D^2\psi^{(l)}\big[(P\sigma)^{(l)}\bm{\eta}^{(l)},
  (P\sigma)^{(l)}\bm{\eta}^{(l)}\big] - (Pa)^{(l)}:D^2\psi^{(l)}\Big)\,, \notag
\end{align}
and
\begin{align}
  S_{1,n} =& -\frac{h^2}{2} \sum_{l=0}^{n-1}
  D\psi^{(l)}\big[D^2\Theta^{(l)}[\bm{b}^{(l)},\bm{b}^{(l)}]\big] \,, \notag \\
    S_{2,n} =& -\frac{h^2}{2} \sum_{l=0}^{n-1}
  D^2\psi^{(l)}\big[P^{(l)}\bm{b}^{(l)}, P^{(l)}\bm{b}^{(l)}\big]\,,
  \label{s-terms-submanifold}\\
  S_{3,n} =& -\sum_{l=0}^{n-1} R^{(l)}\,, \qquad
  S_{4,n} = -\frac{1}{6}
  \sum_{l=0}^{n-1}
  D^3(\psi\circ\Theta)^{(l)}\big[\bm{\delta}^{(l)},\bm{\delta}^{(l)},\bm{\delta}^{(l)}\big]\,.
  \notag
\end{align}
Using (\ref{delta-vector-by-b}),  the last term $S_{4,n}$ above 
can be further decomposed as
\begin{align*}
  S_{4,n} = M_{0,n} + S_{0,n}\,,
\end{align*}
where 
\begin{align}
  \begin{split}
  M_{0,n} =& -\frac{\sqrt{2\beta^{-1}}}{6} h^{\frac{3}{2}} 
    \sum_{l=0}^{n-1} \bigg(\frac{2}{\beta}
    D^3(\psi\circ\Theta)^{(l)}\big[\sigma^{(l)}\bm{\eta}^{(l)},\sigma^{(l)}\bm{\eta}^{(l)},\sigma^{(l)}\bm{\eta}^{(l)}\big]+ 
    3 h D^3(\psi\circ\Theta)^{(l)}\big[\bm{b}^{(l)},\bm{b}^{(l)},
    \sigma^{(l)}\bm{\eta}^{(l)}\big]\bigg)\,, \\
    S_{0,n} =& - \frac{h^2}{6} \sum_{l=0}^{n-1} \bigg(\frac{6}{\beta}
    D^3(\psi\circ\Theta)^{(l)}\big[\bm{b}^{(l)},
    \sigma^{(l)}\bm{\eta}^{(l)},\sigma^{(l)}\bm{\eta}^{(l)}\big] + hD^3(\psi\circ\Theta)^{(l)}\big[
    \bm{b}^{(l)},\bm{b}^{(l)},\bm{b}^{(l)}\big]\bigg)\,.
  \end{split}
  \label{s4-m0-s0-submanifold}
\end{align}
Notice that the terms $M_{i,n}$, $0 \le i \le 5$, are all martingales and in
particular we have $\mathbf{E} M_{i,n} = 0$. Therefore, since the level set
$\Sigma$ is compact (Assumption~\ref{assump-2}), the first conclusion follows from the estimates 
\begin{align}
  \begin{split}
  &
   |S_{1,n}| \le C|D\psi|_{\infty}\,h\,T\,, \qquad |S_{2,n}| \le
   C|D^2\psi|_{\infty}\, h\,T\,,\\
   &
   \mathbf{E}|S_{0,n}| \le C|D^3\psi|_{\infty}\, h\,T \,,\qquad
     \mathbf{E}|S_{3,n}| \le C|D^4\psi|_{\infty}\,h\,T \,,
  \end{split}
  \label{m-s-bound-1}
\end{align}
while the second conclusion follows by squaring both sides of (\ref{diff-hatfn-f-by-psi}) and using
the estimates 
\begin{align}
  \begin{split}
  & \mathbf{E}|S_{0,n}|^2 \le C h^2 T^2 |D^3\psi|^2_{\infty}  \,, \qquad
    \mathbf{E}|S_{3,n}|^2 \le C h^2 T^2 |D^4\psi|^2_{\infty} \,,    \\
    & \mathbf{E} |M_{0,n}|^2 \le Ch^2 T|D^3\psi|^2_{\infty}\,, \qquad 
     \mathbf{E} |M_{1,n}|^2 \le CT|D\psi|^2_{\infty}\,,\qquad
     \mathbf{E} |M_{2,n}|^2 \le Ch^2 T|D\psi|^2_{\infty}\,,\\
     & \mathbf{E} |M_{3,n}|^2 \le ChT|D\psi|^2_{\infty}\,,  \qquad 
      \mathbf{E} |M_{4,n}|^2 \le Ch^2 T|D^2\psi|^2_{\infty} \,,\qquad 
      \mathbf{E} |M_{5,n}|^2 \le ChT|D^2\psi|^2_{\infty} \,.
  \end{split}
  \label{thm-estimate-square-mn-sn}
\end{align}

As far as the third conclusion (pathwise estimate) is concerned, notice that (\ref{diff-hatfn-f-by-psi}) implies
\begin{align}
  \begin{split}
\big| \widehat{f}_n - \overline{f} \big|
  \le &
  \frac{|\psi^{(n)}-\psi^{(0)}|}{T} + \frac{1}{T}
  \sum_{i=0}^5 |M_{i,n}| + \frac{1}{T} \sum_{i=0}^3 |S_{i,n}| \\
  \le &
  C\Big(h + \frac{1}{T}\Big) + \frac{1}{T} \sum_{i=0}^5 |M_{i,n}|\,, 
\end{split}
\label{diff-hatfn-f-by-psi-pathwise}
\end{align}
where we have used the estimates (\ref{m-s-bound-1}) for $|S_{1,n}|$, $|S_{2,n}|$, and the upper bounds
\begin{align*}
  \begin{split}
  |S_{0,n}|\le& Ch^2 \sum_{l=0}^{n-1} |\bm{\eta}^{(l)}|^2 + Ch^3 n \le ChT\,,
    \quad a.s. \\
  |S_{3,n}|\le& Ch^2 \sum_{l=0}^{n-1} |\bm{\eta}^{(l)}|^4 + Ch^4 n \le ChT\,,
    \quad a.s.
  \end{split}
\end{align*}
which are implied by the strong law of large numbers for $\frac{1}{n}
\sum\limits_{l=0}^{n-1} |\bm{\eta}^{(l)}|^4$, when $n \rightarrow +\infty$.
Finally, we estimate the martingale terms $M_{i,n}$ in (\ref{diff-hatfn-f-by-psi-pathwise}). Notice that, for any $r \ge 1$, we can deduce the following upper bounds
(see \cite{conv-time-averaging})
\begin{align*}
  \begin{split}
    & \frac{1}{T^{2r}} \mathbf{E}|M_{1,n}|^{2r} \le \frac{C}{T^r}\,,\quad
    \frac{1}{T^{2r}} \mathbf{E}|M_{2,n}|^{2r} \le \frac{Ch^{2r}}{T^r}\,, \\
    & \frac{1}{T^{2r}} \mathbf{E}|M_{3,n}|^{2r} \le \frac{Ch^r}{T^r}\,,\quad
    \frac{1}{T^{2r}} \mathbf{E}|M_{4,n}|^{2r} \le \frac{Ch^{2r}}{T^r}\,, \\
    & \frac{1}{T^{2r}} \mathbf{E}|M_{5,n}|^{2r} \le \frac{Ch^{r}}{T^r}\,,\quad
    \frac{1}{T^{2r}} \mathbf{E}|M_{0,n}|^{2r} \le \frac{Ch^{2r}}{T^r}\,,
  \end{split}
\end{align*}
which give 
\begin{align}
  \mathbf{E} 
  \Big(\frac{1}{T} \sum_{i=0}^5 |M_{i,n}|\Big)^{2r} \le 
  \frac{C}{T^{2r}} \sum_{i=0}^5 \mathbf{E}|M_{i,n}|^{2r} \le \frac{C}{T^r}\,.
  \label{bound-sum-of-mi-2r}
\end{align}
Now, for any $0 < \epsilon < \frac{1}{2}$, the Borel-Cantelli lemma 
implies that there is an almost surely bounded random variable $\zeta(\omega)$, such that
\begin{align}
  \frac{1}{T} \sum_{i=0}^5 |M_{i,n}| \le \frac{\zeta(\omega)}{T^{\frac{1}{2} -
\epsilon}}\,.
\label{sum-of-mi-pathwise}
\end{align}
Therefore, the third conclusion follows readily from 
(\ref{diff-hatfn-f-by-psi-pathwise}) and (\ref{sum-of-mi-pathwise}).
\end{proof}
Next, we prove Corollary~\ref{corollary-on-spectral-gap}.
\begin{proof}[Proof of Corollary~\ref{corollary-on-spectral-gap}]
  From the estimates in (\ref{thm-estimate-square-mn-sn}), we know that it is only
  necessary to consider the term $M_{1,n}$ in (\ref{m-terms-submanifold}).
  Recall that $\psi$ solves the Poisson equation (\ref{poisson-eqn}) and we
  can assume $\int_{\Sigma} \psi\,d\mu_1 = 0$ without loss of generosity.
  Applying the Poisson equation, the Poincar{\'e} inequality, and the
  Cauchy-Schwarz inequality, we have the standard estimates
    \begin{align*}
    \int_{\Sigma} \psi^2 d\mu_1 \le& -\frac{1}{K} \int_{\Sigma}
    (\mathcal{L}\psi) \psi\,d\mu_1 \\
    \le& \frac{1}{K} \Big[\int_{\Sigma} (\mathcal{L}\psi)^2
    d\mu_1\Big]^{\frac{1}{2}} \Big( \int_{\Sigma}
    \psi^2\,d\mu_1\Big)^{\frac{1}{2}} \\
    =& \frac{1}{K} \Big[\int_{\Sigma} (f-\overline{f})^2 d\mu_1\Big]^{\frac{1}{2}} \Big( \int_{\Sigma}
    \psi^2\,d\mu_1\Big)^{\frac{1}{2}}\,, 
  \end{align*}
  which implies
  \begin{align}
    \begin{split}
        \Big(\int_{\Sigma} \psi^2\,d\mu_1\Big)^{\frac{1}{2}} \le \frac{1}{K} 
\Big[\int_{\Sigma} (f-\overline{f}\,)^2\,d\mu_1\Big]^{\frac{1}{2}} \,,
      \quad \mbox{and}\hspace{0.2cm}
- \int_{\Sigma} (\mathcal{L}\psi) \psi\,d\mu_1 
\le \frac{1}{K} \int_{\Sigma} (f-\overline{f}\,)^2\,d\mu_1\,.
    \end{split}
    \label{estimate-followed-from-poincare}
  \end{align}
  Since the term $M_{1,n}$ in (\ref{m-terms-submanifold}) is a martingale, we
  have 
  \begin{align*}
    \frac{1}{T^2}\mathbf{E} |M_{1,n}|^2 = \frac{2\beta^{-1}}{T}
    \frac{1}{n} \sum_{l=0}^{n-1}
    \mathbf{E} \Big[\big((Pa)^{(l)}\nabla \psi^{(l)}\big)
    \cdot \nabla \psi^{(l)}\Big]\,.
  \end{align*}
  Applying the first estimate in the conclusion of
  Theorem~\ref{thm-estimate-scheme-on-submanifold}, using
  (\ref{integration-by-part-mu1}) in Remark~\ref{rmk-3}, as well as the
  estimate (\ref{estimate-followed-from-poincare}), we obtain
  \begin{align*}
    \frac{1}{T^2}\mathbf{E} |M_{1,n}|^2 \le& \frac{2\beta^{-1}}{T}\int_{\Sigma} (Pa\nabla \psi)\cdot
    \nabla \psi\, d\mu_1 + C\Big(\frac{h}{T}+\frac{1}{T^2}\Big)\\
    \le & \frac{2\int_{\Sigma} (f-\overline{f}\,)^2\,d\mu_1}{KT} 
    + C\Big(\frac{h}{T}+\frac{1}{T^2}\Big)\,.
  \end{align*}
  The conclusion follows by squaring both sides of
  (\ref{diff-hatfn-f-by-psi}), applying Young's inequality, and using the same argument of Theorem~\ref{thm-estimate-scheme-on-submanifold}.
\end{proof}

Finally, we prove Proposition~\ref{prop-map-pi}, which concerns the properties of the projection map $\Pi$ defined in (\ref{projection-map-pi}).

    \begin{proof}[Proof of Proposition~\ref{prop-map-pi}]
  For $1 \le l \le d$, recall that
  $\bm{p}_{l} = (P\sigma)_{i'l}\bm{e}_{i'}$
  is the tangent vector field defined in Appendix~\ref{app-sec-manifold}
  such that 
  $ \bm{p}_{l} \in T_{x}\Sigma$ at each $x \in \Sigma$.
  Since $\Pi_i(x) = x_i$ for
  $x \in \Sigma$, $1 \le i \le d$, taking derivatives along $\bm{p}_{l}$
  twice, we obtain
  \begin{align}
    \begin{split}
      &\frac{\partial \Pi_i}{\partial x_j} (P\sigma)_{jl}  = (P\sigma)_{il} \,,
    \\
    &\frac{\partial^2 \Pi_i}{\partial x_j\partial x_r} 
    (P\sigma)_{jl}(P\sigma)_{rl} = (P\sigma)_{rl} \frac{\partial
      (P\sigma)_{il}}{\partial x_r} - \frac{\partial \Pi_i}{\partial x_j} 
      (P\sigma)_{rl} \frac{\partial (P\sigma)_{jl}}{\partial x_r} \,.
    \end{split}
 \label{eqn-lemma-pi-0}
  \end{align}
Notice that, for a function which only depends on the state and is evaluated at $x \in \Sigma$,
      we will often omit its argument in order to keep the notations simple.

 On the other hand, the vector
 $\bm{\sigma}_{l} - \bm{p}_{l} = ((I-P)\sigma)_{i'l}\bm{e}_{i'} \in (T_x\Sigma)^{\perp}$ (the complement
 of the subspace $T_x\Sigma$ in $T_x\mathcal{M}$). Let
 $\phi(s)$ be the geodesic curve in $\mathcal{M}$ such that $\phi(0) = x$ and $\phi'(0) = 
 \bm{\sigma}_{l} - \bm{p}_{l}$.
 We have $\Pi_i(\phi(s)) = x_i$, $\forall s \in [0, \epsilon)$ for some
   $\epsilon > 0$. Taking derivatives with respect to $s$ twice, we obtain
   \begin{align*}
     \begin{split}
     &\frac{\partial \Pi_i}{\partial x_j}(\phi(s))\frac{d\phi_j(s)}{ds} =
     0\,, \\
     &\frac{\partial^2 \Pi_i}{\partial x_j\partial x_r}(\phi(s))
     \frac{d\phi_j(s)}{ds}\frac{d\phi_r(s)}{ds}=-\frac{\partial
     \Pi_i}{\partial x_j}(\phi(s)) \frac{d^2\phi_j(s)}{ds^2} 
     =\frac{\partial
     \Pi_i}{\partial x_j}(\phi(s))\,\Gamma^j_{rr'}(\phi(s)) \frac{d\phi_r(s)}{ds}
     \frac{d\phi_{r'}(s)}{ds}\,,
   \end{split}
   \end{align*}
   for $1 \le i \le d$, where $\phi_j$ denotes the $j$th component of $\phi$, and
   the geodesic equation of the curve $\phi$ has been used to obtain the last expression above. In
   particular, setting $s=0$, we obtain 
   \begin{align}
     \begin{split}
     &\frac{\partial \Pi_i}{\partial x_j}
     \big(\sigma_{jl} - (P\sigma)_{jl}\big) = 0\,, \\
     &
     \frac{\partial^2 \Pi_i}{\partial x_j\partial x_r} 
     \big(\sigma_{jl} - (P\sigma)_{jl}\big)
\big(\sigma_{rl} - (P\sigma)_{rl}\big)
      =\frac{\partial \Pi_i}{\partial x_j} \Gamma^j_{rr'}
\big(\sigma_{rl} - (P\sigma)_{rl}\big)
     \big(\sigma_{r'l} - (P\sigma)_{r'l}\big)\,.
   \end{split}
     \label{eqn-lemma-pi-1}
   \end{align}
   Combining the first equations in both (\ref{eqn-lemma-pi-0}) and
   (\ref{eqn-lemma-pi-1}), we can conclude that $\frac{\partial \Pi_i}{\partial x_j}
   = P_{ij}$ at $x \in \Sigma$. 
   Since (\ref{eqn-lemma-pi-1}) holds at any $x \in \Sigma$, taking
   the derivative in the first equation of (\ref{eqn-lemma-pi-1}) along the tangent vector 
   $\bm{p}_{l}\in T_x\Sigma$, we obtain 
   \begin{align}
     \frac{\partial^2 \Pi_i}{\partial x_j\partial x_r}
\big(\sigma_{jl} - (P\sigma)_{jl}\big) (P\sigma)_{rl} = - 
     \frac{\partial \Pi_i}{\partial x_j} 
     (P\sigma)_{rl} 
     \frac{\partial \big(\sigma_{jl} - (P\sigma)_{jl}\big) }{\partial x_r}\,.
     \label{eqn-lemma-pi-2}
   \end{align}
   Combining (\ref{eqn-lemma-pi-0}), (\ref{eqn-lemma-pi-1}) and (\ref{eqn-lemma-pi-2}),
   using Lemma~\ref{lemma-identity-in-i12} in Appendix~\ref{app-sec-manifold},
   the expression in (\ref{gamma-det-exp}),
   the relations
   \begin{align*}
     &(P\sigma)_{jl} (P\sigma)_{rl} = (PaP^T)_{jr} = (Pa)_{jr}\,, \\
     &\big(\sigma_{rl} - (P\sigma)_{rl}\big) \big(\sigma_{r'l} -
(P\sigma)_{r'l}\big)= a_{rr'} - (Pa)_{rr'} \,,
   \end{align*}
   and the integration by parts formula, we can compute
   \begin{align*}
     &\frac{\partial^2 \Pi_i}{\partial x_j\partial x_r} a_{jr}  \\
     &\frac{\partial^2 \Pi_i}{\partial x_j\partial x_r} 
     \big(P\sigma + (\sigma - P\sigma)\big)_{jl}\big(P\sigma + (\sigma - P\sigma)\big)_{rl}
       \\
     = &
     \frac{\partial^2 \Pi_i}{\partial x_j\partial x_r} 
     (P\sigma)_{jl} (P\sigma)_{rl}
     + 
     2 \frac{\partial^2 \Pi_i}{\partial x_j\partial x_r} 
     (\sigma - P\sigma)_{jl} (P\sigma)_{rl}
     + \frac{\partial^2 \Pi_i}{\partial x_j\partial x_r}
(\sigma - P\sigma)_{jl}(\sigma - P\sigma)_{rl} \\
     =& 
(P\sigma)_{rl} \frac{\partial (P\sigma)_{il}}{\partial x_r} - P_{ij} 
      (P\sigma)_{rl} \frac{\partial (P\sigma)_{jl}}{\partial x_r} 
      - 2P_{ij} (P\sigma)_{rl} 
     \frac{\partial \big(\sigma_{jl} - (P\sigma)_{jl}\big) }{\partial x_r}\\
     &
     + P_{ij}\Gamma^j_{rr'} \big(\sigma_{rl} - (P\sigma)_{rl}\big) \big(\sigma_{r'l} - (P\sigma)_{r'l}\big) \\
     =& 
     \bigg[(P\sigma)_{rl} \frac{\partial (P\sigma)_{il}}{\partial x_r} + P_{ij} 
      (P\sigma)_{rl} \frac{\partial (P\sigma)_{jl}}{\partial x_r} 
      - 2P_{ij} (P\sigma)_{rl} \frac{\partial \sigma_{jl}}{\partial x_r}\bigg] 
      + P_{ij}\Gamma^j_{rr'} \big(a_{rr'} - (Pa)_{rr'}\big)\\
     =& 
     \bigg[2(P\sigma)_{rl} \frac{\partial (P\sigma)_{il}}{\partial x_r} -  
(P\sigma)_{jl}
      (P\sigma)_{rl} \frac{\partial P_{ij}}{\partial x_r} 
      - 2 (P\sigma)_{rl} \frac{\partial (P\sigma)_{il}}{\partial x_r} 
      + 2 (P\sigma)_{rl} \sigma_{jl} \frac{\partial P_{ij}}{\partial x_r}\bigg] \\
    &+ (Pa)_{ij}\frac{\partial (a^{-1})_{lj}}{\partial x_r}
    \big(a_{lr}-(Pa)_{lr}\big)
      -\frac{1}{2} (Pa)_{ij} \frac{\partial (a^{-1})_{lr}}{\partial x_j}
      \big(a_{lr}-(Pa)_{lr}\big)\\
     =& 
  (Pa)_{lj} \frac{\partial P_{ij}}{\partial x_l} 
+\bigg[ - P_{ij} \frac{\partial a_{jl}}{\partial x_l}
       - (Pa)_{ij}\frac{\partial P_{lj}}{\partial x_l}
      + P_{il}\frac{\partial (Pa)_{lj}}{\partial x_j}\bigg]
      +(Pa)_{ij}\frac{\partial P_{lj}}{\partial x_l} + 
	\frac{1}{2} (Pa)_{ij} \frac{\partial \ln \mbox{det}\Psi}{\partial
	x_j}\\
     =& 
 - P_{ij} \frac{\partial a_{jl}}{\partial x_l}
      + \frac{\partial (Pa)_{ij}}{\partial x_j}
      +
	\frac{1}{2} (Pa)_{ij} \frac{\partial \ln \mbox{det}\Psi}{\partial x_j}\,.
   \end{align*}
 \end{proof}

\renewcommand{\bibsection}{\subsection*{\large References}}  
\bibliographystyle{plainnat}
\bibliography{subref}

\end{document}